\documentclass[reqno,11pt]{amsart}
\usepackage[en-GB]{datetime2}
\usepackage{graphicx}
\usepackage{amssymb}
\usepackage{caption}
\usepackage{enumitem}
\setlist[itemize]{leftmargin=2.5em}
\setlist[enumerate]{leftmargin=2.5em,label=\upshape(\arabic*)}
\usepackage{comment}
\usepackage{thmtools}
\usepackage{zref-clever}
\zcsetup{cap=true,nameinlink=false}
\zcsetup{countertype={subsection    = subsection}}
\zcRefTypeSetup{subsection}{Name-sg=Subsection}
\newenvironment{clm}[1]
{\renewcommand\theclaim{#1}\claim}
{\endclaim}
\usepackage{setspace}
\usepackage[T1]{fontenc}
\usepackage[charter,cal=cmcal]{mathdesign}	
\usepackage[nomath]{stix2}				
\usepackage{etoolbox}
\makeatletter
\patchcmd{\section}{\scshape}{\fontfamily{stix2}\fontshape{sc}\selectfont}{}{}			
\patchcmd{\abstract}{\scshape}{\fontfamily{stix2}\fontshape{sc}\selectfont}{}{}			
\patchcmd{\@setaddresses}{\scshape}{\fontfamily{stix2}\fontshape{sc}\selectfont}{}{}			
\makeatother

\usepackage{tikz}
\usetikzlibrary{matrix,intersections,calc,shapes.geometric,patterns}
\tikzset{ 
	table/.style={
		matrix of nodes,
		nodes={rectangle,text width=1.75em,align=center},
		text depth=1.25ex,
		text height=2.5ex,
		nodes in empty cells
	}
}
\usepackage{mathtools}
\usepackage{amsfonts}
\usepackage{cite}

\usepackage[pagebackref
]{hyperref}
\hypersetup{
	colorlinks=true,
	allcolors=blue,
	pdftitle={Ramsey-type χ-bounds for  χ-bounded graph classes},
	pdfauthor={Tung Nguyen and Sang-il Oum}
}
\usepackage{geometry}
\newtheorem{theorem}{Theorem}[section]
\newtheorem{lemma}[theorem]{Lemma}
\newtheorem{claim}{Claim}[theorem]
\newtheorem{corollary}[theorem]{Corollary}
\newtheorem{conjecture}[theorem]{Conjecture}
\newtheorem{proposition}[theorem]{Proposition}
\zcRefTypeSetup{claim}{
	Name-sg = Claim ,
	name-sg = claim ,
	Name-pl = Claims ,
	name-pl = claims ,
}
\zcRefTypeSetup{conjecture}{
	Name-sg = Conjecture ,
	name-sg = conjecture ,
	Name-pl = Conjectures ,
	name-pl = Conjectures ,
}
\expandafter\let\expandafter\oldproof\csname\string\proof\endcsname
\let\oldendproof\endproof
\renewenvironment{proof}[1][\proofname]{%
	\oldproof[\normalfont\bfseries #1]%
}{\oldendproof}
\newenvironment{subproof}[1][\normalfont\it Subproof]{%
	\begin{proof}[#1]%
	}{%
	\end{proof}%
}

\newcommand{\mac}{\mathcal}
\newcommand{\mab}{\mathbb}

\newcommand{\eps}{\varepsilon}

\newcommand{\de}{\operatorname{d}}
\renewcommand{\subset}{\subseteq}

\DeclarePairedDelimiter\abs{\lvert}{\rvert}%
\DeclarePairedDelimiter\ceil{\lceil}{\rceil}%
\DeclarePairedDelimiter\floor{\lfloor}{\rfloor}%
\makeatletter
\newcommand{\leqnomode}{\tagsleft@true}
\newcommand{\reqnomode}{\tagsleft@false}
\makeatother

\begin{document}
	\title{
		Ramsey-type $\chi$-bounds for $\chi$-bounded graph classes
	}
	\author{Tung H. Nguyen}
	\address{Mathematical Institute and Christ Church, University of Oxford, Oxford, UK}
	\email{\href{mailto:nguyent@maths.ox.ac.uk}{nguyent@maths.ox.ac.uk}
	}
	\author{Sang-il Oum}
	\address{Discrete Mathematics Group, Institute for Basic Science (IBS), Daejeon, South Korea}
	\address{Department of Mathematical Sciences, KAIST, Daejeon, South Korea}
	\email{\href{mailto:sangil@ibs.re.kr}{sangil@ibs.re.kr}}
	\thanks{The first author was supported by a Titchmarsh Research Fellowship and a Christ Church Research Centre Grant. The second author was supported by the Institute for Basic Science (IBS-R029-C1).}
	\subjclass{05C15, 05C35, 05C55, 05C69, 05C75}
	
	\begin{abstract}
		We prove that for every path $P$, 
		the class of graphs with no induced $P$
		and no induced four-cycle $C_4$ is linearly $\chi$-bounded. 
		More generally, we ask 
		for which pairs $\{T,H\}$ where $T$ is a forest and $H$ is a complete multipartite graph,
		every graph $G$ with no induced $T$ and no induced $H$ 
		has chromatic number at most $C \cdot R(\alpha(H),\omega(G)+1)$
		for some constant $C$ depending only on $T$ and $H$,
		where $R(\cdot,\cdot)$ denotes the usual Ramsey numbers.
		We show that this holds in the following two instances, which strengthen the case $T=P$ and $H=C_4$ mentioned above:
		\begin{enumerate}
			\item every component of $T$ is a broom and $H$ is complete multipartite; or
			
			\item $T$ is a forest and $H$ is complete bipartite.
		\end{enumerate} 
		
		These two unify and substantially extend a number of previous results 
		on linear and polynomial $\chi$-boundedness for various graph classes.
		
		For case (2), we also provide a new proof (with better bounds) of a recent result of Fox, Nenadov, and Pham on the existence of an induced copy of a fixed tree in a graph satisfying certain sparsity conditions.
	\end{abstract}
	\maketitle
	\section{Introduction}
	\label{sec:intro}

	Graphs in this paper are finite and simple. For a graph $G$, we let $\chi(G)$, $\alpha(G)$, and $\omega(G)$ denote its chromatic number, its stability (or independence) number, and its clique number, respectively.
	A graph $H$ is an \emph{induced subgraph} of $G$ if it is obtained from $G$ by removing vertices. We say that $G$ is \emph{$H$-free} if $G$ has no induced subgraph isomorphic to $H$. 
	For a set $\mac H$ of graphs, $G$ is \emph{$\mac H$-free} if $G$ is $H$-free for all $H\in\mac H$; and the \emph{$\mac H$-free class} is the class of all $\mac H$-free graphs.
	A graph class is \emph{hereditary} if it is closed under taking induced subgraphs. A hereditary class $\mac G$ is \emph{$\chi$-bounded} if there is a function $f\colon \mab N\to\mab R_{\ge0}$ such that $\chi(G)\le f(\omega(G))$ for all~$G\in\mac G$; and such a function~$f$ is a \emph{$\chi$-binding} function for~$\mac G$. The notion of $\chi$-boundedness was first introduced by Gy\'arf\'as~\cite{Gyarfas1975}; see~\cite{SS2018,Scott2023} for surveys of the area.
	
	For a $\chi$-bounded class $\mac G$, the \emph{optimal} $\chi$-binding function $f_{\mac G}$ is defined by 
	\[ f_{\mac G}(n):=\max (\chi(G):G\in\mac G,~\omega(G)=n).\] We say that $\mac G$ is \emph{polynomially $\chi$-bounded} if it admits a polynomial $\chi$-binding function, and \emph{linearly $\chi$-bounded} if it admits a linear $\chi$-binding function. In other words, $\mac G$ is linearly $\chi$-bounded if there exists $C=C(\mac G)\ge1$ such that $\chi(G)\le C\cdot\omega(G)$ for all $G\in\mac G$, or equivalently if its optimal $\chi$-binding function has linear growth.
	
	For an integer $k\ge1$, let $P_k$ denote the $k$-vertex path; and for $\eps>0$, we say that a graph $G$ is \emph{$(\eps,\chi)$-dense} if the set of nonneighbours of every vertex in $G$ induces a subgraph of $G$ with chromatic number less than $\eps\cdot\chi(G)$. The notion of $(\eps,\chi)$-dense graphs plays a central role in the chromatic density framework behind the first author's recent proof that the $P_5$-free class is polynomially $\chi$-bounded~\cite{Nguyen2025}.	
	We say that a hereditary class $\mac G$ is \emph{$\chi$-dense} if for every $\eps>0$, there exists $\delta=\delta(\mac G,\eps)>0$ such that every $G\in\mac G$ contains an $(\eps,\chi)$-dense induced subgraph $F$ with $\chi(F)\ge\delta\cdot\chi(G)$.
	In~\cite{Nguyen2025}, it is noted that $\chi$-dense classes are $\chi$-bounded with exponential $\chi$-binding functions; and so in general there exist $\chi$-bounded classes that are not $\chi$-dense, via constructions of Bria\'nski, Davies, and Walczak~\cite{BDW2024}.
	So, which $\chi$-bounded classes are $\chi$-dense?
	As a natural example, all linearly $\chi$-bounded classes are $\chi$-dense, because the complete graphs are clearly $(\eps,\chi)$-dense for any $\eps>0$.
	On the other hand, it is observed in~\cite{Nguyen2025} that the $C_4$-free subclass of every $\chi$-dense class is linearly $\chi$-bounded; and so the following more general phenomenon could be true:
	
	\begin{conjecture}[Nguyen~{\cite{Nguyen2025}}]
		\label{conj:c4chi}
		The $C_4$-free subclass of every $\chi$-bounded class is linearly $\chi$-bounded.
	\end{conjecture}
	
	In other words, \zcref{conj:c4chi} says that every hereditary $\chi$-bounded class not containing $C_4$ is linearly $\chi$-bounded.
	This is considerably stronger than a conjecture of Chudnovsky, Cook, Davies, and the second author~\cite[p. 70]{CCDO2023} that such classes are polynomially $\chi$-bounded.
	Chen and Xu~\cite{CX2026} recently obtained a weakening of \zcref{conj:c4chi}, that the $\{C_4,\text{bull}\}$-free and 
	$\{C_4,\text{hammer}\}$-free subclasses of every $\chi$-bounded class are linearly $\chi$-bounded.
	There are several well-known linear $\chi$-boundedness results supporting \zcref{conj:c4chi}; for example:
	\begin{itemize}
		\item Chudnovsky, Scott, and Seymour~\cite{CSS2015} proved that the class of graphs with no induced cycle of length at least $5$ is $\chi$-bounded. (They actually proved more, with $5$ replaced by any $\ell\ge3$.)
		Its $C_4$-free subclass is the class of chordal graphs, thus is a subclass of perfect graphs, and so is $\chi$-bounded with the identity $\chi$-binding function.
		
		\item Scott and Seymour~\cite{SS2014} proved that the class of graphs with no odd induced cycle is $\chi$-bounded. By a result of Conforti, Cournejols, and Vu{\v s}kovi\'c~\cite{CCV2004} (or more generally, by the strong perfect graph theorem~\cite{CRST2006}), every $C_4$-free graph with no odd induced cycle is perfect.
		
		\item Scott and Seymour~\cite{SS2015} also proved that the class of graphs with no induced cycle of length any specific residue is $\chi$-bounded; and so this property holds for the class of graphs with no induced cycle of length even and at least six. Chudnovsky and Seymour~\cite{CS2019} showed that its $C_4$-free subclass (also known as the class of \emph{even-hole-free} graphs) admits the linear $\chi$-binding function $x\mapsto 2x-1$.
	\end{itemize}
	
	Let us recall why the $C_4$-free subclass of every $\chi$-dense class is linearly $\chi$-bounded.
	Indeed, observe that every two nonadjacent vertices in a non-complete $(\frac13,\chi)$-dense graph $G$ have common neighbourhood inducing a subgraph of $G$ with chromatic number at least $\frac13\chi(G)$; and if $G$ is $C_4$-free then such a common neighbourhood is a clique.
	By the same logic, for each integer $m\ge1$, every $(\frac1{2m-1},\chi)$-dense and $\overline{mK_2}$-free graph $G$ satisfies $\omega(G)\ge\frac1{2m-1}\chi(G)$; and so the $\overline{mK_2}$-free subclass of every $\chi$-dense class is also linearly $\chi$-bounded.
	(Here, for a graph $H$, we use $\overline H$ to denote the complement graph of $H$ and $mH$ to denote the disjoint union of $m$ copies of $H$. Hence for every integer $s\ge1$, $\overline{mK_s}$ means the complete multipartite graph whose parts have size $s$.)
	Thus, perhaps the following generalisation of \zcref{conj:c4chi} holds as well:
	
	\begin{conjecture}
		\label{conj:cocktailchi}
		For every integer $m\ge1$, the $\overline{mK_2}$-free subclass of every $\chi$-bounded class is linearly $\chi$-bounded.
	\end{conjecture}
	
	For integers $s,w\ge1$, let $R(s,w)$ denote the standard Ramsey number, that is, the least integer $n\ge1$ such that every $n$-vertex graph has a stable set of size $s$ or a clique of size $w$. Every graph~$G$ with chromatic number at least $R(s,\omega(G)+1)$ necessarily contains a stable set of size $s$; and so it is not hard to see that if $G$ is in addition $(\eps,\chi)$-dense for some small $\eps>0$, then such a stable set has common neighbourhood inducing a subgraph of $G$ with chromatic number at least $(1-s\eps)\chi(G)$.
	Hence, for appropriate choices of $\eps$ depending on $m$ and $s$, the $\overline{mK_s}$-free subclass of every $\chi$-dense class $\mac G$ admits a $\chi$-binding function $x\mapsto C\cdot R(s,x+1)$, where $C=C(\mac G,m,s)\ge1$. 
	Again, such a general Ramsey-type extension could actually hold for all $\chi$-bounded classes, as follows.
	
	\begin{conjecture}
		\label{conj:cmpchi}
		For every $\chi$-bounded class $\mac G$ and every two integers $m,s\ge1$, there exists $C=C(\mac G,m,s)\ge1$ such that every $\overline{mK_s}$-free graph $G\in\mac G$ satisfies $\chi(G)\le C\cdot R(s,\omega(G)+1)$.
	\end{conjecture}
	
	Indeed, \zcref{conj:cocktailchi} is a special case of \zcref{conj:cmpchi} with $s=2$.
	
	A classical example of $\chi$-bounded classes is the $P_k$-free class for each $k\ge1$, as shown by Gy\'arf\'as~\cite{Gyarfas1987}. Let $f_k(x)$ denote the optimal $\chi$-binding function for the $P_k$-free class; then it is known~that
	\[\frac{R(\ceil{k/2},x+1)-1}{\ceil{k/2}-1}\le f_k(x)\le (k-2)^{x-1}\quad\text{for all $k\ge 4$}\]
	where the lower bound was observed by Gy\'arf\'as~\cite{Gyarfas1987} and the upper bound was due to Gravier, Ho\`ang, and Maffray~\cite{GHM2003}.
	In particular, $P_5$-free graphs are not linearly $\chi$-bounded because $R(3,w)\ge \Omega(w^2/\ln w)$ due to Kim~\cite{Kim1995}.
	It remains open whether the $P_k$-free  class is polynomially $\chi$-bounded for every $k\ge6$; and the first author~\cite{Nguyen2025} very recently verified this property for the $P_5$-free class.
	Provided these results and observations, it would be natural to see if the $P_k$-free classes are $\chi$-dense.
	However, we have been unable to decide this even for the $P_5$-free class; and so perhaps a more approachable goal would be to verify \zcref{conj:c4chi,conj:cocktailchi,conj:cmpchi} for graphs excluding a path.
	Our first main result achieves this goal, as follows.
	
	\begin{theorem}
		\label{thm:cocktailks}
		For every three integers $m\ge1$, $s\ge 2$, $k\ge 3$, every $\{P_k,\overline{m K_s}\}$-free graph $G$ satisfies 
		\[\chi(G)< (2s^{k-3})^{m-1}(4s+1)\cdot R(s,\omega(G)+1).\]
	\end{theorem}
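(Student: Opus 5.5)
We argue by induction on $m$. The base case $m=1$ is the statement that every $\{P_k,\overline{K_s}\}$-free graph, that is, every graph with $\alpha(G)<s$, satisfies $\chi(G)<(4s+1)R(s,\omega(G)+1)$. This is immediate, because such a graph has fewer than $R(s,\omega(G)+1)$ vertices.

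For the induction step, set $N:=R(s,\omega(G)+1)$ and suppose $G$ is $\{P_k,\overline{mK_s}\}$-free with $\chi(G)\ge (2s^{k-3})^{m-1}(4s+1)N$. We may assume $G$ is connected. Since $\chi(G)\ge N$, some induced subgraph with at most $N$ vertices, for instance any subgraph with $N$ vertices, contains a stable set $S$ of size $s$. The key point is the following. Let $Z$ be the set of common neighbours of $S$. Then $G[Z]$ is $\overline{(m-1)K_s}$-free, since otherwise adding $S$ would give an induced $\overline{mK_s}$. By induction,
\[\chi(G[Z])<(2s^{k-3})^{m-2}(4s+1)N.\]
The aim is therefore to show that $\chi(G)$ is controlled by $2s^{k-3}$ times the maximum chromatic number of such common neighbourhoods, plus lower-order terms of size about $N$.

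To achieve this, we use a Gyárfás-path style argument adapted to stable sets. We grow an induced "path of stable sets" $S_1,S_2,\dots$ together with nested connected vertex sets $V_1\supseteq V_2\supseteq\cdots$ carrying most of the chromatic number, so that $S_{i+1}$ is anticomplete to $S_1,\dots,S_{i-1}$. We maintain that $V_{i+1}$ is a component of the non-neighbourhood of $S_1\cup\dots\cup S_i$, and that it attaches to the structure only through $S_i$. At each step the vertices of $V_i$ are partitioned by their adjacency pattern to the $s$ vertices of $S_i$; this partition is the source of the factor $s$ per step, over roughly $k-3$ steps, hence $s^{k-3}$.

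Each neighbourhood class is handled as follows. A vertex set complete to all of $S_i$ is colored using the inductive bound. A set that is only partially adjacent is either absorbed by choosing a new stable set inside it, or, if its stability number is less than $s$, costs at most $N$ colours. The non-neighbours of $S_i$ form the next region $V_{i+1}$. Since $G$ is $P_k$-free, the process must stop within $k-3$ steps: a component with large chromatic number at depth $k-2$ would yield an induced $P_k$ by choosing one vertex from each $S_i$, via the standard Gyárfás argument. The factor $2$ and the additive $4s+1$ come from summing the geometric losses and the $O(sN)$ colours spent on small sets at each level.

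The main obstacle is the induced-path step. The stable sets $S_i$ have $s$ vertices, and we must extract an induced path in which consecutive chosen vertices are adjacent and non-consecutive ones are not. This requires that each vertex of $V_{i+1}$ attaching to $S_i$ has a neighbour in $S_i$ that is in turn adjacent to something in $S_{i-1}$. My first attempt would be to choose $S_{i+1}$ inside a single neighbourhood class, namely the set of vertices whose neighbourhood in $S_i$ is exactly some fixed subset $T$. Then every vertex of $S_{i+1}$ sees the same subset of $S_i$, and paths can be routed consistently. Pigeonholing over the at most $2^s$ subsets must be avoided to get $s^{k-3}$ rather than $2^{s(k-3)}$. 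Instead we pigeonhole over a single chosen vertex of $S_i$ adjacent to the class, which gives the factor $s$.
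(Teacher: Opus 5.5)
There is a genuine gap: your outer induction matches the paper's, but the core lemma it reduces to is not proved, and the mechanism you sketch for it would not work.

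\textbf{The outer induction.} Your induction on $m$ is the paper's. The common neighbourhood of a stable $s$-set is $\overline{(m-1)K_s}$-free, so the theorem reduces to the following: a $P_k$-free graph with $\chi(G)\ge s^{k-3}(2q+O(sN))$ has a stable $s$-set whose common neighbourhood has chromatic number greater than $q$ (\zcref{thm:path2}).

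Even this reduction needs care. Inducting on the stated bound itself does not close. From $q<a^{m-2}(4s+1)N$ with $a=2s^{k-3}$, one only gets $\chi(G)<a^{m-1}(4s+1)N+\tfrac72as(N-1)$. The paper instead inducts on the stronger inequality $\chi(G)+c\le a^{m-1}(N-1+c)$ with $c=\frac{7as(N-1)}{2(a-1)}$, so that the additive losses sum geometrically. This part is easily repaired.

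\textbf{The core lemma: your recursion does not carry the chromatic number.}
\begin{itemize}
\item You recurse into the common non-neighbourhood of $S_i$. That set can be empty (e.g.\ if $S_i$ is a maximal stable set) while the partially adjacent vertices carry all of $\chi(V_i)$.
\item ``Absorbing'' a partially adjacent class by choosing a new stable set inside it is not a colouring bound. It is a branching recursion into up to $2^s-2$ classes. The new set lies in the neighbourhood of $S_i$, not its non-neighbourhood, which breaks your invariant, and the branching breaks the count.
\item The standard Gy\'arf\'as recursion $\chi(C)\le\chi(N(v_p)\cap C)+\chi(D)$ needs the neighbourhood of a single vertex to be controlled. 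Here only common neighbourhoods of stable $s$-sets are controlled.
\end{itemize}

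\textbf{What the paper does instead.} It grows a path of \emph{single} vertices $v_1\text-\cdots\text-v_p$ and tracks the non-neighbourhood, maintaining $\chi(J\setminus N_G(v_p))>f(p)$. The factor $s$ per step comes from the union bound $\chi(L\setminus\bigcap_{v\in S}N_G(v))\le\sum_{v\in S}\chi(L\setminus N_G(v))$. Let $Z$ be the set of vertices $z$ with $\chi(L\setminus N_G(z))\le b\approx f(p)/s$. Then $Z$ contains no stable $s$-set, so $\abs{Z}<R(s,\omega+1)$, and any vertex outside $Z$ can serve as the new path endpoint.

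\textbf{The second missing idea: reaching a vertex outside $Z$.} In a merely connected region, every vertex of $N(v_p)$ with a neighbour in the high-chromatic part may lie in $Z$, and then the path cannot be extended. The paper keeps $J$ $R(s,\omega+1)$-connected using \zcref{thm:kappachi}, which gives highly connected induced subgraphs losing only $O(sN)$ colours. Consequently a shortest path from $N_G(v_p)\cap V(J)$ to $L$ avoiding $Z$ exists. Connectivity also guarantees that the new endpoint still has at least $R(s,\omega+1)$ neighbours in the new $J'=L\setminus N_G(v_{p'})$. Your proposal has no substitute for this step, which you yourself flag as unresolved, so as it stands it does not establish the theorem.
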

	
	The proof method of \zcref{thm:cocktailks}, presented in \zcref{sec:stableset}, actually proves \zcref{conj:cmpchi} for the $(k,\ell)$-broom-free class for all $k\ge2$ and $\ell\ge1$.
	Here, the \emph{$(k,\ell)$-broom} is the graph obtained from the star $K_{1,\ell+1}$ by subdividing an edge exactly $k-2$ times (so the $(k,1)$-broom is $P_{k+1}$, see \zcref{fig:broom}); and as noted by Gy\'arf\'as~\cite{Gyarfas1987}, his proof that $P_k$-free graphs are $\chi$-bounded for all $k\ge1$ can be extended to reach the same conclusion for all $(k,\ell)$-broom-free classes.
	
	\begin{figure}
		\centering
		\begin{tikzpicture}
			\tikzstyle{v}=[circle, draw, solid, fill=black, inner sep=0pt, minimum width=3pt]
			\foreach \i in {0,1,2,3,4,5,6,7} {
				\node [v] (v\i) at (-0.5*\i,0) {};
			}
			\foreach \i in {1,2,3,4,5} {
				\node [v] (w\i) at (20*\i-60:1) {};
				\draw (v0)--(w\i);
			}
			\draw (v0)--(v7);
		\end{tikzpicture}
		\caption{The $(8,5)$-broom.}
		\label{fig:broom}
	\end{figure}
	
	\begin{theorem}
		\label{thm:cocktailbroomks}
		For every four integers $m\ge1$, $s\ge2$, $k\ge2$, and $\ell\ge1$, there exists $C=C(m,s,k,\ell)\ge1$ such that every $\{(k,\ell)\text{-broom},\overline{mK_s}\}$-free graph $G$ satisfies $\chi(G)\le C\cdot R(s,\omega(G)+1)$.
	\end{theorem}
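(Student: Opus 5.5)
We argue by induction on $m$. The base case $m=1$ is immediate: an $\overline{K_s}$-free graph has no stable set of size $s$, so $|V(G)|<R(s,\omega(G)+1)$ and hence $\chi(G)\le |V(G)|\le R(s,\omega(G)+1)$. For the inductive step, fix $m\ge2$ and suppose the statement holds for $m-1$ with constant $C'=C(m-1,s,k,\ell)$. Let $G$ be $\{(k,\ell)\text{-broom},\overline{mK_s}\}$-free. We may assume $G$ is connected and $\chi(G)> C\cdot R(s,\omega(G)+1)$, and we aim for a contradiction once $C$ is large in terms of $C',s,k,\ell$.

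\textbf{Key observation.} For every stable set $S$ of size $s$, the common neighbourhood $N(S):=\bigcap_{v\in S}N(v)$ induces an $\overline{(m-1)K_s}$-free graph. Otherwise $S$ together with a copy of $\overline{(m-1)K_s}$ in $N(S)$ would form an induced $\overline{mK_s}$. By induction, $\chi(G[N(S)])\le C'R(s,\omega(G)+1)$. So common neighbourhoods of stable $s$-sets have small chromatic number, and it remains to show that large $\chi$ together with $(k,\ell)$-broom-freeness forces some stable $s$-set whose common neighbourhood has chromatic number a constant fraction of $\chi(G)$.

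\textbf{Gy\'arf\'as path argument.} Run the standard Gy\'arf\'as argument adapted to brooms. Grow an induced path $v_1v_2\cdots v_j$ together with a connected set $D_j$ anticomplete to $\{v_1,\dots,v_{j-1}\}$, with $v_j$ having a neighbour in $D_j$ and $\chi(G[D_j])$ large. At each step, partition $N(v_j)\cap D_j$ and continue into the component of $D_j\setminus N[v_j]$ of largest chromatic number. The loss at each step is controlled by $\chi(G[N(v_j)\cap D_j])$. We never reach length $k-1$ with a remaining neighbourhood containing a stable set of size $\ell+1$ anticomplete to the path, as that would give a $(k,\ell)$-broom. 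Hence at some depth $j\le k-1$ the set $N(v_j)\cap D_j$ carries a constant fraction, roughly $s^{-(k-3)}$, of $\chi$. The factor $s^{k-3}$ in \zcref{thm:cocktailks} suggests the right accounting: at each step we split according to adjacency to $s$ chosen vertices.

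\textbf{Producing the stable set.} We then need a stable $s$-set $S$ inside a high-$\chi$ neighbourhood whose common neighbourhood still has large $\chi$. The plan is as follows. Inside $X=N(v_j)\cap D_j$ with $\chi(G[X])\ge \delta\chi(G)$, use $\chi(G[X])>R(s,\omega+1)$ to find a stable $S\subseteq X$ of size $s$. Then look at the vertices of $D_j$ beyond $X$: each either sees all of $S$ or misses some $v\in S$. Vertices missing some $v\in S$ but attached to $X$ would let us extend the path $v_1\cdots v_jv$ further. Using broom-freeness with $\ell$ leaves at the end, the set of vertices nonadjacent to some element of $S$ can be coloured with few colours relative to the next recursion level. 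Hence, unless $\chi(G[N(S)])\ge \chi(D_j)/(2s\cdot\text{const})$, we can recurse one level deeper with a loss factor of about $2s$.

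\textbf{Main obstacle.} Iterating this at most $k$ times yields either a broom or a stable $s$-set $S$ with $\chi(G[N(S)])\ge c(s,k,\ell)\chi(G)$. Combined with the key observation, this gives $\chi(G)\le c^{-1}C'R(s,\omega+1)$, so we may take $C=c^{-1}C'+O(s)$. The main obstacle is making the recursion step rigorous: guaranteeing that vertices non-complete to $S$ really continue an induced path, rather than creating chords back to earlier $v_i$. I would handle this by always working inside $D_j$, which is anticomplete to $v_1,\dots,v_{j-1}$. The $\ell$ leaves of the broom would come from the stable set $S$ itself, or from $\ell+1$ nonadjacent vertices found by Ramsey inside a neighbourhood of large $\chi$.
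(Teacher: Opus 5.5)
Your induction on $m$ and the key observation match the paper's outer argument. The paper iterates \zcref{thm:broomks}: a $(k,\ell)$-broom-free $G$ with $\chi(G)\ge 7s^k\ell^{s+1}(q+R(s,\omega(G)+1))$ has a stable $s$-set whose common neighbourhood has $\chi>q$. This is the same way \zcref{thm:cocktailks} is deduced from \zcref{thm:path2}. That lemma is the whole content, though, and your sketch of it has two genuine gaps.

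\textbf{Termination.} For $\ell\ge2$, a Gy\'arf\'as path that always enters the component of largest $\chi$ need not stop at depth $k$. Broom-freeness only says $N(v_k)\cap D_k$ has no stable set of size $\ell$, while $D_k$ itself may still have huge $\chi$ and the path can keep going, losing little per step. So ``hence some $N(v_j)\cap D_j$ carries a constant fraction of $\chi$'' does not follow. It does follow for $\ell=1$, where one neighbour of $v_k$ in $D_k$ already gives $P_{k+1}$. You need the invariant that the current endpoint has at least $B$ neighbours in the current piece. Turning those $B$ neighbours into $\ell$ pairwise nonadjacent leaves cannot be done by Ramsey when $\ell\ge s$, since that costs about $R(\ell,\omega+1)\gg R(s,\omega+1)$ vertices. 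The paper instead uses \zcref{lem:stablechi}: under the standing hypothesis, any $\ell\binom{\ell}{s}q+\ell^sR(s,\omega+1)$ vertices contain a stable set of size $\ell+1$.

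\textbf{Producing the stable set.} This is the more serious gap: the step fails as stated. Picking an arbitrary stable $S\subseteq X=N(v_j)\cap D_j$ and some $v\in S$ with high-$\chi$ non-neighbourhood gives no continuation of the path. The non-neighbours of $v$ inside $X$ are adjacent to $v_j$. The high-$\chi$ part of $D_j\setminus X$ may be attached to $X$ only through a few vertices, each of which sees almost all of it. So nothing bounds ``the vertices nonadjacent to some element of $S$'' by a next recursion level. The chords back to $v_1,\dots,v_{j-1}$ that you flag are the easy part.

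The paper overcomes exactly this obstacle with two ideas absent from your plan.
\begin{enumerate}
\item It uses \zcref{thm:kappachi} to work inside highly connected induced subgraphs $J$ and $L\subseteq J\setminus N(v_p)$ of linear chromatic number.
\item It chooses $S$ in the reverse order. Let $Z$ be the set of vertices whose non-neighbourhood in $L$ has $\chi\le b$. Any stable $s$-set inside $Z$ has common neighbourhood in $L$ of chromatic number at least $\chi(L)-sb>q$, so $\abs Z<R(s,\omega+1)$. Since $J$ is at least $R(s,\omega+1)$-connected, there is a shortest path from $N(v_p)\cap V(J)$ to $L$ avoiding $Z$.
\end{enumerate}
That path extends the induced path while keeping both invariants: at least $B$ neighbours in the new piece, and a non-neighbourhood of chromatic number greater than $f(p+1)$, losing a factor $s$ per step.
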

	
	In what follows, for every integer $m\ge1$, let $K_m^-$ be the graph obtained by removing an edge from the $m$-vertex complete graph $K_m$; then $K_m^-$ is an induced subgraph of $\overline{(m+1)K_2}$.
	We remark that \zcref{thm:cocktailks,thm:cocktailbroomks} unify and substantially extend (up to a constant factor) a number of previous results in the literature on linear $\chi$-boundedness and polynomial $\chi$-boundedness (see the surveys~\cite{SR2019,CK2025} for more results on these topics); for instance:
	\begin{itemize}
		
		\item 
		Fouquet, Giakoumakis, Maire, and Thuillier~\cite{FGMT1995} proved that the $\{P_5,C_4\}$-free class admits a linear $\chi$-binding function $x\mapsto 3x/2$.
		This was later improved by Brause, Gei\ss{}er, and Schiermeyer~\cite{BGS2022} to $x\mapsto \ceil{(5x-1)/4}$. 
		
		\item Char and Karthick~\cite{CK2025a} proved that every $\{P_5,K_5^-\}$-free graph~$G$ is $\max\{7,\omega(G)\}$-colourable.
		
		\item Char and Karthick~\cite{CK2022} proved that the $\{P_5,\text{$4$-wheel}\}$-free class admits a linear $\chi$-binding function $x\mapsto \frac32x$, improving the previous result of Choudum, Karthick, and Shalu~\cite{CKS2007b}. The $4$-wheel is a graph obtained from $C_4$ by adding an additional vertex adjacent to all vertices of the~$C_4$.

		\item Wu, Li, and Li~\cite{WLL2025} proved that the $\{P_3\cup P_2, \text{$4$-wheel}\}$-free class admits a linear $\chi$-binding function $x\mapsto 2x$.
		
		\item
		Gaspers and Huang~\cite{GH2019} showed that the $\{P_6,C_4\}$-free class admits a linear $\chi$-binding function $x\mapsto 3x/2$, which was later improved to $x\mapsto \ceil{5x/4}$ by Karthick and Maffray~\cite{KM2019}.
		
		\item Cameron, Huang, Penev, and Sivaraman~\cite{CHPS2020} proved that the $\{P_7,C_4,C_5\}$-free class admits a linear $\chi$-binding function $x\mapsto 3x/2$, which was later optimised by Huang~\cite{Huang2024} to $x\mapsto \ceil{11x/9}$.
		\item Huang, Zhou, and Chang~\cite{HZC2026} proved that every $P_7$-free and even-hole-free graph~$G$ has chromatic number at most $\lceil \frac54\omega(G)\rceil$.
		
		\item Chudnovsky, Huang, Karthick, and Kaufmann~\cite{CHKK2021} proved that
		the $\{(3,2)\text{-broom},C_4\}$-free class admits a linear $\chi$-binding function $x\mapsto\ceil{3x/2}$; note that the $(3,2)$-broom is also known in the literature as the \emph{fork} or the \emph{chair}.
		
		\item Zhou, Li, Song, and Wu~\cite{ZLSW2024} proved that for every integer $\ell\ge2$, the $\{(4,\ell)\text{-broom},C_4\}$-free class admits a quadratic $\chi$-binding function. 
		
		\item Karthick and Mishra~\cite{KM2018a} 
		showed that the $\{P_6,K_4^-\}$-free class is linearly $\chi$-bounded; and later 
		Cameron, Huang, and  Merkel~\cite{CHM2021} and Goedgebeur, Huang, Ju, and Merkel~\cite{GHJM2023} improved its $\chi$-binding function; 
		note that $K_4^-$ is called the \emph{diamond} in the literature.
		
		\item Prashant, Francis Raj, and Gokulnath~\cite{PFG2023} proved that for every integer $m\ge1$, the $\{mK_2,K_5^-\}$-free class is linearly $\chi$-bounded.
		
		\item Dong, Xu, and Xu~\cite{DXX2022} claimed that 
		the $\{P_5,K_{2,3}\}$-free class 
		is $\chi$-bounded 
		by a $\chi$-binding function $x\mapsto 2x^2-x-3$,
		with an incorrect proof.
		Their gap was repaired by Ho\`ang~\cite{Hoang2026}.

		\item Liu, Schroeder, Wang, and Yu~\cite{LSWY2023} showed that the $\{(3,s)\text{-broom},K_{s,s}\}$-free class is $\chi$-bounded by a $\chi$-binding function $x\mapsto o(x^{s})$ for every $s\ge 3$.
		Note that \zcref{thm:cocktailbroomks} yields the $\chi$-binding function $x\mapsto o(x^{s-1})$ for this class, because Li, Rousseau, and Zang~\cite{LRZ2001} showed that 
		for every fixed integer~$s$, 
		$R(s,w)\le (1+o(1)) \frac{w^{s-1}}{\log^{s-2} w}$ for all sufficiently large~$w$ compared to~$s$, which improves an earlier result of Ajtai, Koml\'os, and Szemer\'edi~\cite{AKS1980}.
	\end{itemize}
	
	Chudnovsky, Scott, and Seymour~\cite{CSS2014b} proved that for every pair of graphs $H$ and~$J$, there exists an integer~$p$ such that 
	the vertex set of every $\{H,J\}$-free graph admits a partition into at most $p$ subsets, each inducing a subgraph that is edgeless, $X$-free, or $Y$-free for a component~$X$ of~$H$ or a component~$\overline{Y}$ of~$\overline{J}$.
	Combined with \zcref{thm:cocktailbroomks}, this implies that:
	
	\begin{theorem}
		For every two integers $m\ge1$ and $s\ge2$, and every forest $T$ whose components are brooms, the $\{T,\overline{mK_{s}}\}$-free class is $\chi$-bounded by a linear function of $R(s,\omega(G)+1)$.
	\end{theorem}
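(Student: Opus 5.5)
Write $T=T_1\cup\dots\cup T_t$, where each $T_i$ is a $(k_i,\ell_i)$-broom, and let $J=\overline{mK_s}$. The plan is to apply the partition theorem of Chudnovsky, Scott, and Seymour~\cite{CSS2014b} with $H=T$ and this $J$. It gives an integer $p=p(T,m,s)$ such that every $\{T,J\}$-free graph $G$ has a vertex partition into at most $p$ sets. Each part induces a subgraph that is edgeless, or $T_i$-free for some component $T_i$ of $T$, or $Y$-free for some component $\overline Y$ of $\overline J$. Since chromatic number is subadditive over a vertex partition, it suffices to show that each part has chromatic number at most $C'\cdot R(s,\omega(G)+1)$, where $C'$ depends only on $T,m,s$. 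Then we may take $C=pC'$.

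Next I treat the three kinds of parts separately. An edgeless part has chromatic number at most $1\le R(s,\omega(G)+1)$. A part that is $T_i$-free is also $J$-free, since it is an induced subgraph of $G$. So \zcref{thm:cocktailbroomks} applies to it with $(k,\ell)=(k_i,\ell_i)$, giving chromatic number at most $C(m,s,k_i,\ell_i)\cdot R(s,\omega+1)$. Here $\omega$ is the clique number of the part, which is at most $\omega(G)$, and $R$ is monotone in its second argument. For the third kind, $\overline J=mK_s$, so every component of $\overline J$ is $K_s$ and hence $Y=\overline{K_s}$. A part that is $\overline{K_s}$-free has stability number less than $s$. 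Any graph $F$ with $\alpha(F)<s$ has fewer than $R(s,\omega(F)+1)$ vertices, so $\chi(F)\le|V(F)|<R(s,\omega(G)+1)$.

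Summing over the at most $p$ parts gives $\chi(G)\le p\cdot\max\bigl(1,\max_i C(m,s,k_i,\ell_i)\bigr)\cdot R(s,\omega(G)+1)$, which is the claimed linear bound.

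The only step that needs care is checking the exact form of the Chudnovsky--Scott--Seymour partition statement. The relevant components are those of $\overline J$, and they must be complemented back, so $Y=\overline{K_s}$ and not $K_s$. The rest is routine: the edge cases $s=1$ or $m=1$ make the result trivial, and the heavy lifting is entirely in \zcref{thm:cocktailbroomks}.
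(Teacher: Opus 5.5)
Your proposal is correct and is the paper's own argument: the paper also derives this statement by applying the Chudnovsky--Scott--Seymour partition theorem with $H=T$ and $J=\overline{mK_s}$, and then bounding the broom-free parts via \zcref{thm:cocktailbroomks}. Your treatment of the remaining parts matches the intended deduction: an edgeless part has $\chi\le 1$, and a $\overline{K_s}$-free part has fewer than $R(s,\omega(G)+1)$ vertices by Ramsey's theorem.
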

	
	To motivate our second main result, let us recall the famous Gy\'arf\'as--Sumner conjecture~\cite{Gyarfas1975,Sumner1981} that the class of graphs excluding any induced tree is $\chi$-bounded.
	Hajnal and R\"odl (see~\cite{GST1980,KP1994}) in the 1970s independently obtained the following well-known weakening:
	
	\begin{theorem}
		[Hajnal and R\"odl]
		\label{thm:hrtreekss}
		For every tree $T$ and every integer $s\ge1$, the $\{T,K_{s,s}\}$-free class is $\chi$-bounded.
	\end{theorem}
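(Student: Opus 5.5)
We prove Hajnal--R\"odl by induction on $\omega(G)$, using a lemma that is itself proved by induction on $|V(T)|$. Fix $T$ and $s$. The key lemma: for every tree $T$ there is a function $g_T$ such that every $K_{s,s}$-free graph $G$ with $\chi(G)>g_T(\omega(G))$ contains an induced copy of $T$. The base case $|V(T)|\le 2$ is trivial, since then $\chi(G)\ge 2$ gives an edge.

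For the induction step we strengthen the statement to a rooted version, as in Gy\'arf\'as's and Kierstead--Penrice's approach. We aim to show that if $\chi(G)$ is huge, then $G$ contains an induced copy of $T$ in which every vertex $v$ is \emph{rich}, meaning $\chi(G[N(v)])$ is large. First we reduce to this setting. Vertices whose neighbourhood has chromatic number at most $c$ can be peeled off: the subgraph they induce has bounded degeneracy relative to $\chi$. Concretely, we use the standard fact that if every vertex has $\chi(N(v))\le c$, then $G$ either has small $\chi$ or we pass to a high-$\chi$ subgraph. Alternatively, we can apply induction on $\omega$ directly: $\chi(G[N(v)])\le f(\omega(G)-1)$ for every $v$, since $\omega(G[N(v)])\le\omega(G)-1$.

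Given that, we use a \emph{levelling} argument. Take a vertex $r$ and BFS layers $L_0,L_1,\dots$. Some layer $L_i$ has $\chi(G[L_i])\ge\chi(G)/2$, and every vertex in $L_i$ has a neighbour in $L_{i-1}$.

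The main step, and the main obstacle, is extending a copy of $T-\ell$ (with $\ell$ a leaf) to a copy of $T$ induced, while controlling non-adjacency. Here $K_{s,s}$-freeness is used. We find many disjoint high-$\chi$ ``templates'' using the Kierstead--Penrice / R\"odl approach.
\begin{itemize}
\item Partition by neighbourhoods: given a vertex set $A$ of size $s$, the vertices complete to $A$ induce a $K_{s,s}$-free graph that contains no stable $s$-set. Since it is also $K_{\omega}$-bounded, it has at most $R(s,\omega+1)$ vertices, hence small $\chi$.
\item A Ramsey/counting argument then shows the following. In a graph with huge $\chi$, whenever a vertex $v$ has a neighbourhood of huge $\chi$, we can choose a neighbour $u$ and a set $Y\subseteq N(u)\setminus N[v]$ with $\chi(Y)$ still huge. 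We do this by discarding the vertices that have many common neighbours with $v$; this is controlled because any $s$ vertices have boundedly colourable common neighbourhood.
\end{itemize}

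We then embed $T$ greedily in BFS order of $T$. Each embedded vertex $x$ carries a reservoir $Y_x\subseteq N(x)$ of huge chromatic number, disjoint from and anticomplete to the earlier embedded vertices apart from $x$'s parent. We remove the boundedly many bounded-$\chi$ sets of vertices adjacent to two embedded vertices, or we split the reservoir using ``vertex with few neighbours'' arguments.

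The loss of $\chi$ at each step is a function of $s$, $\omega$, and $|T|$, so choosing $g_T$ large enough makes the process survive $|V(T)|$ steps and yields the induced $T$. The theorem follows with $f=g_T$. The delicate point is guaranteeing anticompleteness to all earlier vertices, not only to the parent. I expect this to need the template/reservoir bookkeeping, with the $K_{s,s}$-freeness used via the bounded-$\chi$ common neighbourhoods.
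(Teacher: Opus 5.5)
Your proposal has a genuine gap, and the scheme fails at its first step. The strengthening you aim for is false. That strengthening asks for an induced copy of $T$ all of whose vertices are \emph{rich}, with each embedded $x$ carrying a reservoir $Y_x\subseteq N(x)$ of huge chromatic number. By Erd\H{o}s there are graphs of girth at least $5$ with arbitrarily large chromatic number. They are triangle-free and $C_4$-free, hence have no induced $K_{s,s}$ for $s\ge 2$. Yet every neighbourhood is stable, so $\chi(G[N(x)])\le 1$ for all $x$ and no vertex is rich. This is not a degenerate case: when $\omega(G)=2$, an induced $K_{s,s}$ is the same as a $K_{s,s}$ subgraph, so the triangle-free case already carries the full difficulty. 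Your alternative reduction does not help either. Induction on $\omega$ gives only the \emph{upper} bound $\chi(G[N(v)])\le f(\omega(G)-1)$, never the lower bound that richness needs. Peeling off vertices whose neighbourhoods have small $\chi$ is not a reduction, since that may be every vertex. Also, the common-neighbourhood bound you quote needs $A$ to be a \emph{stable} $s$-set; otherwise the common neighbourhood can contain stable $s$-sets.

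Suppose instead you take reservoirs to be high-$\chi$ sets attached to $x$ rather than contained in $N(x)$, as in Gy\'arf\'as's path argument, where deleting a neighbourhood costs at most $f(\omega-1)$. Then the step you defer becomes the core difficulty. You must give a vertex several pairwise nonadjacent children whose reservoirs are pairwise anticomplete, still of large $\chi$, and anticomplete to all earlier embedded vertices. This branching problem is exactly what that method cannot handle; it is why Gy\'arf\'as's argument reaches paths and brooms but not general trees. Bounded common neighbourhoods of stable $s$-sets do not supply it.

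The missing idea is to trade chromatic number for degrees and density. By \zcref{lem:kssparse}, $K_{s,s}$-freeness plus Ramsey makes $G$ $(1/(4s),2R(s,\omega(G)+1))$-sparse. A genuine degree-type theorem for induced trees then bounds the average degree of every induced subgraph. Options are Kierstead--Penrice degree-boundedness or, in the sparse formulation, Fox--Nenadov--Pham. The paper's own version is \zcref{thm:sparse-tree}, which grows wide skeletons root-first via \zcref{lem:build-skeleton} and then removes overlaps and cross-edges using \zcref{lem:sparse-shrink}. Degeneracy then gives $\chi(G)\le C\cdot R(s,\omega(G)+1)$, as in \zcref{prop:avgdeg}. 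The paper itself only cites the Hajnal--R\"odl theorem. It notes that the theorem follows from \zcref{thm:treedeg}, and from the stronger \zcref{thm:treekss} proved by this route.
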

	
	In view of this theorem, \zcref{conj:cmpchi} with $m=2$ would imply:
	
	\begin{theorem}\label{thm:treekss}
		For every tree $T$ and every integer $s\ge1$,
		there exists $C=C(T,s)\ge1$ such that every $\{T,K_{s,s}\}$-free graph $G$ has chromatic number at most $C\cdot R(s,\omega(G)+1)$. 
	\end{theorem}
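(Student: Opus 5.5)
Since $K_{s,s}$ is the complement of $2K_s$, I would aim to show directly that the class of $\{T,K_{s,s}\}$-free graphs behaves like a $\chi$-dense class with respect to $K_{s,s}$, adapting the Ramsey argument from the introduction. The plan is an induction on $|V(T)|$, following the Gy\'arf\'as-style argument behind \zcref{thm:hrtreekss}, with the Ramsey number $R(s,\omega(G)+1)$ in place of the crude bound.

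\textbf{Setup.} Fix $T$ with a leaf $v$ whose neighbour is $u$, and let $T'=T-v$. By induction, $\{T',K_{s,s}\}$-free graphs satisfy $\chi\le C'\cdot R(s,\omega+1)$. Suppose $G$ is $\{T,K_{s,s}\}$-free with $\chi(G)> C\cdot R(s,\omega(G)+1)$, and write $N:=R(s,\omega(G)+1)$.

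\textbf{Key density step.} I would show that every vertex set $X$ with $\chi(G[X])\ge sN$ has the following property: for any stable set $S$ of size $s$, only a set of small chromatic number can be completely joined to $S$. The argument: the common neighbourhood $Y$ of $S$ satisfies $\alpha(G[Y])<s$, since a stable $s$-subset of $Y$ together with $S$ would form a $K_{s,s}$. Hence $|Y|<N$, and in particular $\chi(G[Y])<N$.

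\textbf{Embedding.} Then I would run the standard Hajnal--R\"odl/Gy\'arf\'as embedding. First extract a ``$\chi$-heavy'' induced copy of $T'$ in which the vertex playing $u$ has a neighbourhood outside the copy of large chromatic number, chosen anticomplete to the rest of the copy. Pick $v$ there. The $K_{s,s}$-freeness is used to bound, via the density step, how much chromatic number is lost when discarding vertices adjacent to many previously embedded vertices. More precisely, vertices seeing at least $s$ vertices of an already chosen stable set number fewer than $\binom{|T|}{s}N$. This keeps every loss at an additive $O_T(N)$ per level, so $C$ depending on $T$ and $s$ suffices.

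\textbf{Main obstacle.} The real difficulty is that the classical Hajnal--R\"odl argument loses multiplicatively in $\omega$: it needs large-$\chi$ sets inside neighbourhoods, and $\chi(N(x))$ relates to $\omega-1$ only recursively. This recursion gives towers rather than a linear dependence on $N$. To avoid induction on $\omega$, I would try to use a degeneracy/sparsity argument instead: pass to a subgraph with minimum degree large in terms of $\chi$, then use that $K_{s,s}$-freeness plus $\alpha$-Ramsey bounds the neighbourhood intersections. This would be a Fox--Nenadov--Pham-type statement: $T$ embeds induced in any graph of large minimum degree in which every $s$ vertices forming a stable set have fewer than $N$ common neighbours. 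I would prove that embedding lemma greedily, tree vertex by vertex, maintaining large candidate sets, and expect it to be the crux.
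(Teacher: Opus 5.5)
\textbf{There is a genuine gap: the embedding lemma you defer is the whole theorem, and the greedy method you propose for it does not work.} The step you postpone as ``the crux'' is that $T$ embeds as an induced subgraph of every graph of minimum degree at least $C\cdot N$ in which every stable $s$-set has fewer than $N$ common neighbours. (This must be stated with $N\ge R(s,\omega(G)+1)$; for arbitrary $N$, large cliques are counterexamples.) Your density step controls only common neighbourhoods of stable $s$-sets. It says nothing about how much of a candidate set a \emph{single} already-embedded vertex sees. For instance, for $s\ge 2$ a universal vertex creates no induced $K_{s,s}$, yet every induced tree through it is a star. More generally, a child of an embedded vertex $x$ must avoid $N[p]$ for the parent $p$ of $x$, and $N(x)\setminus N[p]$ can be tiny. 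For $s\ge3$, two nonadjacent embedded vertices can also have almost equal neighbourhoods. So nothing guarantees that a choice keeping all candidate sets large exists, and showing that it does is exactly the hard part. The claimed additive $O_T(N)$ loss per step is therefore unjustified. Your count of vertices seeing at least $s$ vertices of an embedded stable set is correct but beside the point: inducedness already fails for a vertex adjacent to one embedded non-parent vertex, and the embedded tree is not a stable set. The Hajnal--R\"odl/Gy\'arf\'as route, as you concede, loses multiplicatively and cannot give a bound linear in $N$.

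\textbf{The two missing ingredients, which are how the paper proves the statement.} First, turn your local density step into a global bipartite one: $G$ is $(1/(4s),\,2R(s,\omega(G)+1))$-sparse (\zcref{lem:kssparse}). To see this, suppose $e_G(A,B)>(1-c)|A||B|$ with $c=1/(4s)$ and $|A|,|B|\ge 2N$. Then more than half of $B$ consists of vertices missing at most $2c|A|$ vertices of $A$, and Ramsey gives a stable $s$-set $Y$ among them. This $Y$ has at least $|A|/2\ge N$ common neighbours in $A$, contradicting your density step. Second, apply the Fox--Nenadov--Pham theorem (\zcref{thm:fnp25}, or the paper's own \zcref{thm:sparse-tree}): a $(c,t)$-sparse graph of average degree at least $C(T,c)\,t$ contains an induced $T$. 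This is the deep part, and it is not proved greedily. Fox, Nenadov and Pham grow the tree leaf by leaf with delicate bookkeeping. The paper first builds a wide skeleton (a locally injective homomorphism sending root-to-leaf paths to induced paths) by adding new roots, and then cleans overlaps and cross-edges using \zcref{lem:sparse-shrink}, losing only polynomial factors in $1/c$. With these two ingredients, your degeneracy reduction finishes the proof. Every induced subgraph of $G$ is again $\{T,K_{s,s}\}$-free with clique number at most $\omega(G)$, so it has average degree less than $2C\cdot R(s,\omega(G)+1)$. Hence $G$ is degenerate accordingly and $\chi(G)\le\lceil 2C\rceil R(s,\omega(G)+1)$ (\zcref{prop:avgdeg}).
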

	
	By taking $s=2$, this result implies that the $\{T,C_4\}$-free class is linearly $\chi$-bounded, which also contains several aforesaid results on linear $\chi$-boundedness of graphs excluding a short path and $C_4$.
	\zcref{thm:treekss} is true, and actually follows from a recent result of Fox, Nenadov, and Pham~\cite{FNP2025} that a certain sparsity condition on a host graph forces an induced subgraph isomorphic to a fixed tree. Formally, for $c>0$ and an integer $t\ge2$, a graph $G$ is \emph{$(c,t)$-sparse} if for all (not necessarily disjoint) $A,B\subset V(G)$ with $\abs A,\abs B\ge t$, there are at most $(1-c)\abs A\abs B$ ordered pairs $(u,v)$ with $u\in A$, $v\in B$, and $uv\in E(G)$.
	Then the result of Fox, Nenadov, and Pham~\cite{FNP2025} says the following.
	
	\begin{theorem}[Fox, Nenadov, and Pham~{\cite[Theorem 1.7]{FNP2025}}]
		\label{thm:fnp25}
		For every tree $T$, every $c\in(0,\frac12)$, there exists $C=C(T,c)\ge1$ such that the following holds.
		For every integer $t\ge1$ and every two graphs $\Gamma$ and $G$ such that $G$ is a subgraph of $\Gamma$ and $\Gamma$ is $(c,t)$-sparse, either:
		\begin{itemize}
			\item $G$ has average degree less than $Ct$; or
			
			\item $G$ has a subgraph that is actually induced in $\Gamma$ and is isomorphic to $T$.
		\end{itemize}
	\end{theorem}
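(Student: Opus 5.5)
The plan is to argue by induction on $\abs{T}$, embedding $T$ one vertex at a time in a breadth-first order. For each not-yet-embedded vertex we keep a candidate set of possible images. These sets are kept large, and each is contained in the common $\Gamma$-nonneighbourhood of all vertices embedded so far, except for the vertex's parent, to which it must be $G$-adjacent.

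The basic tool comes straight from $(c,t)$-sparsity. Fix $A\subset V(\Gamma)$ with $\abs A\ge t$, and let $B$ be the set of vertices having at least $(1-c)\abs A$ $\Gamma$-neighbours in $A$. If $\abs B\ge t$, the pair $(B,A)$ would have more than $(1-c)\abs A\abs B$ edges (after slightly adjusting $c$ to get a strict inequality). So $\abs B<t$: fewer than $t$ vertices are \emph{bad} for $A$, and every other vertex $x$ keeps at least $c\abs A$ of $A$ in its $\Gamma$-nonneighbourhood.

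First I reduce to a subgraph $G'\subset G$ of minimum degree at least $d/2$, where $d\ge Ct$ is the average degree of $G$. Then I strengthen the statement so that the induction goes through. Root $T$ at a vertex $r$ and let $k=\abs{T}$. The claim to prove is: if $\delta(G')\ge C_k t$ with $C_k\approx (2k/c)^{k}$, then for every $A\subset V(G')$ with $\abs A\ge C_k t/2$, all but fewer than $kt$ vertices of $A$ are roots of a copy of $T$ in $G'$ that is induced in $\Gamma$.

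For the inductive step I write $T$ as the root $r$ joined to the roots of subtrees $T_1,\dots,T_q$. I choose the image $y$ of $r$ to avoid the fewer than $kt$ vertices that are bad for any current candidate set. I then embed the $T_i$ in turn inside $N_{G'}(y)$. Before embedding $T_i$, I pass to the part of $N_{G'}(y)$ that is $\Gamma$-nonadjacent to all vertices already used. Each used vertex was chosen non-bad for this set, so it shrinks the set by at most a factor of $c$. Since at most $k$ vertices are ever used, the set keeps size at least $c^{k}\delta(G')/2\ge C_{k-1}t$, which is exactly the size the induction hypothesis needs.

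The main obstacle is a circularity. A vertex $z$ embedded deep inside $T_1$ must be non-bad with respect to the candidate sets used later for $T_2,\dots,T_q$. Those sets are subsets of $N_{G'}(y)$, and they are only determined after $z$ is placed. I would try to break this by fixing in advance, right after $y$ is chosen, the single set $S=N_{G'}(y)$. I then require every later vertex to be non-bad for $S$ and for the at most $2^{k}$ intersections of $S$ with $\Gamma$-nonneighbourhoods that can arise. Because of the ``all but fewer than $kt$ roots'' form of the hypothesis, each of these constraints excludes fewer than $t$ vertices, so the constraints can be added to the exceptional set inside the induction.

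If that bookkeeping fails, my fallback is the rooted counting version. In that version I count rooted copies and show that a $(1-o(1))$-fraction of the $G'$-neighbours of a typical vertex extend. I expect the improved constant in the paper to come from running this argument with $C$ only exponential in $\abs T$.
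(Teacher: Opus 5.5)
\textbf{There is a genuine gap, and it is not only bookkeeping.}

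In your inductive step the hypothesis controls only the \emph{root} $x_i\in N_{G'}(y)$ of each copy of $T_i$. The rest of that copy lies somewhere in $G'$, and nothing prevents it from being $\Gamma$-adjacent to $y$, or adjacent to (or overlapping) the copies of $T_1,\dots,T_{i-1}$. Non-badness protects only the candidate set of the next root. It says nothing about $N_{G'}(x)$ for a vertex $x$ chosen later, which is where the children of $x$ must go, and an earlier vertex such as $y$ may be $\Gamma$-adjacent to all of it.

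Enlarging an exceptional set of size $O(kt)$ cannot fix this, because your strengthened statement is false. Here is a counterexample.
Let $H$ be a $C_4$-free graph of minimum degree at least $C_kt$, for instance the incidence graph of a projective plane. Let $H^+$ be $H$ plus one vertex adjacent to all of $V(H)$, and let $\Gamma=G'$ be the disjoint union of $m$ copies of $H^+$.
Two vertices of $H$ have at most one common neighbour, so $e_H(A,B)\le\abs{A}\abs{B}^{1/2}+\abs{B}$. The added vertices contribute at most $\abs{A}+\abs{B}$ further pairs. Hence for $\abs{A},\abs{B}\ge t\ge16$ we get $e_\Gamma(A,B)\le(t^{-1/2}+3/t)\abs{A}\abs{B}<\frac12\abs{A}\abs{B}$.
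So $\Gamma$ is $(c,t)$-sparse for every $c\in(0,\frac12)$ and has minimum degree at least $C_kt$. Now take $T=P_3$ rooted at an end. No added vertex $y$ is the root of a copy induced in $\Gamma$: in any path $y\text-x\text-z$, the vertex $z$ lies in the component of $y$, hence is adjacent to $y$.
With $m\ge3t$ and $A$ containing all added vertices, this contradicts your claim. The same happens for every rooted tree of depth at least $2$. The theorem itself holds in this example; what fails is the idea that the top-level image can be chosen among almost all vertices before the lower levels are known.

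The missing idea is to choose the root only after the lower levels exist, using a global count. The paper grows the tree from the bottom up through skeletons. These are locally injective homomorphisms from a $(d,h)$-wide rooted tree that send root-to-leaf paths to induced paths, so different branches may overlap until the very end.

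In \zcref{lem:build-skeleton} there are two cases. In the first, deleting the set $X$ of roots of all $(W,h)$-skeletons costs little \emph{average} degree; then induction on the depth inside $G\setminus X$ gives a contradiction. This is why the paper tracks average rather than minimum degree. In the second case, $X$ carries total degree more than $2W\abs{G}$.
Then \zcref{lem:sparse,lem:sparse-shrink} show that each high-degree $v\in X$ has at least $\frac14(c/4)^{h}d_G(v)$ nice vertices: vertices adjacent to $v$ and anticomplete to the rest of a large sub-skeleton of $\varphi_v$. Double counting gives one vertex $u$ that is nice for $\ceil{d}$ skeletons at once, and $u$ becomes the new root.
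Only afterwards does \zcref{lem:turn} extract an induced copy, again using \zcref{lem:sparse-shrink} to remove overlaps and cross edges between branches.

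As a sanity check, your target $C_k\approx(2k/c)^k$ would give a degree in $1/c$ that is linear in $\abs{T}$. The authors explicitly leave this open. Your counting fallback is too vague to evaluate.
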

	
	To see how this implies \zcref{thm:treekss}, take $c=1/(4s)$, $t=2R(s,\omega(G)+1)$, and $G=\Gamma$. The details will be explained in \zcref{sec:altproof} where we discuss the connection of these results to degree-boundedness.
	
	The argument in~\cite{FNP2025} implies that one can take $C$ to be $c^{-2^{O(\abs{V(T)})}}$ in \zcref{thm:fnp25}, which is a polynomial in $1/c$ whose degree is exponential in $\abs{V(T)}$.
	Our second main objective in this paper is to give a new proof of \zcref{thm:fnp25} that implies substantially better dependence of the degree on~$T$, as follows.
	
	\begin{theorem}\label{thm:sparse-tree}
		For every tree $T$ of radius $h\ge0$, every $c\in(0,\frac12)$, every integer $t\ge1$, and every two graphs $\Gamma$ and $G$ such that $G$ is a subgraph of $\Gamma$ and $\Gamma$ is $(c,t)$-sparse, either:
		\begin{itemize}
			\item $G$ has average degree less than $(4/c)^{4h\abs{V(T)}}t$; or
			
			\item $G$ has a subgraph that is actually induced in $\Gamma$ and is isomorphic to $T$.
		\end{itemize}
	\end{theorem}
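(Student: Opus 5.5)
We argue by induction on the radius $h$, embedding $T$ as a rooted tree layer by layer from a central vertex. First we pass from $G$ to a subgraph $G'$ of minimum degree at least $d/2$, where $d$ is the average degree of $G$. Since $G'$ is still a subgraph of the $(c,t)$-sparse graph $\Gamma$, we may assume from now on that $G$ has minimum degree at least $D:=(4/c)^{4h\abs{V(T)}}t/2$.

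The inductive object is a rooted tree of depth $h$ in which every vertex has at most $\Delta:=\abs{V(T)}$ children. We aim to embed the complete $\Delta$-ary tree of depth $h$ as an induced copy in $\Gamma$ using edges of $G$; any tree $T$ of radius $h$, rooted at a centre, is an induced subtree of it. Inducedness must hold in $\Gamma$, so $\Gamma$-edges between chosen vertices that are not tree edges must be avoided.

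The basic tool is the sparsity lemma: for any set $A$ with $\abs A\ge t$, at most $t$ vertices $v$ of $\Gamma$ are $\Gamma$-adjacent to more than $(1-c/2)\abs A$ vertices of $A$. Indeed, if $B$ were a set of $t$ such vertices, the number of ordered edges between $A$ and $B$ would exceed $(1-c)\abs A\abs B$. Combined with a Kővári--Sós--Turán-type counting, this yields a second fact. Given a large set $S$ of ``active'' vertices, each with many $G$-neighbours, one can find a vertex $x$ and $\Delta$ of its $G$-neighbours $y_1,\dots,y_\Delta$ that are pairwise $\Gamma$-nonadjacent. The $y_i$ are obtained by greedily picking from $N_G(x)$ while discarding the $\Gamma$-neighbours of the previous picks. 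This costs a factor of about $c$ per pick, provided we first restrict to a subset of $N_G(x)$ on which the $\Gamma$-degrees are at most a $(1-c/2)$ fraction; the sparsity lemma guarantees such a subset exists once it has size at least $t$ (after removing the at most $t$ exceptional vertices, iteratively).

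To build the tree we use a weighted or ``rich'' version of the induction. Say a vertex $v$ is $j$-rich in a set $U$ if it has many $G$-neighbours in $U$ that are themselves $(j-1)$-rich. The goal is to choose the root, then its $\Delta$ children, then their children, and so on. Each new vertex must be nonadjacent in $\Gamma$ to all previously chosen non-parent vertices, while keeping a $\ge (c/4)^{O(\Delta)}$ fraction of the richness. At each step at most $\abs{V(T)}$ vertices have already been chosen. Sparsity lets us pass to the common $\Gamma$-non-neighbourhood of those vertices inside the candidate set, losing a factor of roughly $(c/2)^{\abs{V(T)}}$ per level, together with an additive loss of $t$ per forbidden vertex. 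Over $h$ levels, and with two such losses per level (one for non-adjacency to earlier vertices, one for the siblings being mutually nonadjacent), this gives the threshold $(4/c)^{4h\abs{V(T)}}t$.

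The main obstacle is the sparsity lemma's exceptional set. A forbidden vertex $v$ may be one of the at most $t$ vertices that is $\Gamma$-adjacent to almost all of a candidate set, in which case its non-neighbourhood there is tiny. The fix is to choose vertices so that they avoid these exceptional sets for all the relevant candidate sets at once. Since each exceptional set has size at most $t$ and there are only boundedly many candidate sets at each step, removing all of them costs only an additive $O(\abs{V(T)}t)$, which is absorbed because the candidate sets have size at least $D/(4/c)^{O(\cdot)}\gg \abs{V(T)}t$.

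Ensuring that richness survives the passage to non-neighbourhoods needs the multiplicative form. We define richness via the fraction of neighbours that are rich, rather than their number, and verify by a double-counting or averaging step that the fraction degrades by at most a factor of $c/4$ per constraint. This bookkeeping is the delicate part.
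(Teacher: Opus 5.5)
\textbf{There is a genuine gap: nothing guarantees that the structure you start building from can actually be extended.} Your cleaning stage is close in spirit to the paper's \zcref{lem:turn}. That stage adds vertices one at a time, each staying outside the few ``exceptional'' vertices of every live candidate set. The problem comes before it.

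Sparseness bounds how many vertices are $\Gamma$-adjacent to almost all of a \emph{given} set. It does not bound how many sets a \emph{given} vertex is exceptional for. When you fix the root $r$, or any vertex $x$, the candidate sets of its grandchildren and deeper descendants are neighbourhoods of vertices not yet chosen. So $x$ cannot be chosen to avoid their exceptional sets, and later choices cannot undo $x$.

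Here is a concrete failure. Let $\Gamma=G$ be $G(n,1/2)$ plus a vertex $r$ adjacent to everything. With high probability this is $(1/3,C\log n)$-sparse with minimum degree about $n/2$. In your sense $r$ is $j$-rich for every $j$. Yet for each child $y$ of $r$, every candidate grandchild in $N_G(y)\setminus\{r\}$ lies in $N_\Gamma(r)$. So no grandchild can be placed, and your claim that richness degrades by at most a factor $c/4$ per constraint fails outright. Another root would work here, but your proposal has no mechanism for selecting one, and minimum degree alone cannot provide it.

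To repair this, define richness so that along every root-to-leaf branch, the children of each vertex avoid the $\Gamma$-neighbourhoods of all its ancestors. That is exactly a wide skeleton in the paper's sense. Its existence in a $(c,t)$-sparse graph of large average degree is a real theorem, and it is the first half of the paper's proof (\zcref{lem:skeleton}).

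The paper proves it by induction on depth, adding new \emph{roots} rather than leaves. \zcref{lem:build-skeleton} gives one of two outcomes:
\begin{itemize}
\item an induced subgraph $J$ with $\de(J)\ge\de(G)-2(4/c)^{h+1}d$ containing no $((4/c)^{h+1}d,h)$-skeleton; or
\item a new root: each high-degree root $v$ of a depth-$h$ skeleton $\varphi_v$ has at least $\frac14(c/4)^h d_G(v)$ vertices that are $2(c/4)^{h+1}$-nice for $\varphi_v$ (\zcref{claim:nice}), and double counting yields one vertex nice for $\ceil d$ of these skeletons, which becomes the root of a depth-$(h+1)$ skeleton.
\end{itemize}
This is where the $\binom{h+1}{2}$ part of the exponent comes from.

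Only once all future candidates are pre-committed inside the skeleton does a greedy cleaning work. Then each new vertex only has to be good for structures that already exist (\zcref{lem:sparse-shrink}), which is the direction in which sparseness gives control.

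Your accounting also needs revising. The complete $\Delta$-ary tree of depth $h$ has about $\Delta^h$ vertices, not $\abs{V(T)}$, so you should embed $T$ itself. The exponent $4h\abs{V(T)}$ is asserted rather than derived.
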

	
	(Here, recall that the \emph{radius} of the tree $T$ is the least $h\ge0$ such that there exists $v\in V(T)$ of distance at most $h$ to every vertex in $T$.)
	The proof of this result, presented in \zcref{sec:sparsetree}, involves the notion of locally injective homomorphisms which we find to be of independent interest.
	
	It would be interesting to improve the term $4h\abs{V(T)}$ to a fully linear function of $\abs{V(T)}$ in \zcref{thm:sparse-tree}; however we have been unable to decide this.
	It would also be very interesting to show that 
	for every forest $T$ and every two integers $m$, $s\ge2$, 
	$\{T,\overline{mK_s}\}$-free graphs are  $\chi$-bounded by a linear function of $R(s,\omega(G)+1)$, since this would unify \zcref{thm:treekss,thm:cocktailbroomks} and the Gy\'arf\'as--Sumner conjecture (by taking $m$ to be the clique number of the host graph).
	
	\subsection*{Notation}
	For an integer $k\ge1$, let $[k]:=\{1,2,\ldots,k\}$. For a graph $G$, let $\abs G:=\abs{V(G)}$. For $S\subset V(G)$, let $G[S]$ be the subgraph of $G$ obtained by removing all vertices in $V(G)\setminus S$, and let $G\setminus S:=G[V(G)\setminus S]$.
	For $v\in V(G)$, let $G\setminus v:=G\setminus\{v\}$, let $N_G(v):=\{u\in V(G)\setminus\{v\}:uv\in E(G)\}$, and let $N_G[v]:=N_G(v)\cup\{v\}$.
	For disjoint $A,B\subset V(G)$, we say that $A$ is \emph{anticomplete} to $B$, or $A$, $B$ are \emph{anticomplete}, if $G$ has no edge between $A$ and $B$.
	We say $A$ is \emph{complete} to $B$, or $A$, $B$ are \emph{complete}, if every vertex in $A$ is adjacent to all vertices of $B$.
	A \emph{stable} set is a set of pairwise nonadjacent vertices and a \emph{clique} is a set of pairwise adjacent vertices.
	For sets $X\subset A$ and $Y\subset B$, and a map $f\colon A\to B$, let $f(X):=\{f(x):x\in X\}$ and $f^{-1}(Y):=\{a\in A:f(a)\in Y\}$.

	\section{Stable sets with high-chromatic common neighbourhood in $P_k$-free graphs}
	\label{sec:stableset}
	
	This section presents the proof of \zcref{thm:cocktailks}.
	The proof method is via the following key fact.
	
	\begin{theorem}
		\label{thm:path2}
		Let $k\ge5$, $s\ge 2$, $q\ge 1$ be integers and let $G$ be a $P_k$-free graph with 
		\[\chi(G)\ge s^{k-3}\left(2q+7s\cdot R(s,\omega(G)+1)-7s\right).\]
		Then $G$ contains a stable set $S$ of size $s$  with $\chi(G[\bigcap_{v\in S} N_G(v)])> q$.
	\end{theorem}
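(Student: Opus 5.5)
We argue by induction on $k$, following Gy\'arf\'as's path-growing argument, with the factor $s^{k-3}$ coming from $s$-way pigeonholing at each step. Set $R:=R(s,\omega(G)+1)$. We prove a rooted statement. Suppose $G$ is connected and contains an induced path $v_1\dots v_j$. Suppose also that $X\subset V(G)\setminus N[\{v_1,\dots,v_{j-1}\}]$ is a set with $G[X]$ connected, every vertex of $X$ anticomplete to $v_1,\dots,v_{j-1}$, $v_j$ adjacent to some vertex of $X$, and $\chi(G[X])\ge s^{k-j-2}(2q+7sR-7s)$ (roughly). Then either $G$ has an induced $P_k$, or some stable $s$-set has common neighbourhood of chromatic number $>q$. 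The theorem follows by taking a component of maximum chromatic number and a single vertex $v_1$ with $X$ a suitable component of $G\setminus N[v_1]$; the neighbourhood $N(v_1)$ loses only $\chi(G[N(v_1)])$, which we may assume is small.

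The inductive step runs as follows. Let $Y:=N(v_j)\cap X$. If $\chi(G[Y])$ is large (at least about $R+q$), we argue directly. Any set of $R$ vertices of $Y$ contains a stable $s$-set, since $\omega$ is bounded. To find one whose common neighbourhood has chromatic number $>q$, colour $Y$ greedily: repeatedly extract stable $s$-sets $S\subset Y$. For each such $S$, either the common neighbourhood of $S$ already has $\chi>q$, or we can remove it and continue. The constant $7s$ is what absorbs this bookkeeping.

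Otherwise $\chi(G[Y])$ is small. Then $G[X\setminus Y]$ has large chromatic number, so some component $Z$ of it keeps almost all of it. Let $Y'$ be the set of vertices of $Y$ with neighbours in $Z$. This is where the factor $s$ enters. Either some vertex $y\in Y'$ is such that the part of $Z$ attached through $y$ has chromatic number at least a $1/s$ fraction, or a splitting argument applies. In the first case we extend the path by $y$: the new set $X'$ is a component of $Z\setminus N(y)$ adjacent to $y$, and we recurse with $j+1$. In the splitting case, vertices of $Y'$ partition $Z$'s chromatic mass, and we find $s$ pairwise nonadjacent vertices of $Y'$ (using that $\chi(G[Y'])$ exceeds $R$ or is handled as above). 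These $s$ vertices are all adjacent to $v_j$, and their common neighbourhood inside $Z$ has large chromatic number, because otherwise the mass would split further. Since $Z\cup\{y\}$ sits anticomplete to the earlier path, a long induced path can be grown. When $j$ reaches $k$ we have an induced $P_k$, a contradiction. Each level costs a factor $s$ and an additive $O(sR)$, which matches the bound $s^{k-3}(2q+7sR-7s)$, with the base of the induction at $k=5$ absorbing the start.

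The main obstacle is the splitting case: proving that if no single $y\in Y'$ carries a $1/s$ fraction of $\chi(G[Z])$, then some stable $s$-subset of $Y'$ has common neighbourhood with $\chi>q$. We would first try a Gy\'arf\'as-type cover. Write $Z=\bigcup_{y\in Y'}(N(y)\cap Z)\cup(\text{rest})$. Apply a Ramsey step within $Y'$: each $(\omega+1)$-clique-free set of size $R$ contains a stable $s$-set. If every stable $s$-set had common neighbourhood of chromatic number at most $q$, an inclusion–exclusion or colour-class count would bound $\chi(G[Z])$ by $s\cdot(\text{max single }y\text{ share})+O(sR)+2q$, which is a contradiction. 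This step is where the specific constants $2q$ and $7s$ must be justified, and it is the part we are least certain about.
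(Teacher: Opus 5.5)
There is a genuine gap. Your inductive step is the classical Gy\'arf\'as step, in which each extension of the path costs $\chi(G[N(v_j)\cap X])$. Everything therefore rests on the claim that when this quantity is large (``about $R+q$'') you can ``argue directly''. That claim is false, and the greedy extraction of stable $s$-sets you describe gives no bound on $\chi(G[Y])$.

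For a counterexample, take $s=q=2$ and let $G$ be a triangle-free graph $H$ of huge chromatic number plus a universal vertex $v$; $G$ is $P_k$-free once $k>\abs{H}+1$. Then $\omega(G)=3$. Every stable pair has a common neighbourhood inducing a star, so that neighbourhood has chromatic number at most $2=q$. Yet $\chi(G[N(v)])=\chi(H)$ is arbitrarily large compared with $R(2,4)+q=6$.

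The only way to extract a good stable set from a high-chromatic neighbourhood is to apply the statement itself inside $G[N(v_j)]$, by induction on $\omega$. That requires $\chi(G[N(v_j)])\ge s^{k-3}(2q+7sR(s,\omega)-7s)$, which is essentially the whole budget. So in your ``small'' case each step can still consume almost all of the chromatic number. Bookkeeping that subtracts neighbourhood chromatic numbers cannot give a bound linear in $R(s,\omega+1)$; this is exactly why Gy\'arf\'as's argument is exponential in $\omega$. The same objection applies to assuming $\chi(G[N(v_1)])$ is small.

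The extension step is also ill-defined: a component of $Z\setminus N(y)$ is never adjacent to $y$. The ``splitting case'' has no actual argument behind it.

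What is needed is to track \emph{non}-neighbourhoods. If a stable $s$-set $S$ has $\chi(G[\bigcap_{v\in S}N(v)])\le q$, then $\chi(F)\le q+\sum_{v\in S}\chi(F\setminus N(v))$. Hence some $v\in S$ keeps $\chi(F\setminus N(v))>(\chi(F)-q)/s$, and this, not a neighbourhood bound, is where the factor $s$ per step comes from. Similarly, let $Z$ be the set of vertices whose non-neighbourhood in a high-chromatic piece $L$ has chromatic number at most $b$. Then $\abs Z<R$, since a stable $s$-subset of $Z$ would have common neighbourhood in $L$ of chromatic number at least $\chi(L)-sb>q$.

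The remaining difficulty is this: the new path vertex must have a neighbour in the surviving piece and keep a large non-neighbourhood there, while the old path stays anticomplete. The paper handles it with high connectivity. Set $a=s(R-1)$. The Gir\~ao--Narayanan-type theorem says chromatic number at least $4a-1$ yields an $(a+1)$-connected induced subgraph losing at most $2a-1$ colours. Using it, the paper maintains an induced path $v_1\text-\cdots\text-v_p$ and an $R$-connected $J$ anticomplete to $v_1,\dots,v_{p-1}$, such that $v_p$ has at least $R$ neighbours in $J$ and $\chi(J\setminus N(v_p))>f(p)$.

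It then takes an $(a+1)$-connected $L$ inside $J\setminus N(v_p)$. Since $\abs Z<R$, it routes a shortest path in $J$ avoiding $Z$ from $N(v_p)\cap V(J)$ to $V(L)$. This extends the induced path and re-establishes the invariant, with $L$ or $L\setminus N(v_{p'})$ as the new $J$. The per-step additive losses of order $q+a$ are what the term $2q+7s(R-1)$ in the hypothesis pays for. Without this device, or some substitute that controls attachment, your outline does not close.
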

	
	In other words, this result says that if $G$ is $P_k$-free and $\chi(G)$ is at least some large linear factor of $R(s,\omega(G)+1)$, then $G$ contains a stable set of size $s$ whose common neighbourhood has chromatic number linear in $\chi(G)$.
	Let us see now that \zcref{thm:path2} implies \zcref{thm:cocktailks}.
	
	\begin{proof}
		[Proof of \zcref{thm:cocktailks}, assuming \zcref{thm:path2}]
		We may assume that $k\ge 5$ because otherwise $G$ is perfect.
		Let
		$a:=2\cdot s^{k-3}$
		and $R:=R(s,\omega(G)+1)$.
		Let
		\(
		c:=\frac{7as(R-1)}{2(a-1)}
		\),
		so that
		\(
		-c=a\left(\frac72sR-\frac72s-c\right).
		\)
		We claim the following.
		
		\begin{clm}
			{\ref*{thm:cocktailks}.1}
			\label{claim:ineqks}
			$
			\chi(G)+c\le a^{m-1}(R-1+c)$ for all integers $m\ge1$.
		\end{clm}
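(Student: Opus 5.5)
We argue by induction on $m$, where $G$ ranges over all $\{P_k,\overline{mK_s}\}$-free graphs with $R(s,\omega(G)+1)\le R$. Here $a=2s^{k-3}$ and $c$ is fixed so that $-c=a\bigl(\tfrac72sR-\tfrac72s-c\bigr)$. The claim is really that $\chi(G)\le a^{m-1}(R-1+c)-c$ for such $G$.

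\emph{Base case $m=1$.} The graph $\overline{K_s}$ is a stable set of size $s$, so $G$ has no stable set of size $s$. By the definition of Ramsey numbers, $|V(G)|\le R(s,\omega(G)+1)-1\le R-1$, so $\chi(G)\le R-1$, which is exactly the claim for $m=1$.

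\emph{Induction step $m\ge2$.} Assume the claim holds for $m-1$, and suppose for contradiction that $\chi(G)+c>a^{m-1}(R-1+c)$. Set $q:=\lfloor a^{m-2}(R-1+c)-c\rfloor$, which is at least $R-1\ge1$ since $a>1$. We want to check the hypothesis of Theorem~\ref{thm:path2} with this $q$:
\[
\chi(G)\ge s^{k-3}\bigl(2q+7sR-7s\bigr)=\tfrac a2\bigl(2q+7sR-7s\bigr)=aq+a\bigl(\tfrac72sR-\tfrac72s\bigr).
\]
Using $q\le a^{m-2}(R-1+c)-c$ and the defining identity $a\bigl(\tfrac72sR-\tfrac72s\bigr)=ac-c$, the right-hand side is at most
\[
a^{m-1}(R-1+c)-ac+ac-c=a^{m-1}(R-1+c)-c<\chi(G).
\]
So Theorem~\ref{thm:path2} applies. (We may take $k\ge5$ as in the surrounding proof, and any integrality slack is absorbed by the strict inequality.) It gives a stable set $S$ of size $s$ such that $J:=G[\bigcap_{v\in S}N_G(v)]$ has $\chi(J)>q$.

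\emph{Using the structure of $J$.} If $J$ contained an induced $\overline{(m-1)K_s}$, adding $S$ as a further part, complete to everything in $J$, would give an induced $\overline{mK_s}$ in $G$, which is impossible. So $J$ is $\{P_k,\overline{(m-1)K_s}\}$-free. Also $\omega(J)\le\omega(G)$, so $R(s,\omega(J)+1)\le R$, and the induction hypothesis gives $\chi(J)\le a^{m-2}(R-1+c)-c$. Since $\chi(J)$ is an integer, this means $\chi(J)\le q$, contradicting $\chi(J)>q$.

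\emph{Main difficulty.} The only delicate point is phrasing the induction so that $R$ stays fixed: the claim must be stated for all graphs $J$ with $R(s,\omega(J)+1)\le R$, not just for $G$. With that in place, the arithmetic is exactly what the choice of $c$ was designed to make work.
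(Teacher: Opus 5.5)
Your proof is correct and follows the paper's argument. It uses induction on $m$ with the Ramsey base case, and in the inductive step it applies Theorem~\ref{thm:path2} with the same $q=\lfloor a^{m-2}(R-1+c)-c\rfloor$, the observation that the common neighbourhood of $S$ is $\overline{(m-1)K_s}$-free, and the identity $a\bigl(\tfrac72 sR-\tfrac72 s\bigr)=ac-c$. The differences are only cosmetic: you argue by contradiction rather than through the contrapositive of Theorem~\ref{thm:path2}, and you state explicitly that $R$ (hence $c$) stays fixed over all graphs with $R(s,\omega+1)\le R$, which the paper leaves implicit. That framing also supplies the small monotonicity step your threshold check needs but does not state: Theorem~\ref{thm:path2}'s hypothesis involves $R(s,\omega(G)+1)\le R$, so the bound with $R$ suffices.
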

		
		\begin{subproof}
			We proceed by induction on $m$.
			If $m=1$, then $G$ has no stable set of size $s$, and so $\chi(G)\le R-1$ by the definition of the Ramsey number.
			
			Thus we may assume that $m\ge2$.
			Let $S$ be a stable set of size $s$ in $G$. Since $\bigcap_{v\in S}N_G(v)$ is complete to $S$, 
			if 
			\(
			G\left[\bigcap_{v\in S}N_G(v)\right]
			\)
			contains an induced $\overline{(m-1)K_s}$, then combining this with $S$ would give an induced $\overline{mK_s}$ in $G$, a contradiction.
			Hence 
			\(
			G\left[\bigcap_{v\in S}N_G(v)\right]
			\)
			is $\overline{(m-1)K_s}$-free; and so the induction hypothesis implies
			\[
			\chi\left(G\left[\bigcap_{v\in S}N_G(v)\right]\right)\le a^{m-2}(R-1+c)-c.
			\]
			By \zcref{thm:path2} with $q:=\lfloor a^{m-2}(R-1+c)-c\rfloor $, we deduce that
			\[
			\chi(G)<a
			\left(q+\frac72sR-\frac72s\right)\le a^{m-1}(R-1+c)+a\left(\frac72sR-\frac72s-c\right)=a^{m-1}(R-1+c)-c.
			\]
			This proves \zcref{claim:ineqks}.
		\end{subproof}
		
		Since $k\ge5$ and $s\ge2$, we have $a=2\cdot s^{k-3}\ge 8$, and so 
		\[ R-1+c = \left(1+\frac{7as}{2(a-1)}\right)(R-1)
		\le \left(1+\left(7+\frac{7}{a-1}\right)\frac{s}{2}\right)(R-1)\le (1+4s)(R-1).\]
		Thus \zcref{claim:ineqks} yields
		\(
		\chi(G)\le a^{m-1}(R-1+c)< (2s^{k-3})^{m-1}(4s+1)R
		\).
		This proves \zcref{thm:cocktailks}.
	\end{proof}
	
	The rest of this section deals with the proof of \zcref{thm:path2}, which employs an analogue of the well-known `Gy\'arf\'as path' argument that was used by Gy\'arf\'as to prove that the $P_k$-free class is $\chi$-bounded for all $k\ge1$~\cite{Gyarfas1987}.
	We will actually follow the ideas behind the proof of~\cite[Lemma 2.3]{NSS2024}, but substantially modified to keep the numbers in shape.
	We would like to simultaneously maintain two vertex-disjoint substructures in $G$: an induced path $P$ and a highly connected subgraph $J$ with chromatic number linear in $\chi(G)$, such that the only vertex in $P$ with a neighbour in $J$ is an endpoint of $P$, and the portion of $J$ having no neighbour in $P$ still has large chromatic number. Each time we aim to lengthen $P$ by concatenating it with some induced path in $J$ and shrink $J$ essentially by a linear factor.
	In order to make this work, we apply the following result of the first author~\cite{Nguyen2024} on highly connected subgraphs with linear chromatic number, which numerically improves a fundamental theorem of Gir\~ao and Narayanan~\cite{GN2022}.
	In what follows, for an integer $a\ge0$, a graph $G$ is \emph{$a$-connected} if $\abs{V(G)}>a$ and there is no $S\subset V(G)$ with $\abs S<a$ such that $V(G)\setminus S$ admits a partition into two nonempty subsets that are disjoint and anticomplete in $G$.
	
	\begin{theorem}
		[Nguyen~{\cite[Proposition 4.4]{Nguyen2024}}]
		\label{thm:kappachi}
		For every integer $a\ge1$, every graph $G$ with $\chi(G)\ge 4a-1$ contains an $(a+1)$-connected induced subgraph $F$ with $\chi(F)\ge \chi(G)-2a+1$.
	\end{theorem}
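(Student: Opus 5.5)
The plan is an extremal-choice argument in the spirit of Gir\~ao and Narayanan, set up so that the total chromatic loss is paid once rather than once per separation. Put $k:=\chi(G)$. Among all pairs $(F,Z)$ with $F$ an induced subgraph of $G$ and $Z\subset V(F)$, $\abs Z\le 2a-2$, satisfying
\[\chi(F\setminus Z)\ge k-2a+1\quad\text{and}\quad \chi(F)\ge k-\abs Z,\]
I choose one with $\abs{V(F)}$ minimal. The pair $(G,\emptyset)$ is admissible, so such a pair exists. The set $Z$ is a ``boundary'' that records the separators already cut through. The second inequality says the chromatic number has dropped by at most $\abs Z$. The aim is to show that $F\setminus Z$, or some piece of it, is $(a+1)$-connected, since $\chi(F\setminus Z)\ge \chi(G)-2a+1$ is already guaranteed.

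First, I would use minimality to get degree conditions: every vertex $v\notin Z$ has degree at least $k-\abs Z-1$ in $F$. Otherwise, deleting $v$ would keep both inequalities, because a colouring of $F\setminus v$ extends to $v$. Since $k\ge 4a-1$ and $\abs Z\le 2a-2$, this gives minimum degree about $2a$ outside $Z$, which is more than $a$. In particular $\abs{V(F)}>a+1$.

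The main step is the separation case. Suppose $T\subset V(F)$ with $\abs T\le a$ and $V(F)\setminus T=A\cup B$, where $A$ and $B$ are nonempty and anticomplete. Colour $F[A\cup T]$ and $F[B\cup T]$ optimally, and give $T$ its own fresh $\abs T$ colours. This yields
\[\chi(F)\le \max\bigl(\chi(F[A]),\chi(F[B])\bigr)+\abs T.\]
So one side, say $A$, satisfies $\chi(F[A\cup T])\ge \chi(F)-\abs T$. I then want to replace $(F,Z)$ by $(F[A\cup T],Z')$ with $Z'=(Z\cap A)\cup T$, or better with a set of size at most $\abs Z$.

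The obstacle is exactly the bookkeeping here: keeping $\abs{Z'}\le 2a-2$ while $\chi$ drops by only $\abs{Z'}$ in total. What I would try first is to split according to where $Z$ lies. If $Z$ meets $B$ in at least $\abs T$ vertices, the boundary does not grow. If instead $Z$ lies mostly in $A$, I would argue symmetrically with the other side $B$, whose boundary is small. The key point is that $Z$ can be split between the two sides in only boundedly many ways. Choosing the side that holds at most half of $Z$ gives a boundary of size at most $\abs Z/2+a\le 2a-1$. I would then tighten this to $2a-2$ using the degree bound, which forbids very small sides.

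If this bookkeeping fails as stated, my fallback is to weaken the potential to $\chi(F)-\abs Z$ and minimise that lexicographically together with $\abs{V(F)}$. Once no separation of size at most $a$ remains in $F\setminus Z$, that graph is $(a+1)$-connected. It has chromatic number at least $k-2a+1$ because $\abs Z\le 2a-2$ and removing $Z$ costs at most $\abs Z$ colours (more precisely, by the first defining inequality). This is the required subgraph $F$.
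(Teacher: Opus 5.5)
\textbf{There is a genuine gap, and it sits where you place the difficulty.} The paper gives no proof of this statement; it quotes it from Nguyen's earlier work. So your argument has to stand on its own. The separation step is never carried out, and with your two invariants it cannot be.

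\emph{The second inequality cannot be kept.} After cutting along $T$, the only information about the surviving side is $\chi(F[A\cup T])\ge\chi(F)-\abs T\ge k-\abs Z-\abs T$. This gives $\chi(F')\ge k-\abs{Z'}$ only if $\abs{Z'}\ge\abs Z+\abs T$. Since $Z'\subseteq (Z\cap A)\cup T$, that requires $Z$ to avoid both $T$ and $B$. Even then, $\abs{Z'}$ exceeds $2a-2$ once $\abs Z+\abs T>2a-2$.

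\emph{Your remedies do not close this.} If $Z$ meets $B$ in at least $\abs T$ vertices, the boundary does not grow, but the loss of up to $\abs T$ colours remains, so the second inequality fails. You also cannot \emph{choose} the side holding at most half of $Z$: the side is dictated by where the chromatic number lives, and the other side may be $2$-colourable.

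\emph{Further problems.} The first invariant $\chi(F\setminus Z)\ge k-2a+1$ has a fixed threshold and no slack, and nothing in the step bounds $\chi(F[A\setminus Z'])$ from below. Your step handles separations of $F$ of order at most $a$. Your conclusion, however, needs to exclude separations of $F\setminus Z$, which are separations of $F$ of order up to $a+\abs Z$. The lexicographic fallback is not an argument. (A minor point: deleting $v\notin Z$ preserves the first invariant only if $v$ has fewer than $k-2a$ neighbours in $F\setminus Z$, so the degree bound you can actually claim is $k-2a$.)

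\textbf{The key claim is false, not just unproven.} A vertex-minimal admissible pair need not have $F\setminus Z$ $(a+1)$-connected. Take $a=2$ and $k=7$. Let $G$ be the disjoint union of a $7$-chromatic graph of girth at least $10$ and the Haj\'os join $G_1$ of two copies of $K_5$. To form $G_1$, identify a vertex $x$ of one copy with a vertex of the other, delete the edges $xu$ and $xw$ in the two copies, and add the edge $uw$. Then $G_1$ is $5$-critical on $9$ vertices, with $2$-cuts $\{x,u\}$ and $\{x,w\}$.
\begin{itemize}
\item Admissibility forces $\chi(F)\ge k-\abs Z\ge 5$.
\item Any at most $9$ vertices of the girth-$10$ component induce a forest.
\item $G_1$ is vertex-critical.
\end{itemize}
Hence every vertex-minimal admissible pair has $F=G_1$ and $\abs Z=2$. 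Such pairs exist: take $Z$ to be two vertices of the second copy other than $x,w$, so $\chi(G_1\setminus Z)=4$.

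Yet for every $2$-set $Z$, the graph $G_1\setminus Z$ has a cutset of size at most $2$. If $Z$ misses $\{x,u\}$ or $\{x,w\}$, that pair still separates. Otherwise $Z=\{u,w\}$ or $x\in Z$, and either choice leaves a cut vertex or disconnects the graph.

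So no bookkeeping with these two inequalities and plain vertex-minimality can work. What is missing is an invariant that provably passes to one side of every cut of order at most $a$, the role played by the $-k$ term in Mader's edge-counting potential. Neither of your inequalities has that property.
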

	
	We are now ready to prove \zcref{thm:path2}, as follows.
	
	\begin{proof}[Proof of \zcref{thm:path2}]
		Suppose not. 
		Let $\omega=\omega(G)$.
		Let $a:=s(R(s,\omega+1)-1)$ and $r:=\frac{q+3a}{s-1}$, 
		let $f(1):=\frac1s(\chi(G)-q-2a)$; and for every $i\ge2$, let $f(i):=\frac1s (f(i-1)-(q+3a))$. Then $f(i)+r=s^{1-i}(f(1)+r)$ for all $i\ge 1$.
		The numbers $f(i)$ will be lower bounds on the chromatic number of the portion of $J$ with no neighbour in $P$ in the aforementioned sketch.
		
		We first use the hypothesis on $\chi(G)$ to deduce that:
		\begin{clm}
			{\ref*{thm:path2}.1}
			\label{claim:check2}
			$f(k-3)\ge q+4a$ and $f(k-2)>0$.
		\end{clm}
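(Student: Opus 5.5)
The claim is a direct computation from the closed form $f(i)+r=s^{1-i}(f(1)+r)$ and the lower bound on $\chi(G)$. The key observation is that $7s\cdot R(s,\omega+1)-7s=7a$, so the hypothesis reads $\chi(G)\ge s^{k-3}(2q+7a)$.

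\emph{Step 1: express $f(k-3)$ in terms of $\chi(G)$.} Since $k\ge5$, we have $k-3\ge 2$, so the index is valid. Substituting $f(1)=\frac1s(\chi(G)-q-2a)$ into the closed form gives
\[
f(k-3)+r=s^{4-k}\Bigl(\tfrac1s(\chi(G)-q-2a)+r\Bigr)=s^{3-k}\bigl(\chi(G)-q-2a+sr\bigr).
\]

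\emph{Step 2: lower bound.} Using $\chi(G)\ge s^{k-3}(2q+7a)$, we get
\[
f(k-3)+r\ge 2q+7a+s^{3-k}(sr-q-2a).
\]
The last term is nonnegative. Indeed, $sr=\frac{s(q+3a)}{s-1}=(q+3a)+\frac{q+3a}{s-1}$, so
\[
sr-q-2a=a+\tfrac{q+3a}{s-1}>0 .
\]
Hence $f(k-3)\ge 2q+7a-r$. Since $s\ge2$, we have $r=\frac{q+3a}{s-1}\le q+3a$, and therefore
\[
f(k-3)\ge 2q+7a-(q+3a)=q+4a,
\]
which is the first assertion.

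\emph{Step 3: the second assertion.} By definition,
\[
f(k-2)=\tfrac1s\bigl(f(k-3)-(q+3a)\bigr)\ge \tfrac{a}{s}=R(s,\omega+1)-1 .
\]
It remains to check that this is positive. The graph $G$ is nonempty, since $\chi(G)$ is large, so $\omega\ge1$. With $s\ge2$ and $\omega+1\ge2$, the Ramsey number satisfies $R(s,\omega+1)\ge2$: a single vertex contains neither a stable set of size $s$ nor a clique of size $\omega+1$. Thus $a\ge s>0$ and $f(k-2)>0$.

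There is no real obstacle here. The only points needing care are the identity $7sR-7s=7a$, which converts the hypothesis into the right form, and the sign of $sr-q-2a$, which is what lets us drop that term in Step 2.
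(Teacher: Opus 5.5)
Your proof is correct and matches the paper's argument. The paper also uses the closed form $f(i)+r=s^{1-i}(f(1)+r)$, shows $f(1)+r>\frac1s\chi(G)$ (which is equivalent to your $sr-q-2a>0$), bounds $r\le q+3a$, and concludes $f(k-2)\ge a/s$; your check that $a/s=R(s,\omega+1)-1\ge 1$ supplies a detail the paper leaves implicit.
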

		\begin{subproof}
			Observe that 
			\[ f(1)+r=\frac1s(\chi(G)-q-2a)+\frac{q+3a}{s-1}
			= \frac1s\chi(G) + (q+2a)\left(\frac1{s-1}-\frac1s\right) +\frac{a}{s-1}
			>\frac1s\chi(G),\] 
			and so
			\begin{align*}
				f(k-3)=s^{4-k}(f(1)+r)-r> s^{3-k}\chi(G)-\frac{q+3a}{s-1}\ge s^{3-k}\chi(G)-(q+3a)\ge q+4a
			\end{align*}
			where the last inequality holds by the hypothesis. 
			Then $f(k-2)=\frac{1}{s}(f(k-3)-(q+3a))\ge \frac{a}{s}>0$.
			This proves \zcref{claim:check2}.
		\end{subproof}
		
		Now, \zcref{thm:kappachi} gives an $(a+1)$-connected induced subgraph $F$ of $G$ with
		$$\chi(F)\ge\chi(G)-2a+1\ge R(s,\omega+1).$$
		Thus $F$ has a stable set $S$ of size $s$. By our supposition, $\chi(G[\bigcap_{v\in S} N_F(v)])\le q$. Hence
		\[
		\sum_{v\in S} \chi(F\setminus N_F(v))\ge\chi\left(F\setminus \left(\bigcap_{v\in S} N_F(v)\right)\right)
		\ge \chi(F)-q> \chi(G)-q-2a
		\]
		and so there exists $v\in S$ 
		with $\chi(F\setminus N_F(v))>\frac1s(\chi(G)-q-2a)=f(1)$. Moreover, since $F$ is $(a+1)$\nobreakdash-connected, $v$ has more than $a=s(R(s,\omega+1)-1)$ neighbours in $V(F)$
		and $F\setminus v$ is $a$-connected.
		Thus there exists an integer $p\ge 1$ maximal such that $G$ contains an induced path $v_1\text-v_2\text-\cdots\text-v_p$ and an $R(s,\omega+1)$-connected induced subgraph $J$ of $G\setminus\{v_1,\ldots,v_p\}$ such that
		\begin{itemize}
			\item $\{v_1,\ldots,v_{p-1}\}$ is anticomplete to $V(J)$;
			
			\item $v_p$ has at least $R(s,\omega+1)$ neighbours in $V(J)$; and
			
			\item $\chi(J\setminus N_G(v_p))> f(p)$.
		\end{itemize}
		See \zcref{fig:box} for an illustration.
		
		Since $G$ is $P_k$-free and $f(k-2)>0$, we have $p\le k-3$ and so \zcref{claim:check2} yields
		\[\chi(J\setminus N_G(v_p))>f(p)\ge f(k-3)\ge 4a.\]
		Thus, \zcref{thm:kappachi} gives an $(a+1)$-connected induced subgraph $L$ of $J\setminus N_G(v_p)$ with 
		$$\chi(L)\ge\chi(J\setminus N_G(v_p))-2a+1> f(p)-2a\ge f(k-3)-2a.$$ 
		See \zcref{fig:box} for an illustration. Let
		\begin{align*}
			b&:=\frac1s(f(p)-q-2a)=\frac1s(f(p)-q-3a)+R(s,\omega+1)-1=f(p+1)+R(s,\omega+1)-1,\quad\text{and}\\ 
			Z&:=\{z\in V(J):\chi(L\setminus N_G(z))\le b\}.
		\end{align*}
		
		We claim that $\abs Z$ is small, as follows.
		
		\begin{clm}
			{\ref*{thm:path2}.2}
			\label{claim:main2}
			$\abs Z < R(s,\omega+1)$.
		\end{clm}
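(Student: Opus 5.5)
Suppose for a contradiction that $\abs Z\ge R(s,\omega+1)$. Since $Z\subset V(J)$ and $G[Z]$ has no clique of size $\omega+1$, the definition of the Ramsey number gives a stable set $S\subset Z$ with $\abs S=s$. The plan is to play the definition of $Z$ against the standing supposition (from the start of the proof of \zcref{thm:path2}) that every stable set of size $s$ in $G$ has common neighbourhood of chromatic number at most $q$.

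The common neighbourhood $\bigcap_{z\in S}N_G(z)$ induces a subgraph with chromatic number at most $q$. Hence its intersection with $V(L)$ also has chromatic number at most $q$. Every vertex of $L$ outside this common neighbourhood lies in $L\setminus N_G(z)$ for some $z\in S$; here the vertices $z\in S$ that lie in $V(L)$ are themselves nonneighbours of $z$, so they are covered as well. Therefore
\[
\chi(L)\le q+\sum_{z\in S}\chi(L\setminus N_G(z))\le q+sb=q+f(p)-q-2a=f(p)-2a,
\]
using $z\in Z$ for every $z\in S$ and the definition $b=\frac1s(f(p)-q-2a)$.

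This contradicts $\chi(L)>f(p)-2a$, which was established when $L$ was chosen via \zcref{thm:kappachi}. Hence $\abs Z<R(s,\omega+1)$.

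The only delicate point is the bookkeeping with the constants: the subadditivity bound must land exactly on $f(p)-2a$, so that the strict inequality $\chi(L)>f(p)-2a$ yields the contradiction. With the definition of $b$, it does. One also needs to note that $S$ may intersect $V(L)$; as observed above, this is harmless, because each $z\in S$ is a nonneighbour of itself and is therefore counted in $L\setminus N_G(z)$.
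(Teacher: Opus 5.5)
Your proposal is correct and follows essentially the same argument as the paper. Both take a stable $s$-set in $Z$ by Ramsey's theorem and apply subadditivity of $\chi$ to the cover of $V(L)$ by the common neighbourhood and the sets $V(L)\setminus N_G(z)$. The paper states the contradiction as $\chi$ of the common neighbourhood exceeding $q$, whereas you state it as $\chi(L)\le f(p)-2a$, which is the same inequality rearranged.
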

		\begin{subproof}
			Suppose that $\abs Z\ge R(s,\omega+1)$. Then $Z$ contains a stable set $S$ of size $s$. Thus $\chi(L\setminus(\bigcap_{v\in S} N_G(v)))\le bs$, which implies
			$$\chi\left(L\left[V(L)\cap \bigcap_{v\in S} N_G(v)\right]\right)\ge\chi(L)-bs> f(p)-2a-bs= q,$$
			contrary to the hypothesis. This proves \zcref{claim:main2}.
		\end{subproof}

		\begin{figure}
			\centering
			\begin{tikzpicture}
				\tikzset{v/.style={circle, draw, solid, fill=black, inner sep=0pt, minimum width=3pt}}
				
				\node [v, label=above:$v_1$] (v1) at (-6, 0) {};
				\node [v, label=above:$v_2$] (v2) at (-5, 0) {};
				\node [v, label=above:$v_3$] (v3) at (-4, 0) {};
				\node [v, label=above:$v_{p-1}$] (vs_1) at (-2, 0) {};
				\node [v, label=above:$v_p$] (vs) at (-1, 0) {};
				
				\draw (v1) -- (v3);
				\draw[dashed] (v3) -- (vs_1); 
				\draw (vs_1) -- (vs);
				
				\draw[thick, rounded corners=15pt] (0, -2) rectangle (6, 2);
				\node [label=left:$J$] at (0, -1.5) {};
				
				\draw (1.2, 2) -- (1.2, -2);
				
				\draw[red, thick, rounded corners=10pt] (4, -0.5) rectangle (5.8, 1.8);
				\node[red] at (5.3, 1) {$L$};
				
				\node [v, label=above:$v_{p+1}$] (vs+1) at (0.5, 1) {};
				\node [v] (mid) at (1.5, 1) {};
				\node [label=above:$P$] at (2.5,1){};
				\node [v, label=above:$v_{p'}$] (vsp) at (3.5,1){};
				\node [v, label=above:$v_{p'+1}$] (vsp+1) at (4.5, 1) {};
				
				\draw (vs+1) -- (mid);
				\draw [dashed] (mid)-- (vsp);
				\draw (vsp)--(vsp+1);
				
				\foreach \i in {0,1,2,...,30} {
					\draw (vs)--(0,0.1*\i-1.5);
				}
				
				\draw[rotate around={20:(2.6,-.8)}] (2.6,-.8) ellipse (2cm and 0.3cm);
				\fill[pattern=north west lines, pattern color=gray!30, rotate around={20:(2.6,-.8)}] (2.6,-.8) ellipse (2cm and 0.3cm);
				\node at (2.65, -.75) {$Z$};
				
			\end{tikzpicture}
			\caption{An illustration of the proof of \zcref{thm:path2}.}
			\label{fig:box}
		\end{figure}

		Now, since $J$ is $R(s,\omega+1)$-connected, $\abs{V(J)\cap N_G(v_p)}\ge R(s,\omega+1)$, and $\abs{V(L)}\ge R(s,\omega+1)$, \zcref{claim:main2} gives a path $P$ in $J$ between $V(J)\cap N_G(v_p)$ and $V(L)$ such that $V(P)\cap Z=\emptyset$.
		We may assume that $P$ is a shortest such path, and let $P=v_{p+1}\text-\cdots\text-v_{p'}\text-v_{p'+1}$ where $p'>p$. Since $L$ is $(a+1)$-connected, $v_{p'+1}$ has degree at least $a+1$ in $L$. Thus, if $v_{p'}$ has fewer than $R(s,\omega+1)$ neighbours in $V(L)$ then 
		$J':=L\setminus  N_G(v_{p'})$
		is $((a+1)-(R(s,\omega+1)-1))$-connected.
		Note that $(a+1)-(R(s,\omega+1)-1)\ge R(s,\omega+1)$ because $s\ge 2$. 
		Since $v_{p'+1}\notin Z$,  we have 
		\[
		\chi(J'\setminus N_G(v_{p'+1}))\ge \chi(L\setminus N_G(v_{p'+1}))-R(s,\omega+1)+1>b-R(s,\omega+1)+1=f(p+1)
		\] 
		and thus $v_1\text-\cdots\text-v_p\text-v_{p+1}\text-\cdots\text-v_{p'+1}$ and $J'$ would violate the maximality of $p$.
		Similarly, if $v_{p'}$ has at least $R(s,\omega+1)$ neighbours in $V(L)$ then $v_1\text-\cdots\text-v_p\text-v_{p+1}\text-\cdots\text-v_{p'}$ and $J':=L$ would contradict the maximality of~$p$ since $v_{p'}\notin Z$.
		This proves \zcref{thm:path2}.
	\end{proof}
	
	
	We remark that the same proof method also yields the following extension for $\{(k,\ell)\text{-broom},\overline{mK_s}\}$-free graphs.
	The proof can be found in \zcref{sec:broom}.
	
	\begin{theorem}
		\label{thm:broomks}
		Let $k\ge2$, $\ell\ge1$, $s\ge2$, $q\ge1$ be integers, and let $G$ be a $(k,\ell)\text{-broom}$-free graph.
		If
		\[
		\chi(G)\ge 7s^{k}\ell^{s+1}(q+R(s,\omega(G)+1)),
		\]
		then $G$ contains a stable set $S$ of size $s$ with
		\(
		\chi\left(G\left[\bigcap_{v\in S}N_G(v)\right]\right)>q
		\).
	\end{theorem}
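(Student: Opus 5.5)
We will adapt the Gyárfás-path argument from the proof of \zcref{thm:path2}. The changes are that the maintained path now has length about $k$ rather than $k-2$, and that the final step must produce $\ell+1$ pairwise nonadjacent "bristles" at the end vertex instead of one more path vertex. Suppose the conclusion fails, and set $R:=R(s,\omega(G)+1)$ and $a:=s\ell(R-1)$ (or a similar multiple of $R$). As before we define a decreasing sequence $f(1),f(2),\dots$ by $f(1)=\frac1s(\chi(G)-q-2a)$ and $f(i)=\frac1s(f(i-1)-q-3a)$. The hypothesis $\chi(G)\ge 7s^k\ell^{s+1}(q+R)$ will guarantee that $f(i)$ stays at least a large multiple of $q+a$ for all $i\le k$, with some slack left for the broom step.

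Exactly as in the proof of \zcref{thm:path2}, we use \zcref{thm:kappachi} and the assumption on stable $s$-sets to obtain a maximal $p$ together with an induced path $v_1\text-\cdots\text-v_p$ and an $R$-connected induced subgraph $J$ such that:
\begin{itemize}
\item $\{v_1,\dots,v_{p-1}\}$ is anticomplete to $V(J)$;
\item $v_p$ has at least $R$ neighbours in $J$;
\item $\chi(J\setminus N_G(v_p))>f(p)$.
\end{itemize}
The extension step (\zcref{claim:main2} and the shortest path avoiding $Z$) is reused verbatim, so it shows that $p$ cannot be maximal while $f(p+1)>0$. Since the extension lengthens the path strictly, iterating it yields such a configuration with $p\ge k-1$, as long as $f(k-1)$ is still large. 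It therefore suffices to show that a configuration with $p=k-1$ forces a $(k,\ell)$-broom.

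To find the broom, consider the configuration with $p=k-1$, and more precisely the moment $p'$ in the extension step where the last path vertex $v_{p'}$ has many neighbours in a highly chromatic, highly connected piece $L$ that is anticomplete to the earlier path vertices. Then $N_G(v_{p'})\cap V(L)$ has size at least $R$. If it contained a stable set of size $\ell+1$ together with a further vertex beyond it, we would read off the broom: the handle is the path $v_{p'-k+2}\text-\cdots\text-v_{p'}$, the centre is $v_{p'}$, and the $\ell+1$ leaves are the stable neighbours. Counting, the handle contributes $k-1$ vertices from the path and the centre has $\ell+1$ leaves, which gives exactly a $(k,\ell)$-broom. Since these neighbours lie in $L$, they are anticomplete to the rest of the path, so the broom is induced.

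The main obstacle is that the neighbourhood of $v_{p'}$ in $L$ may have small stability number, so it may contain no stable set of size $\ell+1$. I would handle this by strengthening the invariant so that the last vertex has many neighbours of large chromatic number in $J$. Concretely, I would require $\chi(G[N(v_p)\cap V(J)])$ to be at least roughly $\ell(q+R)$. Then a Ramsey/greedy argument produces $\ell+1$ pairwise nonadjacent neighbours. We pick a neighbour $u_1$; its nonneighbourhood within $N(v_p)\cap V(J)$ has large chromatic number, because otherwise the common-neighbourhood hypothesis applied to $s$-stable sets containing $u_1$ would be contradicted. We then iterate, losing a factor of about $\ell$ at each step. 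Iterating this over $s$ steps of stable-set choices is where the factor $\ell^{s+1}$ in the hypothesis comes from.

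If instead the neighbourhood of $v_p$ in $J$ has small chromatic number, we delete it and continue. In that case the path genuinely extends, and it eventually reaches length $k-1$ with the final vertex having a highly chromatic neighbourhood. The bookkeeping of these two alternatives at each step, with a loss of at most $s\ell$ per step, gives a bound of the form $s^k\ell^{s+1}(q+R)$ times a constant. I expect this case split, and keeping the constants within the factor $7$, to be the delicate part.
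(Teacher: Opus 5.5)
Your framework is the paper's: a maximal path and $J$ configuration, extended by a shortest path that avoids the small set $Z$. You also isolate the right crux, namely that the end of a path on at least $k$ vertices must have $\ell$ pairwise nonadjacent neighbours in $J$. But your fix does not work.

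\textbf{The chromatic invariant cannot be maintained.} Requiring $\chi(G[N_G(v_p)\cap V(J)])$ to be large fails already for triangle-free $G$. There every neighbourhood is stable, so this chromatic number is at most $1$. Yet the theorem is non-trivial for such $G$, since the supposition holds automatically there. More generally, the extension step only ever supplies \emph{many} neighbours of the new end vertex (via the minimum degree of the $(a+1)$-connected $L$, or the case split on $\abs{N_G(v_{p'})\cap V(L)}$). It never supplies a highly chromatic neighbourhood.

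\textbf{The case split does not close.} Your alternative, ``delete a low-chromatic neighbourhood and extend'', gives no reason why the end of a long path should ever have a highly chromatic neighbourhood. So you never obtain the long path and the $\ell$ leaves at the same time.

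\textbf{The greedy step is flawed.} An arbitrary $u_1$ need not have a highly chromatic non-neighbourhood. The supposition only says that \emph{some} vertex of each stable $s$-set does, with a loss of a factor $s$. Collecting $\ell$ leaves this way therefore costs a factor exponential in $\ell$, not $\ell^{s+1}$.

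\textbf{The broom count is off by one.} $v_{p'-k+2}\text-\cdots\text-v_{p'}$ with $\ell+1$ leaves is a $(k-1,\ell+1)$-broom. You need $k$ path vertices ending at the centre plus $\ell$ leaves, which is available since $p'\ge k$.

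\textbf{The missing idea: a size threshold that depends on $q$.} Set $B:=\ell\binom{\ell}{s}q+\ell^{s}R$. Under the supposition, every set $N$ of at least $B$ vertices contains $\ell+1$ pairwise nonadjacent vertices (\zcref{lem:stablechi}). To see this, suppose not and take a maximum stable set $S$ of $G[N]$, so $\abs S\le\ell$. Split $N\setminus S$ by neighbourhood in $S$:
\begin{enumerate}
\item Vertices seeing at least $s$ vertices of $S$ lie in the common neighbourhoods of at most $\binom{\ell}{s}$ stable $s$-sets, each of chromatic number at most $q$.
\item For each nonempty $U\subset S$ with $\abs U<s$, the vertices seeing exactly $U$ induce a graph with stability number less than $s$ (by maximality of $S$) and clique number less than $\omega(G)$. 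Hence there are fewer than $R$ of them.
\end{enumerate}
This gives $\chi(G[N])<\binom{\ell}{s}q+\ell^{s-1}R$, so $\abs N\le\ell\,\chi(G[N])<B$, a contradiction.

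\textbf{How the proof then runs.} Rerun the proof of \zcref{thm:path2} with $R$ replaced by $B$ in the connectivity of $J$ and in the neighbour count of $v_p$. Take $a:=s(B-1)$, so that $(a+1)-(B-1)\ge B$; the argument that $Z$ has fewer than $R$ vertices is unchanged. Now the at least $B$ neighbours of $v_p$ in $J$ contain $\ell$ pairwise nonadjacent vertices anticomplete to $v_1,\dots,v_{p-1}$. Broom-freeness then gives $p\le k-1$ immediately, and the unchanged extension step contradicts the maximality of $p$. The hypothesis $\chi(G)\ge 7s^{k}\ell^{s+1}(q+R)$ is what guarantees $f(k-1)\ge 4a$ with this larger $a$. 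This is where $\ell^{s+1}$ comes from.
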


	It is not hard to iterate \zcref{thm:broomks} to deduce \zcref{thm:cocktailbroomks}; we again omit its routine proof.

	\section{
		Degree-boundedness, $(c,t)$-sparseness, and $K_{s,s}$-freeness}\label{sec:altproof}
	
	In this section, we discuss the relations between degree-boundedness, $(c,t)$-sparseness, and $K_{s,s}$-freeness, and show a quick deduction of \zcref{thm:treekss} from \zcref{thm:fnp25} as promised in \zcref{sec:intro}. This section can also be viewed as a survey of known results in the literature.
	
	Recall that a hereditary class $\mac G$ of graphs is \emph{degree-bounded} 
	if there exists a function $g\colon\mab N\to\mab R_{\ge0}$ such that 
	for every $s\ge1$, every graph in $\mac G$ with no $K_{s,s}$ subgraph has average degree at most $g(s)$ (this is usually defined for minimum degree instead of average degree, but for hereditary classes the two definitions are equivalent; see~\cite{DM2025a}).
	If the function $g$ can be taken as a polynomial, then we call $\mac G$ \emph{polynomially degree-bounded}.
	It is not hard to see that $\mac G$ is (polynomially) $\chi$-bounded if it is (polynomially) degree-bounded and does not contain $K_{s,s}$ for some $s\ge1$.
	See~\cite{DM2025a} for a survey of degree-boundedness, which observed that a hereditary class is degree-bounded if and only if all of its bipartite $C_4$-free graphs have bounded average degree~\cite[Corollary 2.8]{DM2025a}.
	The two most well-known degree-bounded classes are:
	\begin{theorem}
		[Kierstead and Penrice~\cite{KP1994}]
		\label{thm:treedeg}
		For every tree $T$, the $T$-free class is degree-bounded.
	\end{theorem}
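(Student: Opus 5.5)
The plan is to derive \zcref{thm:treedeg} from \zcref{thm:sparse-tree}, applied with $\Gamma=G$. Fix a tree $T$ of radius $h$ and an integer $s\ge1$, and let $G$ be a $T$-free graph with no $K_{s,s}$ subgraph. I will show that $G$ is $(c,t)$-sparse with $c:=\frac14$ and some $t=t_0(s)$ depending only on $s$. Then \zcref{thm:sparse-tree} with $\Gamma=G$ gives a dichotomy. The second outcome would be a subgraph of $G$ isomorphic to $T$ that is induced in $G$, which is impossible since $G$ is $T$-free. Hence $G$ has average degree less than $16^{4h\abs{V(T)}}t_0(s)$, which depends only on $T$ and $s$. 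This is exactly degree-boundedness with $g(s):=16^{4h\abs{V(T)}}t_0(s)$.

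The main step is the sparseness claim. Suppose $A,B\subset V(G)$ satisfy $\abs A,\abs B\ge t$ and there are more than $\frac34\abs A\abs B$ ordered pairs $(u,v)\in A\times B$ with $uv\in E(G)$. The sets may overlap, so I first make them disjoint. I will assign each vertex of $A\cap B$ independently and uniformly to one of two sides, keeping $A\setminus B$ on side $A'$ and $B\setminus A$ on side $B'$. An edge pair $(u,v)$ has $u\neq v$ since there are no loops, and it survives, meaning $u\in A'$ and $v\in B'$, with probability at least $\frac14$. So some choice keeps more than $\frac3{16}\abs A\abs B$ edges between the disjoint sets $A'$ and $B'$. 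Since $\abs{A'}\le\abs A$ and $\abs{B'}\le\abs B$, this is at least $\frac3{16}\abs{A'}\abs{B'}$. This also forces $\abs{A'}>\frac3{16}\abs A\ge\frac3{16}t$, since each vertex of $A'$ has at most $\abs B$ neighbours in $B'$; symmetrically $\abs{B'}>\frac3{16}t$.

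Next I apply the Kővári--Sós--Turán bound to the bipartite graph between $A'$ and $B'$. Such a graph with parts of sizes $n_1,n_2$ and no $K_{s,s}$ has at most $O_s(n_1n_2^{1-1/s}+n_2)$ edges, and the same holds with the roles of $n_1,n_2$ swapped. Both bounds are $o(n_1n_2)$ once $\min(n_1,n_2)$ is large in terms of $s$; taking the better of the two orientations handles the case where one side is much larger than the other. So for $t_0(s)$ large enough, the density above $\frac3{16}$ forces a $K_{s,s}$ subgraph in $G$, a contradiction. Concretely, counting $s$-subsets of $A'$ inside neighbourhoods of vertices of $B'$ via Jensen shows that density $\frac3{16}$ with both sides at least $\frac3{16}t_0$ suffices when $t_0$ is roughly $(16/3)^{s+1}s$.

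I expect the only delicate part to be the overlap of $A$ and $B$ and the unbalanced part sizes in the Kővári--Sós--Turán step. The random splitting and the symmetric form of the bound handle both, so the proof reduces to routine bookkeeping on top of \zcref{thm:sparse-tree}.
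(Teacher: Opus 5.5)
Your argument is correct, and deriving \zcref{thm:treedeg} this way is not circular. The proof of \zcref{thm:sparse-tree} in \zcref{sec:sparsetree} never uses Kierstead--Penrice, and you only need the case $\Gamma=G$, which is exactly \zcref{thm:sparsetree}. The paper itself gives no proof of \zcref{thm:treedeg}; it quotes it from Kierstead and Penrice. The only derivation it indicates is in \zcref{sec:altproof}: \zcref{thm:fnp25} makes the $T$-free class mildly sparse, and mild sparseness gives degree-boundedness ``by taking $c=t^{-2}$''. So your overall route matches that remark, but your reduction from $K_{s,s}$-freeness to $(c,t)$-sparseness is different.

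The paper's choice is $t=s$ and $c=s^{-2}$. Suppose $\abs A,\abs B\ge s$ and $e_G(A,B)>(1-s^{-2})\abs A\abs B$. Then fewer than $\abs A\abs B/s^2$ ordered pairs in $A\times B$ are non-edges, counting the pairs $(a,a)$. Take independent uniformly random $s$-subsets $A_0\subseteq A$ and $B_0\subseteq B$. The expected number of non-edge pairs in $A_0\times B_0$ is then below $1$. A choice with none gives disjoint $A_0,B_0$ that are complete to each other, that is, a $K_{s,s}$ subgraph.

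This handles overlapping $A,B$ for free and avoids K\H{o}v\'ari--S\'os--Tur\'an. It also yields $g(s)=(4s^2)^{4h\abs{V(T)}}s$, which is polynomial in $s$, i.e.\ the polynomial degree-boundedness of \zcref{cor:degbddctsparse}.

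With $c=\frac14$ fixed, as in your proof, $t_0(s)$ is necessarily exponential in $s$. Indeed, a first-moment count gives $K_{s,s}$-free graphs of edge density close to $7/8$ on $2^{\Omega(s)}$ vertices, and these are not $(\frac14,t)$-sparse with $t$ equal to their order. So your argument only gives $g(s)=16^{4h\abs{V(T)}}2^{O(s)}$, and you would need \zcref{thm:polydeg} to make it polynomial. In exchange, your bound separates the dependence on $T$ from that on $s$: the factor exponential in $h\abs{V(T)}$ has base $16$ rather than $4s^2$.
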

	\begin{theorem}
		[K\"uhn and Osthus~\cite{KO2004}]
		\label{thm:subsdeg}
		For every graph $H$, the class of graphs excluding all induced subdivisions of $H$ is degree-bounded.
	\end{theorem}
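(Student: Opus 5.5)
This is a cited result, the theorem of K\"uhn and Osthus, so the paper does not prove it. If I had to prove it, I would follow the standard K\"uhn--Osthus strategy and argue by contradiction. Fix $H$ and $s$, and suppose $G$ has no $K_{s,s}$ subgraph but has huge average degree $d$. The goal is then to find an induced subdivision of $H$ in $G$.

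The first step is a reduction to the bipartite $C_4$-free case. By the cited characterisation \cite[Corollary 2.8]{DM2025a}, it suffices to show that bipartite $C_4$-free graphs with large average degree contain an induced subdivision of $H$. This needs some care, because a subdivision induced in a subgraph need not be induced in $G$. I would instead use the standard fact from the K\"uhn--Osthus argument: a $K_{s,s}$-subgraph-free graph of huge average degree contains an induced subgraph of large girth (at least $5$) with still large average degree. This comes from a probabilistic or dependent random choice deletion argument on short cycles, and it uses the absence of $K_{s,s}$ to bound the number of $C_4$'s.

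The second step is to work inside such a graph $G'$ of large girth and large average degree $d'$. A graph of high girth and high minimum degree has many vertices at distance exactly $2$ with tree-like neighbourhoods. I would pick $|V(H)|$ branch vertices, far apart in a bounded-radius expanding structure, and connect them by paths. By Mader's theorem, large average degree gives a topological $K_{|H|}$-minor, and in particular a subdivision of $H$ as a subgraph. The difficulty is making it induced: chords between different subdivided paths must be avoided.

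The main obstacle is exactly this inducedness. My first attempt would be the K\"uhn--Osthus approach of passing to a further subgraph where every vertex has degree between $d''$ and $Cd''$ and the girth is large. In that setting, choosing paths greedily of length about $\log$ of the number of vertices, and deleting the neighbourhoods of already chosen paths, removes only a small proportion of the graph at each step. So the remaining graph keeps large average degree and the next path can be routed there. Because the girth is large, a new path avoiding the closed neighbourhoods of earlier paths creates no chords, except at the branch vertices, which are handled by reserving their neighbourhoods in advance. Iterating over the edges of $H$ yields an induced subdivision, giving the desired contradiction.
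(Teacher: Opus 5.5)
You are right that the paper gives no proof of this: it is quoted from K\"uhn and Osthus and used only as a black box (to obtain Theorem~\ref{thm:inducedsub}). Taken as a stand-alone argument, though, your sketch has genuine gaps. First, the fact you invoke (or the survey's characterisation) only provides an induced subgraph that is bipartite and $C_4$-free, i.e.\ of bounded girth ($\ge 6$). Your last step, however, relies on ``the girth is large'' and on tree-like balls. That cannot simply be added. If every $C_4$-free graph of huge average degree had an \emph{induced} subgraph of girth at least $g$ and average degree at least $k$, then together with K\"uhn and Osthus's theorem that every graph of large average degree has a $C_4$-free subgraph of large average degree, this would prove Thomassen's conjecture on subgraphs of large girth, which is open. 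Second, regularising to degrees between $d''$ and $Cd''$ by passing to a (non-induced) subgraph brings back exactly the problem you flagged yourself. Non-edges of that subgraph may be edges of $G$, so an induced subdivision found there need not be induced in $G$; an induced almost-regular subgraph would need its own proof.

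The decisive gap is the routing step. Knowing that the graph left after deleting the closed neighbourhoods of earlier paths still has large average degree says nothing about whether the two \emph{prescribed} branch vertices, through their reserved neighbourhoods, are joined by a path in it. The remaining graph may be disconnected. A reserved neighbourhood of size about $d''$ can be cut off by a deleted set of size about $Cd''$ times the path length. And nothing you have established gives paths of length $O(\log n)$ between prescribed vertices, which your counting needs. What is required is an expansion property of a suitably chosen induced subgraph that is robust under deleting all these neighbourhoods, together with branch vertices whose reserved sets grow faster than what is deleted (for instance via sublinear expanders). This is the heart of the theorem, and your proposal does not supply it. Note also that ``a path avoiding the closed neighbourhoods of earlier paths has no chords to them'' holds by definition and does not use girth. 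Likewise, Mader's theorem yields only a non-induced subdivision, so it contributes nothing towards inducedness.
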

	
	(We remark that \zcref{thm:treedeg} implies \zcref{thm:hrtreekss}.)
	It was also recently shown by Gir\~ao and Hunter~\cite{GH2025} that degree-boundedness and its polynomial strengthening are equivalent:
	\begin{theorem}
		[Gir\~ao and Hunter~\cite{GH2025}]
		\label{thm:polydeg}
		A hereditary class is degree-bounded if and only if it is polynomially degree-bounded.
	\end{theorem}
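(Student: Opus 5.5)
The reverse implication is immediate, since a polynomial is in particular a function $\mab N\to\mab R_{\ge0}$. For the forward implication, I would start from the characterisation recalled above from~\cite[Corollary 2.8]{DM2025a}. If $\mac G$ is degree-bounded, then there is a constant $k$ such that every bipartite $C_4$-free graph in $\mac G$ has average degree less than $k$. Because $\mac G$ is hereditary, this means: no graph in $\mac G$ contains an \emph{induced} bipartite $C_4$-free subgraph of average degree at least $k$. The goal is then a purely extremal statement, to be proved for arbitrary graphs. It should say that every $K_{s,s}$-subgraph-free graph $G$ of average degree at least $s^{C(k)}$ contains an induced bipartite $C_4$-free subgraph of average degree at least $k$. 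Here $C(k)$ depends only on $k$, so for fixed $\mac G$ the bound $g(s)=s^{C(k)}$ is a polynomial in $s$.

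I would prove this extremal statement by induction on $k$. The plan has three steps.
\begin{enumerate}
\item Pass to a bipartite subgraph $G[A,B]$, not necessarily induced, of comparable minimum degree. Then use a dependent-random-choice or density-increment argument to find large $A'\subset A$ and $B'\subset B$ such that $G[A']$ and $G[B']$ have small degree compared with the degrees across. For this step the $K_{s,s}$-freeness should give, via K\H{o}v\'ari--S\'os--Tur\'an-type counting, that dense sets contain large sparse pieces, with polynomial losses.
\item Remove the edges inside $A'$ and inside $B'$ by random sampling. Keep each vertex with probability $p$, and then delete one endpoint of each surviving internal edge. With $p$ chosen polynomially in $s$ and the current degree, this yields an \emph{induced} bipartite subgraph whose average degree is still a polynomial fraction of the original.
\item Destroy the induced $C_4$'s. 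Here $K_{s,s}$-freeness bounds the codegrees on average: the number of $C_4$'s is at most roughly $s\cdot n d^2$ rather than $n d^3$. A second random sparsification at a suitable rate (an alteration argument), followed by deleting a vertex from each surviving $C_4$, therefore leaves average degree $d^{\Omega(1)}/\mathrm{poly}(s)$.
\end{enumerate}

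Iterating steps 1--3 a bounded number of times, with the number of rounds depending on $k$, shows that the required starting degree is only polynomial in $s$. Each round costs a polynomial factor, and the degree of that polynomial depends only on $k$.

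I expect step 3 to be the main obstacle. The codegree bound coming from $K_{s,s}$-freeness is only an \emph{average} bound, so some pairs of vertices can have huge common neighbourhoods. I would first try to handle this by a regularisation step. I would restrict to a subgraph in which every vertex lies in few $C_4$'s, deleting the heavy vertices; the average bound should make this cost only a constant fraction of the edges. After that, a straightforward alteration at sampling rate about $d^{-1/2}$ should leave induced $C_4$-free bipartite graphs of average degree $d^{1/2}/\mathrm{poly}(s)$. This is polynomial, which is all that is needed.
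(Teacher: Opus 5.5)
Your reduction is sound. The converse is trivial. If $\mac G$ is degree-bounded, then every bipartite $C_4$-free graph in $\mac G$ has no $K_{2,2}$ subgraph and so has average degree at most $g(2)$. By heredity it then suffices to prove your extremal statement.

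\textbf{The gap.} That extremal statement is the whole content of the theorem, and your sketch for it fails. Step 3 treats $K_{s,s}$-freeness as if it gave K\H{o}v\'ari--S\'os--Tur\'an-type control of codegrees. When $d$ is only polynomial in $s$, it gives no such control.

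Fix $C$, let $s$ be large in terms of $C$, and let $G$ be the random bipartite graph with parts $A,B$ of size $n:=2s^{C}$ and edge probability $\frac12$.
\begin{enumerate}
\item The expected number of $K_{s,s}$ subgraphs is at most $n^{2s}2^{-s^2}\le 2^{-s}$ once $s\ge 2\log_2 n+1$. So with high probability $G$ has no $K_{s,s}$ subgraph.
\item Its average degree is about $s^{C}$.
\item It has $\Theta(n^4)$ four-cycles, which is of order $\abs G\,d^3$, not $s\abs G d^2$. So the count in step 3 is false, and deleting vertices lying in many four-cycles would delete almost everything.
\end{enumerate}
Worse, with high probability any $A'\subset A$ and $B'\subset B$ with $\abs{A'},\abs{B'}\ge 32\ln n$ span at least $\abs{A'}\abs{B'}/4$ edges. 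This exceeds the bound $\abs{A'}\abs{B'}^{1/2}+\abs{B'}$ for a $C_4$-free bipartite graph. Hence every induced $C_4$-free subgraph of $G$ has a side of size $O(\log n)$, and so average degree $O(\log n)=O(C\log s)$. That is far below the $d^{1/2}/\operatorname{poly}(s)$ you promise after one round.

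The non-bipartite $G(n,\frac12)$ defeats steps 1 and 2 in the same way. Its stable sets have size $O(\log n)$, so no induced bipartite subgraph has average degree more than $O(\log n)$, let alone a polynomial fraction of $d$.

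So no scheme that loses only polynomial factors in $d$ per round can work. In this dense, random-like regime the target subgraph has only polylogarithmically many vertices, which is why the exponent $C(k)$ must grow with $k$. Note also that none of your three steps uses $k$, so the iteration over a $k$-dependent number of rounds has no content as stated.

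\textbf{The missing idea.} Use $K_{s,s}$-freeness only in the form that survives at these scales. A graph with no $K_{s,s}$ subgraph is $(1/(4s),2s)$-sparse. This follows from the argument of \zcref{lem:kssparse} with arbitrary $s$-sets in place of stable ones, so no Ramsey step is needed. Your extremal statement is then exactly \zcref{thm:reduce-to-c4free} of Hunter, Milojevi\'c, Sudakov, and Tomon. As the paper notes, its constant $C(k,c)$ can be taken polynomial in $1/c$ for fixed $k$. With $c=1/(4s)$ and $t=2s$ this gives the threshold $s^{k^{O(k)}}$.

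This is the route implicit in the paper's remark that degree-boundedness implies polynomial mild sparseness. The paper itself does not reprove the theorem but quotes it from Gir\~ao and Hunter. The real work lies in proving a statement like \zcref{thm:reduce-to-c4free}. That needs arguments working at scale $t$ from the $c$-fraction of non-adjacent pairs between any two $t$-sets, not from global four-cycle counts.
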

	
	Let us recall that for $c>0$ and an integer $t\ge1$, a graph $G$ is \emph{$(c,t)$-sparse} if, for all subsets $A,B\subset V(G)$ (not necessarily disjoint) with $\abs A,\abs B\ge t$, we have
	\[e_G(A,B):=\abs{\{(a,b)\in A\times B:ab\in E(G)\}}\le (1-c)\abs A\abs B\]
	where the pairs in $A\cap B$ that form an edge of $G$ are counted twice in $e_G(A,B)$. Note that the class of $(c,t)$-sparse graphs is closed under taking subgraphs.
	Given this definition, let us say that a class~$\mac G$ of graphs is \emph{mildly sparse} if for every $c>0$, there exists $C=C(\mac G,c)\ge1$ such that for every integer~$t\ge1$ and every $G\in\mac G$, if $G$ is $(c,t)$-sparse then it has average degree less than $Ct$; and we say that~$\mac G$ is \emph{polynomially mildly sparse} if $C$ can be taken as a polynomial in $1/c$. Then \zcref{thm:fnp25} implies that the $T$-free class is mildly sparse for all trees $T$; and the proof in~\cite{FNP2025} shows that every such class is actually polynomially mildly sparse.
	
	It turns out that the notions of (polynomial) degree-boundedness and (polynomial) mild sparseness coincide, as follows.
	First, the latter implies the former (also for their polynomial versions) by taking $c=t^{-2}$ in the definition of mild sparseness.
	To see the other direction, we apply the following recent theorem of Hunter, Milojevi\'c, Sudakov, and Tomon~\cite{HMST2025a}.
	
	\begin{theorem}[Hunter, Milojevi\'c, Sudakov, and Tomon~{\cite{HMST2025a}}]\label{thm:reduce-to-c4free}
		For $k\ge1$ and $c>0$, there exists $C=C(k,c)\ge1$ such that for every $t\ge1$, every $(c,t)$-sparse graph with average degree at least $Ct$ contains
		a bipartite $C_4$-free induced subgraph of average degree at least $k$.
	\end{theorem}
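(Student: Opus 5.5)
The plan is to reduce to the known ``induced Kővári--Sós--Turán'' phenomenon for $K_{s,s}$-free graphs, using $(c,t)$-sparseness as a substitute for $K_{s,s}$-freeness at scale $t$. Concretely, I would proceed in three steps.

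\textbf{Step 1: a dense bipartite pair at scale $t$.} Let $G$ be $(c,t)$-sparse with average degree $d\ge Ct$. First pass to a subgraph of minimum degree at least $d/2$. Then find disjoint sets $X,Y$ such that:
\begin{itemize}
\item $\abs X,\abs Y$ are of order $d$;
\item the bipartite graph $G[X,Y]$ has average degree at least $\eta d$ for some $\eta=\eta(c)>0$;
\item $G[X]$ and $G[Y]$ have maximum degree at most $\eta d/10$.
\end{itemize}
The natural choice is to take a vertex $v$, set $Y\subseteq N(v)$, and take $X$ to be the vertices with many neighbours in $Y$. Sparseness applied to sets of size at least $t$ (and $d\gg t$) bounds the density inside every large set by $1-c$. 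This lets us iteratively delete high-degree vertices inside $X$ and $Y$ while losing only a constant fraction of the cross edges. A density-increment argument over the parameter $c$ should give $\eta$ polynomial in $c$.

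\textbf{Step 2: random sparsification to kill $C_4$'s and internal edges.} Sample each vertex of $X\cup Y$ independently with probability $p=D/d$, for a large constant $D=D(k,c)$. The expected cross degree of a sampled vertex is about $\eta D$, while the expected internal degree is at most $\eta D/10$. Deleting one endpoint of each internal edge therefore leaves an induced bipartite graph of average degree about $\eta D/2$.

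The remaining task is to control the induced $C_4$'s, i.e.\ pairs of sampled vertices with two common sampled neighbours. Here I would use $(c,t)$-sparseness to bound codegrees. If many pairs in $X$ had codegree at least $t$, this would yield sets $A\subseteq X$, $B\subseteq Y$ of size at least $t$ with density above $1-c$, a contradiction. Since $d\gg t$, the expected number of surviving $C_4$'s is then $O(p^4\cdot(\text{number of }C_4\text{'s}))$, which is small compared with the number of surviving edges once $C$ is large relative to $D$. Deleting a vertex from each surviving $C_4$ then leaves a $C_4$-free induced bipartite subgraph of average degree at least $k$ after choosing $D$ suitably.

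\textbf{Main obstacle.} The hardest step will be the codegree bound in Step 2. Sparseness gives no $(1-c)$-dense pair of sets of size $t$, but it does not directly forbid many pairs of moderate codegree; these can still create roughly $d^4 t^{-1}$-many $C_4$'s. For this reason I expect a single round of sampling will not suffice. What I would try first is to iterate: alternate the ``dense pair'' extraction of Step 1 with sampling at rate about $t/d$ at each level, rescaling $t$ each time. After a bounded number of rounds, depending on $k$ and $c$, the effective scale $t$ becomes $O(1)$, so the graph is $K_{s,s}$-free for bounded $s$. At that point I would invoke the bounded-$s$ case, i.e.\ that a $K_{s,s}$-free graph of large average degree contains an induced $C_4$-free bipartite subgraph of large average degree, in the style of Kwan--Letzter--Sudakov--Tran.
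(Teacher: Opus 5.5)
The paper does not prove this statement---it is quoted from Hunter, Milojevi\'c, Sudakov, and Tomon---so your plan has to stand on its own, and as written it does not. Steps 1 and 2 treat $(c,t)$-sparseness as sparsity at scale $d$. In fact it only forbids density above $1-c$ between sets of size at least $t$, and it is compatible with density $\tfrac12$ everywhere. For fixed $c<\tfrac12$, the random graph $G(n,\tfrac12)$ is with high probability $(c,O_c(\log n))$-sparse and has average degree about $n/2\gg Ct$. Yet every set of size $\gg\log n$, and every pair of such sets, has density $\tfrac12+o(1)$. Hence for any $X,Y$ of size of order $d$, one of $G[X],G[Y]$ has maximum degree at least $(\tfrac12-o(1))\max(\abs{X},\abs{Y})$, while $G[X,Y]$ has average degree at most $(1+o(1))\min(\abs{X},\abs{Y})$. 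Cross degrees therefore exceed internal ones by a factor of at most $2+o(1)$, not $10$, so Step 1 is unattainable and the bound $\eta D/2$ in Step 2 has no basis. The same example refutes your codegree claim: every pair of vertices has codegree about $n/4\gg t$, although no two $t$-sets span a $(1-c)$-dense pair (a $K_{2,t}$ is not a dense pair of $t$-sets). So the obstacle is not ``moderate'' codegrees. Codegrees can be linear in $d$, there are $\Theta(d^4)$ four-cycles between $X$ and $Y$, and sampling at rate $D/d$ leaves $\Theta(D^4)$ of them against $\Theta(D^2)$ edges.

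Everything therefore rests on your fallback iteration, which is not sound either. Its premise is that sampling at rate $p$ turns the scale $t$ into about $pt$, and this is false. Take $t$ even and a large $C_4$-free bipartite graph of average degree at least $2C$, and blow up each vertex into an independent set of size $t/2$. The result is $(\tfrac14,t)$-sparse with average degree at least $Ct$, yet for any fixed $p$ a $p$-sample contains $K_{t/2,t/2}$ with high probability once the graph is large enough. So the scale need not drop at all without a localisation step, and the only one you offer is Step 1.

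Even granting the premise, when $d$ is close to $Ct$ your rate $t/d$ is about $1/C$. Each round then divides $t$ by only $C$ and leaves $d/t$ unchanged, so reaching $t=O(1)$ takes about $\log_C t$ rounds, not boundedly many. Any constant-factor loss per round (such as the factor $\eta$ from Step 1) then compounds into a loss depending on $t$, which the statement does not allow.

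Finally, in random-like graphs a sample can be $K_{s,s}$-free with $s$ bounded only if it has $2^{O(s)}$ vertices, which caps its average degree at $2^{O(s)}$. The closing appeal therefore needs the $K_{s,s}$-free theorem with a threshold subexponential in $s$, not a qualitative bounded-$s$ statement. (A polynomial threshold can be extracted from the Gir\~ao--Hunter theorem combined with the Du--McCarty characterisation quoted in the paper.)

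What is missing is an actual mechanism that converts scale-$t$ sparseness, at average degree only linear in $t$, into an induced $K_{s,s}$-free subgraph whose average degree beats a polynomial in $s$. That is the substance of the cited theorem, and the proposal does not supply it.
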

	
	As mentioned, a hereditary class $\mac G$ is degree-bounded if and only if all of its bipartite $C_4$-free graphs have bounded average degree.
	Therefore \zcref{thm:reduce-to-c4free} implies that if $\mac G$ is degree-bounded then it is mildly sparse.
	As far as we know, the proof of \zcref{thm:reduce-to-c4free} in~\cite{HMST2025a} shows that $C$ can be taken as a polynomial in $1/c$ of degree $k^{O(k)}$; and so degree-boundedness actually implies polynomial mild sparseness.
	(However, combining this with \zcref{thm:treedeg} would not imply the dependence $C=c^{-2^{O(\abs T)}}$ in \zcref{thm:fnp25} that came from the proof of~\cite[Theorem 1.7]{FNP2025}, as mentioned in \zcref{sec:intro}.)
	In summary, we have the following extension of \zcref{thm:polydeg}, which  implies that \zcref{conj:c4chi}
	and \zcref{conj:cmpchi} for $m=2$ hold for $\chi$-bounded classes that are also degree-bounded:
	
	\begin{corollary}
		\label{cor:degbddctsparse}
		Let $\mac G$ be a hereditary class. Then the following are equivalent:
		\begin{itemize}
			\item $\mac G$ is degree-bounded;
			
			\item $\mac G$ is polynomially degree-bounded;
			
			\item $\mac G$ is mildly sparse; and
			
			\item $\mac G$ is polynomially mildly sparse.
		\end{itemize}
	\end{corollary}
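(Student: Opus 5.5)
We prove the four bullets equivalent by establishing the cycle of implications
\[
\text{poly.\ mildly sparse}\Rightarrow\text{mildly sparse}\Rightarrow\text{degree-bounded}\Rightarrow\text{poly.\ degree-bounded}\Rightarrow\text{poly.\ mildly sparse}.
\]
The first implication is trivial.

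The third implication is exactly \zcref{thm:polydeg}.

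For the second, suppose $\mathcal G$ is mildly sparse with constants $C(c)$. Let $s\ge1$ and let $G\in\mathcal G$ have no $K_{s,s}$ subgraph. We may assume $\abs{G}\ge s$. We would like to show $G$ is $(c,t)$-sparse with $t$ and $c$ depending only on $s$. If $G$ were not, there would be $A,B$ with $\abs A,\abs B\ge t$ and $e_G(A,B)>(1-c)\abs A\abs B$.

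Take $t:=2s$ and $c:=1/(4s^2)$. Counting non-edges, fewer than $c\abs A\abs B$ pairs of $A\times B$ are non-adjacent or equal; here pairs $(v,v)$ with $v\in A\cap B$ count as missing, since they never form edges. We then want disjoint $s$-sets $A'\subset A$, $B'\subset B$ that are complete to each other. To find them, choose uniformly random $s$-subsets of $A$ and $B$. The probability that some pair among the $s^2$ chosen pairs is missing is at most about $s^2c(1+o(1))<1$. Here I use $\abs A,\abs B\ge 2s$ so that sampling without replacement distorts the density by at most a constant factor; if needed, adjust $c$ by a constant.

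A fully complete $s\times s$ pair with no diagonal pairs gives disjoint sets, hence a $K_{s,s}$ subgraph, which is a contradiction. So $G$ is $(c,2s)$-sparse, and therefore has average degree less than $2sC(1/(4s^2))$, a function of $s$ only.

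The main work is the fourth implication: polynomially degree-bounded implies polynomially mildly sparse. By \zcref{thm:reduce-to-c4free}, for each $k$ and $c$ there is $C(k,c)$, polynomial in $1/c$ with degree $k^{O(k)}$, such that every $(c,t)$-sparse graph of average degree at least $C(k,c)t$ contains a bipartite $C_4$-free induced subgraph of average degree at least $k$.

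Now use that $\mathcal G$ is degree-bounded with function $g$. A bipartite $C_4$-free graph has no $K_{2,2}$ subgraph, so every bipartite $C_4$-free graph in $\mathcal G$ has average degree at most $g(2)$. This is also \cite[Corollary 2.8]{DM2025a}. Set $k:=\lfloor g(2)\rfloor+1$, a constant depending only on $\mathcal G$. Then for every $(c,t)$-sparse $G\in\mathcal G$, the hereditary property rules out the induced subgraph, so $G$ has average degree less than $C(k,c)t$. This gives mild sparseness with $C(\mathcal G,c)=C(k,c)$, polynomial in $1/c$ since $k$ is fixed.

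The expected obstacle is that the polynomial dependence on $1/c$ in \zcref{thm:reduce-to-c4free} is not stated in the theorem itself, only remarked on from the proof in \cite{HMST2025a}. I would cite that remark for the polynomial form. Note that the degree-bounded-to-mildly-sparse direction needs no polynomiality at all, so only this remark carries the polynomial claim.
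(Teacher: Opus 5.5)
Your proposal is correct and takes essentially the same approach as the paper. Mild sparseness gives degree-boundedness by choosing $c=t^{-2}$; your $t=2s$, $c=1/(4s^2)$ is exactly this, and your sampling step is sound as written, since each sampled pair is exactly uniform on $A\times B$, so the expected number of missing pairs in $A'\times B'$ is below $s^2c=1/4$ and $c$ needs no adjustment. The remaining steps match the paper: \zcref{thm:polydeg} supplies polynomiality, and \zcref{thm:reduce-to-c4free} combined with the bound $g(2)$ on bipartite $C_4$-free graphs in $\mathcal G$ gives mild sparseness, with the polynomial dependence on $1/c$ resting, as in the paper, on the remark about the proof of that theorem.
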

	
	As promised in \zcref{sec:intro}, let us now see how \zcref{thm:fnp25} implies \zcref{thm:treekss}.
	We begin with the following lemma.
	
	\begin{lemma}
		\label{lem:kssparse}
		Let $s\ge2$ be an integer. Every $K_{s,s}$-free graph $G$ is $(1/(4s),\,2R(s,\omega(G)+1))$-sparse.
	\end{lemma}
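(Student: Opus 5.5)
We argue by contraposition: given $A,B\subset V(G)$ with $\abs A,\abs B\ge t:=2R(s,\omega(G)+1)$ and $e_G(A,B)>(1-c)\abs A\abs B$, where $c:=1/(4s)$, we build an induced $K_{s,s}$ in $G$. Write $R:=R(s,\omega(G)+1)$. The idea is to find a stable set of size $s$ in $A$ whose vertices all see almost all of $B$. Its common neighbourhood in $B$ is then still large, so it contains a second stable set of size $s$. The two stable sets are complete to each other, which gives the induced $K_{s,s}$.

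\emph{Step 1 (Markov).} Count the ``non-edge'' pairs $(a,b)\in A\times B$ with $ab\notin E(G)$. This includes pairs $(a,a)$ with $a\in A\cap B$, since a vertex is not in its own neighbourhood. By hypothesis there are fewer than $c\abs A\abs B$ such pairs.

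Let $A'$ be the set of $a\in A$ with $\abs{B\setminus N_G(a)}\le 2c\abs B$. Each vertex of $A\setminus A'$ contributes more than $2c\abs B$ non-edge pairs. Hence $\abs{A\setminus A'}<\abs A/2$, and so $\abs{A'}>\abs A/2\ge R$.

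\emph{Step 2 (first stable set).} Since $\abs{A'}\ge R=R(s,\omega(G)+1)$ and $G$ has no clique of size $\omega(G)+1$, the set $A'$ contains a stable set $S$ of size $s$.

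\emph{Step 3 (common neighbourhood).} Put $B':=B\cap\bigcap_{v\in S}N_G(v)$. Each $v\in S$ misses at most $2c\abs B$ vertices of $B$, so
\[\abs{B'}\ge \abs B-s\cdot 2c\abs B=\abs B/2\ge R.\]
Vertices of $S$ that happen to lie in $B$ are automatically excluded from $B'$, because $v\notin N_G(v)$. Thus $B'\cap S=\emptyset$.

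\emph{Step 4 (second stable set).} Again by the Ramsey bound, $B'$ contains a stable set $S'$ of size $s$. Then $S$ and $S'$ are disjoint stable sets of size $s$ with $S$ complete to $S'$. Hence $G[S\cup S']$ is isomorphic to $K_{s,s}$, contradicting $K_{s,s}$-freeness.

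The only point needing care is the bookkeeping of $A\cap B$ in the definition of $e_G(A,B)$. Treating $(a,a)$ as a non-edge pair handles this: it only helps the Markov count, and it also guarantees disjointness of $S$ and $S'$. The constants are chosen exactly so that $2c\cdot s=1/2$ and $t/2=R$, so no real obstacle remains.
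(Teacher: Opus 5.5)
Your proposal is correct and is essentially the paper's proof. The paper runs the same averaging argument with the roles of $A$ and $B$ interchanged: it finds $B^*\subset B$ of size more than $\abs B/2$ whose vertices see at least $(1-2c)\abs A$ of $A$, counting edges rather than non-edges. It then takes a stable $s$-set in $B^*$, observes that its common neighbourhood in $A$ has size at least $\abs A/2\ge R$, and extracts a second stable $s$-set there, handling disjointness exactly as you do.
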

	
	\begin{proof}
		Let $\omega:=\omega(G)$, $c:=1/(4s)$, and $t:=2R(s,\omega+1)$. Assume for contradiction that $G$ is not $(c,t)$-sparse. Then there exist $A,B\subset V(G)$ with $\abs A,\abs B\ge t$ and
		\begin{equation}
			\label{eq:dense}
			e_G(A,B)>(1-c)\abs A\abs B.
		\end{equation}
		We will produce an induced $K_{s,s}$ in~$G$, contradicting the hypothesis.
		
		Let $B^*:=\{b\in B:\abs{N_G(b)\cap A}\ge(1-2c)\abs A\}$. Counting ordered pairs gives
		\[(1-c)\abs A\abs B<e_G(A,B)=\sum_{b\in B}\abs{N_G(b)\cap A}\le\abs{B^*}\cdot\abs A+(\abs B-\abs{B^*})\cdot(1-2c)\abs A,\]
		which rearranges to $\abs{B^*}>\abs B/2\ge t/2=R(s,\omega+1)$.
		
		Since $G$ is $K_{\omega+1}$-free, so is $G[B^*]$; hence Ramsey's theorem yields a stable set $Y\subset B^*$ with $\abs Y=s$. By the defining property of~$B^*$, each $b\in Y$ has at most $2c\abs A$ non-neighbours in~$A$, and so
		\[A^*:=\bigcap_{b\in Y}(N_G(b)\cap A)\quad\text{satisfies}\quad\abs{A^*}\ge \abs A-s\cdot 2c\abs A=(1-2cs)\abs A=\tfrac12\abs A\ge R(s,\omega+1).\]
		Again, $G[A^*]$ is $K_{\omega+1}$-free, so Ramsey's theorem yields a stable set $X\subset A^*$ with $\abs X=s$. By construction, every vertex in~$X$ is adjacent to every vertex in~$Y$, and $X\cap Y=\emptyset$ because $X\subset A^*\subset N_G(b)$ while $b\in Y$ is not a neighbour of itself. Therefore $G[X\cup Y]$ is isomorphic to~$K_{s,s}$, an induced copy in~$G$, a contradiction.
		This proves \zcref{lem:kssparse}.
	\end{proof}
	
	We now apply \zcref{lem:kssparse} to deduce \zcref{thm:treekss} from \zcref{thm:fnp25} in the more general setting of mild sparseness, as follows.
	\begin{proposition}\label{prop:avgdeg}
		Let $\mac G$ be a mildly sparse class of graphs. Then for every integer $s\ge1$, there exists an integer $C=C(\mac G,s)\ge1$ such that every $K_{s,s}$-free graph $G\in\mac G$ has average degree less than $C\cdot R(s,\omega(G)+1)$.
		In particular, if $\mac G$ is, in addition, hereditary, then every $K_{s,s}$-free graph $G\in\mac G$ has chromatic number at most $C\cdot R(s,\omega(G)+1)$.
	\end{proposition}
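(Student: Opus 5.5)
The plan is to combine \zcref{lem:kssparse} with the definition of mild sparseness. There are two cases, by the value of $s$, followed by a standard degeneracy argument for the colouring statement.

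\textbf{The average-degree bound.} First suppose $s\ge2$. Let $G\in\mac G$ be $K_{s,s}$-free, and put $t:=2R(s,\omega(G)+1)$. By \zcref{lem:kssparse}, $G$ is $(1/(4s),t)$-sparse. Because $\mac G$ is mildly sparse, applying the definition with $c=1/(4s)$ gives a constant $C_0=C(\mac G,1/(4s))$, which depends only on $\mac G$ and $s$. We conclude that $G$ has average degree less than $C_0 t=2C_0R(s,\omega(G)+1)$, so $C:=\lceil 2C_0\rceil$ works.

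Now suppose $s=1$. Here $K_{1,1}=K_2$, so a $K_{1,1}$-free graph is edgeless and has average degree $0<R(1,\omega(G)+1)=1$. Any $C\ge1$ works in this case. (Note that \zcref{lem:kssparse} is stated only for $s\ge2$, which is why this case is handled separately.)

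\textbf{The chromatic bound.} Assume in addition that $\mac G$ is hereditary. Every induced subgraph $G'$ of a $K_{s,s}$-free graph $G\in\mac G$ is again in $\mac G$ and $K_{s,s}$-free. Also $\omega(G')\le\omega(G)$, and $R(s,\cdot)$ is monotone. Hence $G'$ has average degree less than $C\cdot R(s,\omega(G)+1)$, and so $G'$ has a vertex of degree less than $C\cdot R(s,\omega(G)+1)$. It follows that $G$ is $(\lceil C R\rceil-1)$-degenerate, where $R=R(s,\omega(G)+1)$. Greedy colouring along a degeneracy ordering then gives $\chi(G)\le \lceil CR\rceil\le C\cdot R$, since $C$ and $R$ are integers.

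There is no real obstacle. The only points needing care are that the constant from mild sparseness depends only on $c$, hence only on $s$, and not on $t$; and that one checks the monotonicity of $R$ in the degeneracy step.
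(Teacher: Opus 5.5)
Your proposal is correct and follows the paper's proof essentially line by line: you treat $s=1$ separately, then combine the lemma that $K_{s,s}$-free graphs are $(1/(4s),2R(s,\omega(G)+1))$-sparse with mild sparseness at $c=1/(4s)$ to take $C=\lceil 2C'\rceil$, and you finish with a degeneracy argument in the hereditary case. Your explicit use of $\omega(G')\le\omega(G)$ and the monotonicity of $R(s,\cdot)$, to justify applying the bound to induced subgraphs, is a detail the paper leaves implicit.
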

	
	\begin{proof}
		For $s=1$, every $K_{s,s}$-free graph is edgeless, so $\de(G)=0$ and we may take $C=1$; hence assume $s\ge2$.
		Set $c_s:=1/(4s)$; then since $\mac G$ is mildly sparse, there exists $C'\ge1$ such that for every integer $t\ge1$ and every $G\in\mac G$, if $G$ is $(c_s,t)$-sparse then it has average degree less than $C't$.
		
		We claim that $C:=\ceil{2C'}$ suffices. To this end, let $G\in\mac G$ be $K_{s,s}$-free, and let $t:=2R(s,\omega(G)+1)$; then \zcref{lem:kssparse} implies that $G$ is $(c_s,t)$-sparse. Then $G$ has average degree less than $C't\le C\cdot R(s,\omega(G)+1)$, as required.
		
		To see the final implication, observe that every $K_{s,s}$-free graph $G\in\mac G$ has minimum degree less than $C\cdot R(s,\omega(G)+1)$, and so has degeneracy less than $C\cdot R(s,\omega(G)+1)$ since $\mac G$ is hereditary. This proves \zcref{prop:avgdeg}. 
	\end{proof}
	
	Another interesting application of \zcref{prop:avgdeg} is obtained by combining it with \zcref{thm:subsdeg,cor:degbddctsparse,lem:kssparse}.
	\begin{theorem}\label{thm:inducedsub}
		For every graph~$H$ and every integer $s\ge1$, 
		there exists $C=C(H,s)\ge1$ such that 
		every $K_{s,s}$-free graph $G$ with no induced subdivision of~$H$
		has chromatic number at most $C\cdot R(s,\omega(G)+1)$.
	\end{theorem}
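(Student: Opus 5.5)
The plan is to realise \zcref{thm:inducedsub} as an instance of \zcref{prop:avgdeg}. Fix $H$ and $s$, and let $\mac G_H$ be the class of all graphs with no induced subdivision of $H$. This class is hereditary: an induced subgraph of an induced subgraph is again induced, so excluding induced subdivisions of $H$ passes to induced subgraphs. By \zcref{thm:subsdeg}, $\mac G_H$ is degree-bounded. Then \zcref{cor:degbddctsparse} shows that $\mac G_H$ is mildly sparse.

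The direction of \zcref{cor:degbddctsparse} used here is the one relying on \zcref{thm:reduce-to-c4free}. We need to check it in the form required. Fix $c>0$, and let $g$ be the degree-bounding function of $\mac G_H$. Every bipartite $C_4$-free graph $B\in\mac G_H$ contains no $K_{2,2}$ subgraph, so its average degree is at most $g(2)$. Now apply \zcref{thm:reduce-to-c4free} with $k:=\floor{g(2)}+1$ to get $C(k,c)$. Suppose a $(c,t)$-sparse $G\in\mac G_H$ had average degree at least $C(k,c)\,t$. Then $G$ would contain a bipartite $C_4$-free induced subgraph of average degree at least $k>g(2)$. Since $\mac G_H$ is hereditary, that subgraph lies in $\mac G_H$, which is a contradiction. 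So $\mac G_H$ is mildly sparse with constant $C(\mac G_H,c):=C(k,c)$.

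Next, apply \zcref{prop:avgdeg} to $\mac G_H$. Its first part uses \zcref{lem:kssparse}: every $K_{s,s}$-free $G$ is $(1/(4s),2R(s,\omega(G)+1))$-sparse. Hence every $K_{s,s}$-free $G\in\mac G_H$ has average degree less than $C\cdot R(s,\omega(G)+1)$. Because $\mac G_H$ is hereditary, the "in particular" clause of \zcref{prop:avgdeg} upgrades this to a bound on degeneracy, and so to $\chi(G)\le C\cdot R(s,\omega(G)+1)$. The case $s=1$ is trivial, since such graphs are edgeless and $\chi\le1=R(1,\cdot)$.

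I do not expect a real obstacle. Once the cited theorems are granted the argument is a composition. The only points to watch are that $K_{s,s}$-freeness is meant in the induced sense, which is exactly what \zcref{lem:kssparse} uses, and that heredity of $\mac G_H$ is invoked both in passing to the bipartite $C_4$-free induced subgraph and in converting the average-degree bound into a colouring bound.
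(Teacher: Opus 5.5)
Your proposal is correct and follows the same route as the paper. The paper obtains \zcref{thm:inducedsub} by combining \zcref{thm:subsdeg} (degree-boundedness of the hereditary class), \zcref{cor:degbddctsparse} (mild sparseness, via \zcref{thm:reduce-to-c4free}, whose use you unpack correctly), \zcref{lem:kssparse}, and \zcref{prop:avgdeg}, exactly as you do.
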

	By \zcref{thm:inducedsub}, the $C_4$-free subclass of the class of graphs excluding all induced subdivisions of any given graph $H$ is linearly $\chi$-bounded.
	By applying this fact with $H=K_{2,3}$ and observing that every subdivision of $K_{2,3}$ contains an induced cycle of even length, we obtain another proof that the class of even-hole-free graphs is linearly $\chi$-bounded, which is implied by the result of Chudnovsky and Seymour~\cite{CS2019} mentioned in \zcref{sec:intro}.
	
	\section{Growing trees in $(c,t)$-sparse graphs}
	\label{sec:sparsetree}
	This section deals with \zcref{thm:sparse-tree}.
	To make the presentation clear, we will treat the case $G=\Gamma$ only; then the proof of the general case can be modified in the obvious way by making sure that `adjacent' means `adjacent in $G$' and `nonadjacent' means `nonadjacent in $\Gamma$' (we omit the details).
	Hence, our goal in this section is to prove the following.
	\begin{theorem}
		\label{thm:sparsetree}
		Let $F$ be a tree of radius $h\ge0$, let $c\in(0,\frac12)$, and let $t\ge1$ be an integer. Then every $(c,t)$-sparse graph $G$ with average degree at least $(4/c)^{4h\abs F}t$ contains an induced $F$.
	\end{theorem}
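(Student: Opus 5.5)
Since $G$ has average degree at least $D:=(4/c)^{4h\abs F}t$, I first pass to an induced subgraph $G_0$ of minimum degree at least $D/2$; this is still $(c,t)$-sparse. Sparseness enters through one \emph{spreading} lemma. If $A,B\subset V(G_0)$ with $\abs B\ge t$ and every vertex of $A$ has at least $t$ neighbours in $B$, then for any $v\in A$ and any set $X\subset B$ with $\abs X\ge t$, the non-neighbours of a typical vertex are plentiful. Concretely, the set of vertices $u$ (in a set of size $\ge t$) with more than $(1-c/2)\abs X$ neighbours in $X$ has fewer than $t$ elements. Otherwise that set together with $X$ would contradict $(c,t)$-sparseness. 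So all but fewer than $t$ vertices miss at least a $c/2$ fraction of any given large set.

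I would root $F$ at a centre $r$, so every vertex is within depth $h$. I would embed $F$ level by level, but not directly. Following the hint about locally injective homomorphisms, I first build a large \emph{multi-level structure} in $G_0$. It consists of sets $V_0,V_1,\dots,V_h$ where each vertex of $V_i$ ($i<h$) has many, at least $d_i$, neighbours in $V_{i+1}$, and $d_0\gg d_1\gg\dots\gg d_h\ge t$ decrease by factors of roughly $(4/c)^{4\abs F}$. Such a structure is a ``blown-up'' locally injective homomorphic image of the infinite $\Delta$-ary tree of depth $h$. I would obtain it by repeatedly deleting vertices with few neighbours in the next level, using minimum degree $D/2$ and a standard averaging/cleaning argument that loses a factor at most $(4/c)^{O(\abs F)}$ per level.

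Next I embed the vertices of $F$ greedily in BFS order. When embedding a child $y$ of an already-embedded vertex $x$ (image in $V_i$), I choose its image among the $\ge d_i$ neighbours of $\phi(x)$ in $V_{i+1}$. That image must be nonadjacent to all previously embedded vertices other than $\phi(x)$, and its own forward neighbourhood must remain large after deleting neighbours of earlier vertices. Each previously embedded vertex $z$ spoils candidates in two ways. First, by being adjacent to them: by spreading, the set of candidates fully adjacent to $z$ is controlled, and more robustly I maintain for each future slot a candidate set $X$ that is kept anticomplete to all embedded vertices, shrinking by a factor $c/2$ per embedded vertex via the lemma while discarding fewer than $t$ bad vertices. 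Second, by having too many neighbours in a candidate's forward set. Since at most $\abs F$ vertices get embedded, the candidate sets shrink by at most $(c/2)^{\abs F}$ per level, costing a total factor of $(4/c)^{O(h\abs F)}$. This matches the exponent $4h\abs F$ and is why the degree sequence was chosen that way.

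The main obstacle is ensuring that sets stay of size at least $t$ so the sparseness lemma applies at every stage. The forward neighbourhoods must be pruned simultaneously for all not-yet-embedded descendants, because choosing $\phi(y)$ must preserve large anticomplete candidate sets for $y$'s entire future subtree. I would handle this by reserving, for each embedded vertex, a candidate set of size $\ge d_{i}(c/4)^{\abs F}$ inside its forward neighbourhood. When choosing each new vertex, I pick it outside the fewer-than-$t$ exceptional vertices for each of the at most $\abs F$ reserved sets. That is possible since $d_i(c/4)^{\abs F}>\abs F t$, and it keeps every reserved set shrinking by at most a $(1-c/2)$ factor. The final check is that $d_h(c/4)^{\abs F}\ge t$ and that the losses multiply out within $(4/c)^{4h\abs F}$.
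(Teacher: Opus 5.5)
Your argument has a genuine gap at the step where a vertex $w$ has just been embedded as the image of a child $y$ of $x$. At that point you must reserve a large set $X_w\subseteq N(w)$ that is anticomplete to all earlier images, in particular to $\phi(x)$. Nothing in your setup guarantees that $N(w)\setminus N[\phi(x)]$ is nonempty, let alone large.

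Your spreading lemma points the wrong way here. It says that for a \emph{fixed} set $X$, fewer than $t$ vertices are almost complete to $X$; that is what lets a new vertex avoid wrecking sets already reserved. But here $\phi(x)$ was fixed first and the set $N(w)$ varies with $w$. Since $(c,t)$-sparseness only bounds edges from above, it cannot force $w$ to have neighbours outside $N(\phi(x))$.

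Concretely, let $G$ be a vertex $v$ joined to every vertex of a $d$-regular graph $H$ of girth at least $5$, with $d\ge(4/c)^{4h\abs F}t$. The K\H{o}v\'ari--S\'os--Tur\'an count gives $e_G(A,B)\le(t^{-1/2}+3/t)\abs A\abs B$ whenever $\abs A,\abs B\ge t$, so $G$ is $(c,t)$-sparse for $t\ge16$, and its minimum degree is $d$. Yet every $w\in N(v)$ satisfies $N(w)\subseteq N[v]$. So once a vertex of $F$ that has a grandchild is mapped to $v$, the embedding cannot continue.

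Your levels $V_0,\dots,V_h$ cannot exclude this, because they record only degrees. Taking $V_i:=V(G_0)$ and $d_i:=D/2$ for all $i$ already satisfies your conditions, so the ``cleaning'' does no work. Nothing in your rule for choosing images prevents meeting such a vertex at a deeper level either.

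The missing idea is a host structure in which every root-to-leaf path is \emph{already} induced. That is exactly what the paper's $(d,h)$-skeletons are, and constructing one is the heart of the proof. The paper builds it bottom-up by adding new roots (\zcref{lem:build-skeleton}, iterated in \zcref{lem:skeleton}), via a dichotomy:
\begin{itemize}
\item either deleting the roots of all $((4/c)^{h+1}d,h)$-skeletons costs little density, and one recurses in a subgraph with no such skeleton;
\item or these roots carry many edges, and a double count over pairs $(u,v)$ gives a new root. Here $u\in N(v)$ is \emph{nice} for a skeleton $\varphi_v$ rooted at $v$, meaning adjacent to $v$ but to nothing else in a large subtree. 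The count yields a vertex $u$ nice for $\lceil d\rceil$ such skeletons, and $u$ becomes the new root.
\end{itemize}
Only then does a top-down greedy like yours work (\zcref{lem:turn}). There the object reserved for an unfinished vertex is a whole wide subtree of the skeleton, cleaned recursively through \zcref{lem:sparse-shrink}, so the parent problem never arises. With one-level reserved sets, as in your proposal, the same problem recurs at every level.
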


	To prove this result, we need several definitions.
	In what follows, a \emph{rooted tree} is a pair $(T,r)$ where $T$ is a tree and $r\in V(T)$; and we call $r$ the \emph{root} of $(T,r)$.
	The \emph{height} of a vertex in $(T,r)$ is its distance to $r$ in $T$; and the \emph{depth} of $(T,r)$ is the largest $h\ge0$ such that $(T,r)$ has a vertex of height~$h$.
	For $u,v\in V(T)$, we say that $v$ is a \emph{child} of $u$ and $u$ is the \emph{parent} of $v$ if $uv\in E(T)$ and the height of~$u$ is one less than the height of $v$, and that $v$ is a \emph{descendant} of $u$ if $u$ lies in the unique path from~$v$ to $r$ in $T$.
	An \emph{internal vertex} of $(T,r)$ is a vertex having children in $(T,r)$; and a \emph{leaf} of $(T,r)$ is a non-root vertex with no children.
	For a real number $d\ge0$ and an integer $h\ge0$, 
	we say that $(T,r)$ is \emph{$(d,h)$-wide} if its depth is $h$ and every vertex of height less than $h$ in $(T,r)$ has at least $d$ children.

	A homomorphism $\varphi$ from a graph $H$ to a graph $G$ is \emph{locally injective} if $\varphi\vert_{N_H(v)}$ is injective for all~$v\in V(H)$ (see~\cite{FK2008} for a survey on locally injective homomorphisms and related concepts). 
	For a rooted tree $(T,r)$, a \emph{skeleton} from $(T,r)$ to~$G$ is a locally injective homomorphism $\varphi$ from~$T$ to~$G$ such that 
	$\varphi$ maps 
	every root-to-leaf path of $(T,r)$ into an induced path in $G$; and it is called a \emph{$(d,h)$-skeleton}
	if $(T,r)$ is $(d,h)$-wide.
	We also say that a \emph{skeleton in $G$} is a skeleton from some rooted tree~$(T,r)$ to~$G$.
	If $r$ is a root of $T$, 
	then we call $\varphi(r)$ the \emph{root} of $\varphi(T)$.
	
	Skeletons are more relaxed than path-induced trees, which are fully injective homomorphisms with a rooted tree domain that map root-to-leaf paths to induced paths (see~\cite{DY2024,SSS2023,NSS2024a} for recent papers concerning path-induced trees).
	To explain, skeletons only require the neighbourhood of each vertex in the domain to be mapped injectively (hence `locally injective') and do not require vertices from different branches to have different images.
	This allows for more flexibility in our construction of induced trees in $(c,t)$-sparse graphs, which is conveniently divided into two steps:
	\begin{itemize}
		\item first, we will construct a sufficiently wide skeleton from a $(c,t)$-sparse graph with large average degree; and
		
		\item second, we will prove that every $(c,t)$-sparse graph with a sufficiently wide skeleton contains a copy of any given induced tree.
	\end{itemize}
	
	In the first step, our construction will be done by adding new \emph{roots} to existing skeletons to build a new one with larger depth; this explains why our argument differs significantly from the proof of~\cite[Theorem 1.7]{FNP2025} where trees are constructed by adding one \emph{leaf} at a time.
	In the second step, we use the $(c,t)$-sparseness condition to `clean' the overlaps and the cross-edges between images of different branches of the rooted tree domain.
	We are hopeful that locally injective homomorphisms (and skeletons in particular) will find more applications in $\chi$-boundedness and induced subgraphs.
	
	Now, in order to formalise the first step mentioned above, we define, for every $c\in(0,\frac12)$, the following function
	\[
	\Phi(c,0):=0,\quad \Phi(c,1):=1
	\]
	and
	\[
	\Phi(c,h+1):=(\Phi(c,h)+2)(4/c)^{h+1}
	\quad\text{for all }h\ge 1.
	\]
	By writing $\eps:=c/4$, the above recurrence unfolds to
	\[
	\Phi(c,h)
	=
	\eps^{-\binom{h+1}{2}}
	\left(\eps+
	2\sum_{i=2}^{h} \eps^{{i\choose2}}\right)
	\quad
	\text{for all $h\ge 1$,}
	\]
	where the empty sum is interpreted as~$0$.
	Since $\eps\in(0,\frac18)$, this yields
	\[\Phi(c,h)\le 4\eps^{1-\binom{h+1}{2}}=c(4/c)^{\binom{h+1}{2}}
	\quad\text{for all $h\ge1$.}\]
	For notational convenience, for a graph $G$ let $\de(G):=\abs{E(G)}/\abs G$ be one-half of the average degree of~$G$.
	Then our skeleton construction is done by the following lemma, which is proved in \zcref{subsec:skeleton}.
	
	\begin{lemma}
		\label{lem:skeleton}
		Let $h\ge0$ and $t\ge1$ be integers, let $d\ge t$ be real, and let $c\in(0,\frac12)$. 
		Then every $(c,t)$-sparse graph $G$ with $\de(G)\ge \Phi(c,h)\cdot d$ contains a $(d,h)$-skeleton.
	\end{lemma}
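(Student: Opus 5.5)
We argue by induction on $h$, following the paper's plan of adding new \emph{roots} to existing skeletons. The recurrence for $\Phi$ suggests the intended structure. The case $h=0$ is trivial, since a single vertex is a $(d,0)$-skeleton. For $h=1$ we need $\de(G)\ge d$, which gives a vertex of degree at least $2d\ge d$; mapping a star with $\lceil d\rceil$ leaves injectively onto this vertex and its neighbours is a $(d,1)$-skeleton, since root-to-leaf paths are single edges.

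For the step from $h$ to $h+1$, assume $\de(G)\ge(\Phi(c,h)+2)(4/c)^{h+1}d$. Set $d':=(4/c)^{h+1}d\ge t$, so that $\de(G)\ge(\Phi(c,h)+2)d'$. The idea is to find many vertex roots of $(d',h)$-skeletons and then attach a new root above them.

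\textbf{Step 1 (many skeleton roots).} Repeatedly apply the induction hypothesis with parameter $d'$ to the graph obtained by deleting $2d'|G|$ worth of edges. Concretely, let $X$ be the set of vertices $v$ that are roots of a $(d',h)$-skeleton in $G$. We want to show that the edges incident to $X$ carry most of the density. Suppose $G\setminus X$, or more usefully the graph of edges not touching $X$, still had $\de\ge\Phi(c,h)d'$. Then it would contain a $(d',h)$-skeleton whose root lies outside $X$, which is a contradiction. So at least $2d'|G|$ edges meet $X$, and averaging gives a vertex $u$ with at least $2d'$ neighbours in $X$; alternatively one may pass to a subgraph of minimum degree $\de$ and use a bipartite argument. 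The cleaner version is to take a minimal subgraph $G'$ with $\de(G')\ge(\Phi(c,h)+2)d'$, so that every vertex has degree above $(\Phi+2)d'$. Then every vertex $v$ is a root of a skeleton inside $G'\setminus N[u]$-type subgraphs after suitable deletions.

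\textbf{Step 2 (attach the root while keeping paths induced).} Pick $u$ and a set $Y\subset N(u)$ of neighbours, each the root of a $(d',h)$-skeleton $\varphi_y$. We need to prune each $\varphi_y$ to a $(d,h)$-skeleton avoiding $N[u]$ except at its root. This guarantees that prepending $u$ keeps every root-to-leaf path induced, and local injectivity at $u$ follows from choosing distinct $y$. This pruning is where $(c,t)$-sparseness and the factor $(4/c)^{h+1}$ enter. Process the skeleton level by level from the top. At a vertex with at least $d'_i$ children's images, at most a $(1-c/2)$-ish fraction can lie in $N(u)$; otherwise, with $|A|\ge t$, taking $A$ to be the set of children and $B$ to be $\{u\}$ blown up is not allowed. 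The $(c,t)$-sparse condition needs $|B|\ge t$. So instead we apply sparseness to $A=$ the children images and $B=$ the set of $\ge t$ chosen roots $Y'$ (together with $u$'s relevant set). Among $t$ candidate roots, a vertex adjacent to a $\ge(1-c/2)$ fraction of the children for most roots would contradict $e(A,B)\le(1-c)|A||B|$. This yields a root $y$ and, at each level, the loss of only a factor $c/4$ in the number of children. Over $h+1$ levels this reduces $d'$ to $d$.

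\textbf{Main obstacle.} The main difficulty is Step 2: making the pruning simultaneous across all levels and all vertices of the skeleton. Note that skeleton images may repeat across branches, so we must prune the domain tree rather than the image. We also need enough candidate roots (at least $t$, indeed $d'\ge t$ of them) for sparseness to apply. I would handle this by a counting or averaging argument over the choices of $y\in Y$ with $|Y|\ge d'$. The key is to bound the bad pairs (tree vertex, root) where too many children of that vertex map into $N(u)\cup\{u\}$, and to use sparseness on $A=\varphi_y(\text{children})$ and $B=N(u)$ restricted to at least $t$ vertices. The additive $+2$ in the recurrence is spent guaranteeing that $u$ has degree at least $2d'$ into $X$, after removing the edges not incident to $X$.
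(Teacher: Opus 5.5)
Your proposal has a genuine gap in Step~2. Step~1 is fine and matches the first half of \zcref{lem:build-skeleton}: deleting the set $X$ of roots of $(d',h)$-skeletons must drop $\de$ by more than $2d'$, so edges meeting $X$ carry a lot of weight. The trouble is the order of quantifiers in Step~2. You fix the new root $u$ first, as any vertex with at least $2d'$ neighbours in $X$, and only then try to prune each $\varphi_y$ ($y\in N_G(u)\cap X$) away from $N_G[u]$. A single vertex is not constrained by $(c,t)$-sparseness at all. For instance, adding a universal vertex to a $(2c,t-1)$-sparse graph gives a $(c,t)$-sparse graph whenever $t\ge 2/c$. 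Such a $u$ has the most neighbours in $X$, so your averaging may select it. Yet no $(d,h)$-skeleton with $h\ge1$ rooted at a neighbour of $u$ can meet $N_G[u]$ only at its root. Your attempted repair applies sparseness to the children images of $\varphi_y$ against a set $Y'$ of \emph{old} roots. That bounds adjacency between children and other roots, and says nothing about adjacency to $u$. To obtain two sets of size at least $t$, the large set must consist of \emph{candidate new roots}. Your level-by-level pruning needs the same thing at every depth: a candidate set of size about $(4/c)^{h}t$, not one vertex.

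The missing idea, which is the paper's \zcref{claim:nice}, reverses the quantifiers. Take each skeleton root $v$ with $d_G(v)\ge\frac32W$, where $W=(4/c)^{h+1}d=d'$. Show that at least $\frac14\eps^h d_G(v)$ vertices are $2\eps^{h+1}$-nice for $\varphi_v$, as follows.
\begin{itemize}
\item Put $A:=\varphi_v(N_T(r))$ and $B:=N_G(v)\setminus A$. Both have size at least $t$, and $B$ avoids $\varphi_v(V(T))$ because root-to-leaf paths are induced.
\item \zcref{lem:sparse} gives $P\subset A$ with $\abs P\ge 2\eps\abs A$, each vertex of $P$ having many non-neighbours in $B$.
\item For each $z\in P$, \zcref{lem:sparse-shrink} with candidate set $B\setminus N_G(z)$ handles all deeper levels at once. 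This is where the per-level factor $c/4$ is really paid.
\item An auxiliary bipartite graph then yields the nice set $Q\subset B$.
\end{itemize}
Only afterwards is $u$ chosen, by double counting pairs $(u,v)$ with $u$ nice for $\varphi_v$. Since $\frac14\eps^h\sum_{v\in Y}d_G(v)>\frac1{8\eps}d\abs G> d\abs G$, some $u$ is nice for at least $\ceil d$ roots $v$, and these can be assembled under $u$. The additive $+2$ in $\Phi$ is spent here, by restricting to high-degree roots so that $B$ is large. It does not go into $u$'s degree into $X$.
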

	
	Next, the following lemma shows how to extract an induced tree from a sufficiently wide skeleton as described in the second step above, which is proved in \zcref{subsec:turn}.
	
	\begin{lemma}
		\label{lem:turn}
		Let $(F,\rho)$ be a rooted tree with depth $h\ge0$, let $c\in(0,\frac12)$, and let $t\ge1$ be an integer. Then every $(c,t)$-sparse graph $G$ with a $((4/c)^{3h\abs F}t,h)$-skeleton contains an induced $F$.
	\end{lemma}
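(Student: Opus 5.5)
Let $\varphi$ be a skeleton from a $(D,h)$-wide rooted tree $(T,r)$ to $G$, where $D:=(4/c)^{3h\abs F}t$. The plan is to embed $(F,\rho)$ top-down, level by level, with the root $\rho$ sent to $\varphi(r)$. Whenever a vertex $x$ of $F$ at height $j$ is embedded as $\varphi(u)$ for a vertex $u$ of $T$ at height $j$, its children in $F$ will be sent to images $\varphi(u')$ of children $u'$ of $u$. Then every edge of $F$ maps to an edge of $G$. Local injectivity makes children of a common parent distinct, and the fact that root-to-leaf paths map to induced paths handles adjacency and coincidence along a single branch.

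What remains is to ensure, for vertices in different branches, that
\begin{itemize}
\item their images are distinct, and
\item their images are nonadjacent, except for parent--child pairs.
\end{itemize}
For this, maintain for each embedded vertex $x$ (embedded at $u$) a large set $W_x$ of ``candidate'' children of $u$. Refine these candidate sets using $(c,t)$-sparseness: if two large vertex sets $A,B$ (each of size at least $t$) are such that every $a\in A$ has at least $(1-c)\abs B$ neighbours in $B$, this contradicts sparseness. Hence a sparse graph gives the following cleaning step. Given sets $A_1,\dots,A_m$ of size at least $Kt$, we can shrink them by a factor roughly $(c/4)^{O(1)}$ so that they become pairwise anticomplete. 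More precisely, a greedy or iterative argument finds a vertex set of controlled size with few neighbours in the others. Disjointness is also obtainable, by discarding shared vertices; alternatively, the image set $\varphi(\text{children of }u)$ is already injective, and we can choose representatives distinct from the finitely many (at most $\abs F$) vertices already used.

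Concretely, I would process $F$ in breadth-first order. At each step, for each already-embedded vertex that still needs children, keep a candidate pool of children-images. When choosing the image $y$ of a new vertex of $F$, pick it from its pool so that:
\begin{itemize}
\item $y$ avoids all previously chosen images;
\item $y$ is nonadjacent to all previously chosen images except its parent;
\item $y$'s own pool of $T$-children images (size at least $D$ times a shrinking factor) remains large;
\item the candidate pools of all other pending vertices keep a large fraction of their vertices that are nonadjacent to $y$.
\end{itemize}
The last requirement is where sparseness enters. If every candidate in a pool $A$ (of size at least $t$) were adjacent to most of another pool $B$, then $e(A,B)>(1-c)\abs A\abs B$, contradicting sparseness. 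Averaging then gives a vertex in $A$ whose non-neighbourhood covers at least a $c$-fraction of $B$. Iterating over at most $\abs F$ pools costs a factor about $(c/2)^{\abs F}$ per embedded vertex. Over $\abs F$ vertices and $h$ levels of nested pools, this should account for the budget $(4/c)^{3h\abs F}$.

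The main obstacle is handling adjacency between a newly chosen vertex and vertices deeper in other branches, which are not yet chosen and whose pools are only defined through the skeleton structure. The pool of grandchildren candidates depends on which child is picked. So instead of single pools, I would work with whole ``subskeletons'': for each candidate $y$, the $(D',h')$-wide subskeleton hanging below it. Sparseness must then be applied to the union of level-$i$ images of these subskeletons, to find choices nonadjacent to a large part of all remaining levels simultaneously. This requires a hereditary reinforcement of the cleaning lemma. Among any $(4/c)^{k}$-wide family of subskeletons, one can keep a $(c/4)$-fraction that is uniformly ``mostly nonadjacent'' to a fixed set, applying the sparseness condition level by level ($h$ levels) for each of the at most $\abs F$ previously embedded vertices. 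I expect getting these nested averaging steps to compose with the exponent $3h\abs F$ to be the technical heart of the proof.
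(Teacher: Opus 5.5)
\textbf{There is a genuine gap.} Your framework matches the paper's. You embed $(F,\rho)$ by an injective homomorphism $\psi\colon F\to T$ with $\psi(\rho)=r$. For each pending vertex $u$ you keep a wide subtree $T_u$ below $\psi(u)$ whose image, minus its root, is disjoint from and anticomplete to all images already used. You choose each new vertex from a large pool so that this invariant survives. But the step you defer as ``the technical heart'' is essentially the whole proof, and the way you sketch it would not work.

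After choosing $y$, the invariant needs, for each pending $u$, a subtree of $T_u$ that keeps a fixed fraction of the children of \emph{every} retained node and whose image avoids $N_G[y]$. Knowing that $y$ is non-adjacent to a large part of each level (your ``union of level-$i$ images'') does not give this. Suppose $y$ is non-adjacent exactly to a $c$-fraction $S$ of the level-$1$ images, and to the level-$2$ images lying below level-$1$ vertices outside $S$. Both level densities of non-neighbours are at least $c$. Yet every level-$1$ vertex you may keep lies in $S$, and all its children are adjacent to $y$.

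Several of your other cleaning steps also fail. Subtrees cannot be cleaned ``uniformly'' with respect to a set of candidates. Sets also cannot be made pairwise anticomplete by shrinking them by a constant factor, as in your first cleaning step. Indeed, $G(n,1/2)$ is $(c,t)$-sparse for every $t$ at least a suitable multiple of $\log n$, yet has no two anticomplete sets of size more than $3\log_2 n$. Finally, sparseness gives nothing against a single fixed vertex. So nothing can be cleaned with respect to previously embedded vertices after the fact; all cleaning must happen at the moment a vertex is selected from a pool of size at least $t$.

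\textbf{The missing idea} is a per-candidate notion of goodness: $y$ is $\eps$-good for a skeleton if \emph{some} $\eps$-subtree, depending on $y$, has image disjoint from $N_G[y]$. Together with this you need the key Lemma~\ref{lem:sparse-shrink}: any set $X$ of at least $(4/c)^h t$ vertices outside the image and non-adjacent to the root contains a $(c/4)^h$-fraction that is $2(c/4)^{h+1}$-good. It is proved by induction on depth, in three steps. First, apply sparseness between the children images $A$ and $X$ to find many children $v$ with $X\setminus N_G(v)$ large. Second, apply induction inside each such branch. Third, apply Lemma~\ref{lem:sparse}(2) not to $G$ but to the auxiliary bipartite graph joining each such $v$ to the candidates that are \emph{not} good for its branch. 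This converts ``each branch has many good candidates'' into ``many candidates are good for many branches'', which is exactly the per-node-fraction structure your level-wise densities cannot provide. Applying it once for each pending $u\neq v$ yields a pool $Q$ of candidates good for all other pending subtrees.

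\textbf{You must also protect the siblings of the new vertex.} Here $y$ comes from the same pool $\varphi(N_{T_v}(\psi(v)))$, and $v$ may still have children to embed. So you also need a wide subtree of $T_v$ whose image avoids $N_G[y]$, whereas your requirements only mention the pools of the \emph{other} pending vertices. The paper splits the cleaned pool into a half $Y\subseteq\varphi(Q)$ and the remainder $X$ of $\varphi(D)$. It then runs the same good-set and auxiliary-graph argument between $X$ and $Y$ to find $y\in Y$. The bookkeeping loses $(c/4)^{2h}$ in width per embedded vertex and $(c/4)^{h}$ on the pool per pending subtree. This, rather than $(c/2)^{\abs F}$ per vertex, is where the exponent $3h\abs F$ comes from.
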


	Provided the last two lemmas, we are now ready to prove \zcref{thm:sparsetree}, as follows.
	
	\begin{proof}[Proof of \zcref{thm:sparsetree}, assuming \zcref{lem:skeleton,lem:turn}]
		We may assume that $\abs F\ge2$; thus $h\ge1$. Let $\eps:=c/4$, and suppose that $G$ has average degree at least $\eps^{-4h\abs F}t$. Then $\de(G)\ge \frac12\eps^{-4h\abs F}t$.
		Let $\rho\in V(F)$ be a vertex of distance at most $h$ to every other vertex in $F$.
		Since $h\le \abs F-1$, we have
		\[
		4h\abs F-3h\abs F-\binom{h+1}{2}
		\ge h\abs F-h\abs F/2\ge1.
		\]
		Because $\eps\in(0,\frac18)$, we have $\frac12\eps^{-1}\ge 4\eps=c\ge \eps^{{h+1\choose2}}\cdot\Phi(c,h)$, which yields
		\[
		\de(G)\ge \frac12\eps^{-4h\abs F}t
		\ge \frac12\eps^{-1-\binom{h+1}{2}-3h\abs F}t
		\ge \Phi(c,h)\cdot \eps^{-3h\abs F}t.
		\]
		
		Now, let $d:=\eps^{-3h\abs F}t$. \zcref{lem:skeleton} implies that there is a $(d,h)$-skeleton in $G$; and so \zcref{lem:turn} gives an induced $F$ in $G$. This proves \zcref{thm:sparse-tree}.
	\end{proof}
	
	\subsection{Some useful facts}
	To prove \zcref{lem:skeleton,lem:turn}, we require a couple of useful lemmas. The first one is a simple property of disjoint pairs of vertex subsets with not too high edge density.
	
	\begin{lemma}\label{lem:sparse}
		Let $G$ be a graph with disjoint $A,B\subset V(G)$
		such that $e_G(A,B)\le (1-c)\abs{A}\abs{B}$. Then the following hold:
		\begin{enumerate}
			\item $A$ has a vertex $v$ having at least $c\abs{B}$ non-neighbours in $B$.
			\item $A$ has at least $(c/2)\abs{A}$ vertices having at least $(c/2)\abs{B}$ non-neighbours in $B$.
		\end{enumerate}
	\end{lemma}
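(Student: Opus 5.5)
Both parts follow from averaging over the non-neighbourhood counts. For each $v\in A$, let $x_v:=\abs{B\setminus N_G(v)}$ be the number of non-neighbours of $v$ in $B$. Since $A$ and $B$ are disjoint, $e_G(A,B)=\sum_{v\in A}(\abs B-x_v)$. The hypothesis $e_G(A,B)\le(1-c)\abs A\abs B$ therefore rearranges to
\[\sum_{v\in A}x_v\ge c\abs A\abs B.\]
We may assume $A\neq\emptyset$ where needed; if $A$ is empty, part (2) is vacuous because it asks for at least $0$ vertices.

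For (1), the average of $x_v$ over $v\in A$ is at least $c\abs B$. So some $v\in A$ has $x_v\ge c\abs B$, which is the required vertex.

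For (2), let $A':=\{v\in A:x_v\ge (c/2)\abs B\}$, and suppose for contradiction that $\abs{A'}<(c/2)\abs A$. We split the sum according to whether $v\in A'$:
\begin{itemize}
\item each $v\in A'$ satisfies $x_v\le\abs B$;
\item each $v\in A\setminus A'$ satisfies $x_v<(c/2)\abs B$.
\end{itemize}
This gives
\[\sum_{v\in A}x_v\le \abs{A'}\abs B+\abs{A\setminus A'}\cdot\tfrac c2\abs B<\tfrac c2\abs A\abs B+\tfrac c2\abs A\abs B=c\abs A\abs B,\]
which contradicts the lower bound above. Hence $\abs{A'}\ge(c/2)\abs A$.

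One edge case needs care: if $\abs B=0$, then every vertex has $0\ge 0$ non-neighbours in $B$, and both statements hold trivially. In the strict-inequality step the contradiction still works when $A'$ is empty, provided $\abs A,\abs B>0$. There is no real obstacle here; the only point to check is where the strict inequality comes from, namely that $\abs{A'}<(c/2)\abs A$ is used while the bound $x_v<(c/2)\abs B$ on $A\setminus A'$ is applied non-strictly, which is fine. This is the standard Markov-type averaging argument.
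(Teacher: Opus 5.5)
Your proof is correct and is essentially the paper's argument: the same averaging for (1), and for (2) the same Markov-type split of $A$ according to whether a vertex has at least $(c/2)\abs{B}$ non-neighbours in $B$. The only cosmetic difference is that the paper counts neighbours rather than non-neighbours and solves the resulting inequality exactly, getting the marginally stronger $\abs{A'}\ge\frac{c/2}{1-c/2}\abs{A}$, whereas your bound $\abs{A\setminus A'}\le\abs{A}$ gives exactly the claimed $(c/2)\abs{A}$.
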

	\begin{proof}
		Since $\sum_{v\in A}\abs{N_G(v)\cap B}=e_G(A,B)\le(1-c)\abs{A}\abs{B}$, 
		there exists $v\in A$ with $\abs{N_G(v)\cap B}\le (1-c)\abs{B}$. 
		Thus $v$ has at least $c\abs{B}$ non-neighbours in $B$, proving the first fact.
		
		Now, for the second fact, let $A'=\{u\in A: \abs{N_G(u)\cap B}\le (1-\frac{c}{2})\abs{B}\}$.
		Then \[ (1-c)\abs{A}\abs{B}\ge e_G(A,B)\ge 
		\sum_{v\in A\setminus A'} \left(1-\frac{c}{2}\right)\abs{B} \] 
		and therefore $\abs{A\setminus A'}\le \frac{1-c}{1-(c/2)}\abs{A}$. 
		Hence $\abs{A'}\ge \frac{c/2}{1-(c/2)}\abs{A}\ge \frac{c}{2}\abs{A}$.
		This proves \zcref{lem:sparse}.
	\end{proof}
	
	Next, for a rooted tree $(T,r)$ and $\eps>0$, 
	an \emph{$\eps$-subtree} of $(T,r)$ is a rooted subtree $(T',r)$ of $(T,r)$ such that
	for all $u\in V(T)\cap V(T')$,
	the number of  children of $u$ in $(T',r)$
	is at least $\eps$ times the number of children of $u$ in $(T,r)$.
	For a skeleton $\varphi$ from $(T,r)$ to a graph~$G$, 
	a vertex $v\in V(G)\setminus \varphi(V(T))$ is \emph{$\eps$-good} for $\varphi$
	if there exists an $\eps$-subtree $(T',r)$ of $(T,r)$
	such that $\varphi(V(T'))\cap N_G[v]=\emptyset$.
	For $S\subset V(G)\setminus \varphi(V(T))$, we say that $S$ is \emph{$\eps$-good} for $\varphi$ if every $v\in S$ is $\eps$-good for $\varphi$.
	The following lemma, used in the proofs of \zcref{lem:skeleton,lem:turn}, says that given a sufficiently wide skeleton $\varphi$ in a~$(c,t)$-sparse graph $G$, and a sufficiently large set $X$ of nonneighbours of the root of $\varphi$ in $G$, one can shrink~$X$ by a polynomial of $1/c$ to a subset that is decently good for $\varphi$.

	\begin{lemma}\label{lem:sparse-shrink}
		Let $c\in(0,\frac12)$, 
		and let $h\ge0$, $t\ge 1$ be integers.
		Let $(T,r)$ be a $((4/c)^ht,h)$-wide rooted tree, and let $G$ be a $(c,t)$-sparse graph with a skeleton $\varphi$ from $(T,r)$ to $G$.
		Let $X\subseteq V(G)\setminus (\varphi(V(T))\cup  N_G(\varphi(r)))$,
		such that $\abs{X} \ge (4/c)^{h} t$.
		Then 
		there exists $Y\subset X$ with $\abs{Y}\ge (c/4)^h \abs{X}$ 
		such that $Y$ is $2(c/4)^{h+1}$-good for $\varphi$.
	\end{lemma}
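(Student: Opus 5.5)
We argue by induction on $h$, writing $\eps:=c/4$; we aim for a subset $Y$ each of whose vertices is $2\eps^{h+1}$-good. For $h=0$ the tree $T$ is the single vertex $r$, and we take $Y:=X$. Every $v\in X$ lies outside $\varphi(V(T))\cup N_G(\varphi(r))$, so $\varphi(r)\notin N_G[v]$. Hence $(T,r)$ itself is an $\eps'$-subtree witnessing goodness for any $\eps'$, since $r$ has no children and the child condition is vacuous.

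For $h\ge1$, let $u_1,\dots,u_D$ be the children of $r$, where $D\ge(4/c)^ht\ge t$, and let $T_i$ be the subtree of descendants of $u_i$, rooted at $u_i$. Then:
\begin{itemize}
\item by local injectivity at $r$, the vertices $\varphi(u_i)$ are distinct, so $U:=\{\varphi(u_i):i\in[D]\}$ has size $D\ge t$;
\item each $(T_i,u_i)$ has depth $h-1$ and every non-leaf vertex has at least $(4/c)^ht\ge(4/c)^{h-1}t$ children, so it is $((4/c)^{h-1}t,h-1)$-wide;
\item $\varphi|_{T_i}$ is a skeleton from $(T_i,u_i)$, because root-to-leaf paths of $T_i$ are subpaths of root-to-leaf paths of $T$, and subpaths of induced paths are induced.
\end{itemize}
Since $X$ and $U$ are disjoint and both have size at least $t$, the $(c,t)$-sparseness gives $e_G(X,U)\le(1-c)\abs X\abs U$. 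So there are at least $c\abs X\abs U$ nonadjacent pairs $(v,i)$ with $v\in X$ and $v\varphi(u_i)\notin E(G)$.

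Let $X_i:=X\setminus N_G(\varphi(u_i))$ and $I:=\{i:\abs{X_i}\ge(c/2)\abs X\}$. Since each $\abs{X_i}\le\abs X$, averaging over the nonadjacent pairs gives $\abs I\ge(c/2)\abs U$. For $i\in I$ we have $\abs{X_i}\ge(c/2)(4/c)^ht=2(4/c)^{h-1}t$. Also $X_i$ avoids $\varphi(V(T_i))$ and $N_G(\varphi(u_i))$. So the induction hypothesis applies to $\varphi|_{T_i}$ and $X_i$, and yields $Y_i\subset X_i$ with
\[
\abs{Y_i}\ge\eps^{h-1}\abs{X_i}\ge\eps^{h-1}(c/2)\abs X=2\eps^h\abs X ,
\]
each of whose vertices is $2\eps^h$-good for $\varphi|_{T_i}$.

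Now define $Y:=\{v\in X: v\in Y_i\text{ for at least }\eps^h\abs I\text{ indices }i\in I\}$. Double counting the pairs $(v,i)$ with $i\in I$ and $v\in Y_i$ gives
\[
2\eps^h\abs X\abs I\le\sum_{i\in I}\abs{Y_i}\le\abs Y\abs I+\abs X\cdot\eps^h\abs I ,
\]
hence $\abs Y\ge\eps^h\abs X$.

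It remains to check that each $v\in Y$ is $2\eps^{h+1}$-good for $\varphi$. Let $J$ be the set of indices $i\in I$ with $v\in Y_i$, so $\abs J\ge\eps^h(c/2)\abs U=2\eps^{h+1}D$. For each $i\in J$, take a $2\eps^h$-subtree $T_i'$ of $(T_i,u_i)$ with $\varphi(V(T_i'))\cap N_G[v]=\emptyset$. Form $T'$ from $r$ together with $\bigcup_{i\in J}T_i'$. Then:
\begin{itemize}
\item $r$ has $\abs J\ge2\eps^{h+1}D$ children in $T'$;
\item every other vertex of $T'$ keeps at least a $2\eps^h\ge2\eps^{h+1}$ fraction of its children;
\item $\varphi(r)\notin N_G[v]$, since $v\in X$.
\end{itemize}
So $T'$ is the required $2\eps^{h+1}$-subtree, and this completes the induction.

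The main obstacle is threading the quantitative thresholds. The children with many nonneighbours must be chosen directly from the edge count, rather than via part (2) of \zcref{lem:sparse}, so that $\abs{X_i}$ still exceeds $(4/c)^{h-1}t$. Likewise, the final averaging must lose only a factor $\eps$ at the root level, so that the goodness parameter $2\eps^{h+1}$ comes out exactly as stated.
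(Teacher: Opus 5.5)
Your proof is correct and follows essentially the same route as the paper: induct on $h$; use sparseness between $X$ and the images of the root's children to find a $(c/2)$-fraction of children, each missing at least a $(c/2)$-fraction of $X$; apply the induction hypothesis in each such branch; then double count to obtain $Y$ and assemble the $2(c/4)^{h+1}$-subtree (the paper phrases the double count as \zcref{lem:sparse} with parameter $2(c/4)^h$ on an auxiliary bipartite graph). One correction to your closing remark: your inline count that selects $I$ is exactly part (2) of \zcref{lem:sparse} applied to the pair $(U,X)$, which is what the paper uses, so there is no need to avoid that lemma.
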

	\begin{proof}
		Let $\eps:=c/4$.
		We proceed by induction on~$h$.
		The statement is trivial if $h=0$
		by taking $T'=T$ and $Y=X$.
		So we may assume that $h>0$ and the statement holds for $h-1$.
		Assume $G$, $\varphi$, and $X$ satisfy the hypothesis.
		Let $A:=\varphi(N_T(r))$.
		For each $v\in A$, let $X_v:=X\setminus N_G(v)$.
		By \zcref{lem:sparse} applied to $A$ and $X$,
		there exists $A_1\subseteq A$ such that $\abs{A_1}\ge2\eps\abs{A}$ 
		and 
		$\abs{X_v}\ge 2\eps\abs{X}$ for all $v\in A_1$.
		
		For each $v\in A_1$, let $\varphi^{-1}(v)\cap N_T(r)=\{r_v\}$ (note that $\varphi$ is locally injective), and let $(T_v,r_v)$ be the subtree of $(T,r)$ rooted at $r_v$.
		Then $\varphi\vert_{V(T_v)}$ is an $(\eps^{-h}t,h-1)$-skeleton from $(T_v,r_v)$ to $G$; and so the induction hypothesis applied to $\varphi\vert_{V(T_v)}$ gives $X_v'\subset X_v$ that is $2\eps^h$-good for $\varphi\vert_{V(T_v)}$ and satisfies
		\[\abs{X_v'}\ge \eps^{h-1}\abs{X_v}\ge 2 \eps^{h}\abs{X}.\]
		
		Now, let $H$ be an auxiliary bipartite graph with bipartition $(A_1,X)$ where $N_H(v)=X\setminus X_{v}'$ for all~$v\in A_1$.
		Then every vertex in $A_1$ has at least $2\eps^h\abs{X}$ non-neighbours in $H$.
		By \zcref{lem:sparse} (with~$2\eps^h$ replacing~$c$), there exists $Y\subset X$ with $\abs Y\ge\eps^h\abs X$  such that each $u\in Y$ has
		at least $\eps^{h}\abs{A_1}$ non-neighbours in $H$.
		Hence $Y$ 
		is $2\eps^{h}$-good for $\varphi\vert_{V(T_v)}$ for at least 
		$\eps^{h}\abs{A_1}$ distinct $v\in A_1$.
		
		It suffices to show that each $u\in Y$ is $2\eps^{h+1}$-good for $\varphi$.
		To this end, let $A_u$ be the set of vertices $v\in A_1$ such that
		$u$ is $2\eps^h$-good for $\varphi\vert_{V(T_v)}$;
		then 
		\[
		\abs{A_u}\ge \eps^{h}\abs{A_1}\ge 2\eps^{h+1}\abs{A}.
		\]
		For each $v\in A_u$, choose a $2\eps^h$-subtree $S_{u,v}$ of $T_v$ such that 
		\(
		\varphi(V(S_{u,v}))\cap N_G(u)=\emptyset.
		\)
		Let $T_u$ be the subtree of $T$ induced by $\{r\}$ and the union of $V(S_{u,v})$ for all $v\in A_u$.
		Since $u\in X\subseteq V(G)\setminus N_G(\varphi(r))$, we have $\varphi(r)\notin N_G(u)$.
		Also, for every vertex $x\in V(S_{u,v})$, the number of children of $x$ in $(T_u,r)$ is at least~$2\eps^{h}$ times the number of children of $x$ in $T_v$.
		Moreover, the root $r$ has at least $\abs{A_u}\ge 2\eps^{h+1}\abs{A}$ children in $(T_u,r)$.
		Therefore $(T_u,r)$ is a $2\eps^{h+1}$-subtree of $(T,r)$.
		By construction,
		\(
		\varphi(V(T_u))\cap N_G[u]=\emptyset,
		\)
		so $u$ is $2\eps^{h+1}$-good for $\varphi$.
		This proves \zcref{lem:sparse-shrink}.
	\end{proof}
	
	\subsection{Constructing skeletons}
	\label{subsec:skeleton}
	In this subsection we prove \zcref{lem:skeleton}.
	The proof is done by induction on $h$; that is, we aim to increase the depth $h$ of our skeleton one by one.
	We need one more definition. For a skeleton $\varphi$ from $(T,r)$ to $G$, we say that $u\in V(G)\setminus \varphi(V(T))$ is \emph{$\eps$-nice} for $\varphi$ if there is an $\eps$-subtree $(T',r)$ of $(T,r)$ such that $\varphi(V(T'))\cap N_G[u]=\{\varphi(r)\}$.
	To increase the depth $h$ by one we require the following lemma, whose proof is inspired by that of~\cite[Lemma 2.4]{SSS2023}.

	\begin{lemma}\label{lem:build-skeleton}
		Let $h\ge 1$, let $t\ge 1$ be an integer, let $d\ge t$ be real, and let $c\in(0,\frac12)$.
		Then for every $(c,t)$-sparse graph $G$, either:
		\begin{itemize}
			\item $G$ has an induced subgraph~$J$ with
			\(
			\de(J)\ge \de(G)-2(4/c)^{h+1}d
			\)
			such that there is no $((4/c)^{h+1}d,h)$-skeleton in $J$, or
			\item there is a $(d,h+1)$-skeleton in $G$.
		\end{itemize}
	\end{lemma}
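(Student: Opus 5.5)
The plan is an iterative deletion argument in the spirit of \cite[Lemma 2.4]{SSS2023}. Write $\eps:=c/4$ and $D:=\eps^{-(h+1)}d$. Starting from $J_0:=G$, I repeatedly delete single vertices. Suppose we currently have an induced subgraph $J$ that contains a $(D,h)$-skeleton $\varphi$ from $(T,r)$ with root $x:=\varphi(r)$. The aim is to either extend $\varphi$ to a $(d,h+1)$-skeleton by adding a new root, or delete $x$ from $J$ at a small cost in edges.

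The process stops when the current $J$ has no $(D,h)$-skeleton. If at that point $\de(J)\ge\de(G)-2D$, the first alternative holds. Each deletion of a vertex $x$ lowers $\abs{E}$ by $\deg_J(x)$ and $\abs J$ by $1$. So $\de$ does not drop by more than $2D$ overall, provided every deleted root satisfies $\deg_J(x)\le 2D$. A cleaner way to organise this is to keep $\abs{E(J)}-2D\abs J$ nondecreasing, which gives $\de(J)\ge\de(G)-2D$ whenever $J\neq\emptyset$; the empty case needs a small separate treatment or a slight adjustment. It therefore suffices to show the following: if the root $x$ of a $(D,h)$-skeleton in $J$ has $\deg_J(x)>2D$, then $G$ contains a $(d,h+1)$-skeleton.

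\textbf{Extension step.} Let $N:=N_J(x)\setminus\varphi(V(T))$. Since $\varphi$ is only locally injective, $\varphi(V(T))$ can be large, so the new root must instead be a vertex $z$ adjacent to many vertices that are nice. I would swap roles: take as the new root a vertex $w\in N$ for which $x$ and many other candidates serve as children. Concretely, for each $u\in N$ we want an $\eps$-subtree of $(T,r)$ avoiding $N_G[u]$ except at $\varphi(r)$. That is exactly $u$ being $\eps$-nice, and it is what makes the path new root $\to$ $x$ $\to$ branch induced.

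The better plan seems to be this. Among the neighbours of $x$, take $X:=N$ minus the children images $\varphi(N_T(r))$ and minus $\varphi(V(T))$; this has size at least $\deg(x)-\abs{\varphi(V(T))}$. Since that could be negative, I would instead use only the depth-$h$ structure through Lemma~\ref{lem:sparse-shrink}. Applied to the subtrees at the children of $r$ (depth $h-1$, width $\eps^{-(h+1)}d\ge\eps^{-(h-1)}t$) and the set of neighbours of $x$ that are nonadjacent to the relevant child image, it yields a large set $Y$ of vertices that are $2\eps^{h}$-good for many child branches. Gluing gives vertices $u\in Y$ that are $\eps$-nice for $\varphi$. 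Using sparseness via Lemma~\ref{lem:sparse}(1) between $Y$ and itself then yields a vertex $z$ with at least $d$ nice vertices among its neighbours. Alternatively, one could take $z=x$ and rebuild skeletons rooted at each nice $u$. Defining the new $(d,h+1)$-skeleton by sending the new root to $z$ and its children's subtrees to the corresponding pruned copies gives a locally injective homomorphism: the children are distinct, and each subtree's neighbourhood is injective already. Root-to-leaf paths stay induced because niceness says the new vertex sees only the root of its pruned subtree.

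\textbf{Main obstacle.} I expect the bookkeeping that makes the depth-$(h+1)$ tree genuinely $(d,h+1)$-wide to be the hard part. The $\eps$-subtrees must retain at least $d$ children at every level from width $D=\eps^{-(h+1)}d$, which matches $\eps^{h+1}$ loss across the levels via Lemma~\ref{lem:sparse-shrink}. In addition, the new top vertex must be nonadjacent to all deeper images along each branch. I would first try to apply Lemma~\ref{lem:sparse-shrink} with $X$ the set of neighbours of the chosen root $z$ among the candidate children, removing the children's own root neighbours first. The size hypothesis $\abs X\ge\eps^{-h}t$ then follows from $\deg>2D$ and $d\ge t$.
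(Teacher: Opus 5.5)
Your reduction is where this breaks, and the extension step inherits the problem.

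\textbf{The reduction is false.} You claim it suffices to show that if the root $x$ of a $(D,h)$-skeleton in $J$ has $\deg_J(x)>2D$, then $G$ contains a $(d,h+1)$-skeleton. Take $h=1$, $t=d=2$ and $G=K_{1,n}$ with $n>2D$. Whenever $\abs A,\abs B\ge2$ one has $e_G(A,B)\le\abs A\abs B/2$, so $G$ is $(c,2)$-sparse for every $c\in(0,\frac12)$. The centre is the root of a $(D,1)$-skeleton and has degree $n>2D$. Yet $G$ has no $(2,2)$-skeleton. A leaf of the star has only one neighbour, so it cannot be the root. If the root is mapped to the centre, its children are mapped to leaves, and these have no neighbour other than the centre to host grandchildren; local injectivity forbids reusing the centre. (The lemma holds for this $G$ only through the first outcome, since $\de(G)<1$.)

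The underlying reason is that every child of the new root must itself be the root of a $(d,h)$-skeleton. So you need at least $d$ distinct roots of depth-$h$ skeletons with a common neighbour that is nice for each of them, and one skeleton $\varphi$ supplies only one such root. This is also why your extension step cannot succeed. A vertex $z$ whose neighbours $u_1,\dots,u_d$ are all nice for the same $\varphi$ gives each $u_i$ only the single child $x$. Moreover, part (1) of \zcref{lem:sparse} produces non-neighbours, so it cannot locate a vertex with many neighbours in the first place.

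\textbf{The missing idea is to average over all roots at once.} The paper fixes the depth-$h$ tree $(T,r)$ in which every internal vertex has exactly $\ceil W$ children, where $W=\eps^{-(h+1)}d$. It lets $X$ be the set of all roots of skeletons from $(T,r)$ to $G$, so $G\setminus X$ has no $(W,h)$-skeleton. If the first outcome fails, then $\sum_{v\in X}d_G(v)>2W\abs G$, and the roots with $d_G(v)\ge\frac32W$ still carry total degree more than $\frac12W\abs G$.

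For each such $v$, essentially your gluing argument yields at least $\frac14\eps^hd_G(v)$ vertices that are $2\eps^{h+1}$-nice for $\varphi_v$. Here $A=\varphi_v(N_T(r))$ and $B=N_G(v)\setminus A$, and $B$ is automatically disjoint from $\varphi_v(V(T))$ because root-to-leaf paths map to induced paths. So your worry about $\abs{\varphi(V(T))}$ is unnecessary. Note that the niceness parameter is $2\eps^{h+1}$, not $\eps$, and this is exactly what dictates the width $W$.

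Double counting pairs $(u,v)$ with $u$ nice for $\varphi_v$ gives more than $\frac{1}{8\eps}d\abs G>d\abs G$ pairs. Hence a single vertex $u$ is nice for at least $\ceil d$ roots $v$. Then $u$, together with the pruned $(d,h)$-skeletons $\varphi_v$, is the required $(d,h+1)$-skeleton; their images may overlap, which local injectivity allows.

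Your one-vertex-at-a-time deletion could be salvaged the same way. Delete roots regardless of their degree, and if the first outcome fails at the end, use the global bound $\abs{E(G)}-\abs{E(J)}>2D\abs G$ on the total degree of the deleted roots. The per-vertex reduction as you stated it, however, cannot be proved.
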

	\begin{proof}
		Let $\eps:=c/4$, let $W:=\eps^{-(h+1)}d$, and assume that the first outcome does not hold.
		Let $(T,r)$ be the rooted tree of depth $h$ in which every vertex of height less than $h$ has exactly $\ceil W$ children, and let $X$ be the set of vertices $v\in V(G)$ for which $v$ is the root of some skeleton $\varphi_v$ from $(T,r)$ to $G$.
		Then there is no skeleton from $(T,r)$ to~$G\setminus X$; and so there is no $(W,h)$-skeleton in $G\setminus X$.
		Since the first outcome of the lemma does not hold, we have $\de(G\setminus  X)<\de(G)-2W$, which yields
		\[\abs{E(G\setminus  X)}=\de(G\setminus  X)\cdot \abs{G\setminus  X}
		<(\de(G)-2W)\cdot\abs G
		=\abs{E(G)}-2W\abs G.\]
		It follows that
		\[\sum_{v\in X}d_G(v)\ge \abs{E(G)}-\abs{E(G\setminus  X)}
		>2W\abs G.\]
		Let $Y:=\{v\in X:d_G(v)\ge \frac32 W\}$, then
		\[\sum_{v\in Y}d_G(v)=\sum_{v\in X}d_G(v)-\sum_{v\in X\setminus Y}d_G(v)>2W\abs G-\frac32 W\abs G=\frac12 W\abs G.\]
		
		We next claim that each $v\in Y$ admits many decently nice vertices for $\varphi_v$, as follows.
		
		\begin{claim}
			\label{claim:nice}
			For every $v\in Y$, at least $\frac14 \eps ^h d_G(v)$ vertices in $G$ are $2\eps ^{h+1}$-nice for $\varphi_v$.
		\end{claim}
		\begin{subproof}
			Let $A:=\varphi_v(N_{T}(r))\subset N_G(v)$; and let $B:=N_G(v)\setminus A$. Then 
			$\abs A=\ceil W \ge d\ge t$.
			Also, since $W>\eps^{-2}>64$ and $d_G(v)\ge\frac32W$, we have
			$\abs A=\ceil W<\frac98W\le\frac34d_G(v)$, and so
			$\abs B\ge \frac14d_G(v)\ge t$.
			Furthermore, $B\cap \varphi_v(V(T))=\emptyset$, 
			because otherwise $G[\varphi_v(V(L))]$ is not an induced path of $G$ for some root-to-leaf path $L$ of $T$.
			Since $G$ is $(c,t)$-sparse,
			there exists $P\subset A$ with $\abs P\ge\frac12c\abs A=2\eps\abs A$ such that every vertex in~$P$ has at least $\frac12c\abs B=2\eps\abs B$ nonneighbours in $B$, by \zcref{lem:sparse}.
			For each~$z\in P$, let 
			$z'$ be a child of $r$ in $T$ 
			such that $\varphi_v(z')=z$.
			Since $\varphi_v$ is locally injective,
			such a vertex~$z'$ of~$T$ is unique.
			Let $(T_z,z')$ be the subtree of~$(T,r)$ rooted at $z'$.
			Since $(T_z,z')$ is $(\eps^{-(h-1)}t,h-1)$-wide and $\abs{B\setminus N_G(z)}\ge2\eps\abs B\ge\eps^{-(h-1)}t$,
			by \zcref{lem:sparse-shrink}, there exists $B_z\subset B\setminus N_G(z)$ with $\abs{B_z}\ge \eps ^{h-1}\abs{B\setminus N_G(z)}\ge2\eps ^h\abs B$ such that $B_z$ is $2\eps ^h$-good for $\varphi_v\vert_{V(T_z)}$.
			Let $H$ be the auxiliary bipartite graph with bipartition $(P,B)$ where $N_H(z)=B\setminus B_z$ for all $z\in P$; then \zcref{lem:sparse} applied with parameter~$2\eps^h$ gives $Q\subset B$ with $\abs Q\ge \eps^h\abs B\ge \frac14\eps^h d_G(v)$ such that each vertex in $Q$ is non-adjacent in $H$ to at least $\eps^h\abs P\ge2\eps^{h+1}\abs A$ vertices of $P$.
			Then $Q$ is $2\eps^{h+1}$-nice for~$\varphi_v$. This proves \zcref{claim:nice}.
		\end{subproof}
		
		Now, \zcref{claim:nice} implies that the number of ordered pairs $(u,v)\in V(G)\times Y$ such that $u$ is $2\eps ^{h+1}$-nice for $\varphi_v$ is at least
		\[\frac14\eps ^h\sum_{v\in Y}d_G(v)
		>\frac{1}{8\eps}d\abs G>d\abs G
		\]
		and so there exist $u\in V(G)$ and at least $\ceil d$ vertices $v\in Y$ such that $u$ is $2\eps ^{h+1}$-nice for $\varphi_v$. Let $Z$ be a set of $\ceil d$ such vertices $v$.
		For each $v\in Z$, since $u$ is $2\eps ^{h+1}$-nice for $\varphi_v$ and $(T,r)$ is $(W,h)$-wide, there exists a $(d,h)$-wide rooted subtree $(T_v',r)$ of $(T,r)$
		such that $\varphi_v(V(T_v'))\cap N_G[u]=\{v\}$. Assembling~$u$ and the $(d,h)$-skeletons $(\varphi_v\vert_{V(T_v')}:v\in Z)$ then gives a $(d,h+1)$-skeleton in $G$ with root $u$. 
		Thus the second outcome of the lemma holds.
		This proves \zcref{lem:build-skeleton}.
	\end{proof}
	
	We are now ready to prove \zcref{lem:skeleton}, as follows.
	
	\begin{proof}[Proof of \zcref{lem:skeleton}]
		We prove the lemma by induction on~$h$.
		If $h=0$, then any vertex of $G$ gives a $(d,0)$-skeleton.
		If $h=1$, then some vertex of $G$ has degree at least $d$, giving a $(d,1)$-skeleton.
		So let $h\ge 1$, and assume that the lemma holds for~$h$.
		Let $G$ be a $(c,t)$-sparse graph with
		\(
		\de(G)\ge \Phi(c,h+1)\cdot d.
		\)
		By \zcref{lem:build-skeleton}, either:
		\begin{itemize}
			\item $G$ has an induced subgraph $J$ with $\de(J)\ge\de(G)-2(4/c)^{h+1}d$ such that there is no $((4/c)^{h+1}d,h)$-skeleton in $J$; or
			
			\item there is a $(d,h+1)$-skeleton in $G$.
		\end{itemize}
		If the second bullet holds then we are done; and so we may assume that the first bullet holds. Thus
		\[
		\de(J)
		\ge \de(G)-2(4/c)^{h+1}d
		\ge \Phi(c,h+1)\cdot d-2(4/c)^{h+1}d
		= \Phi(c,h)\cdot (4/c)^{h+1} d.
		\]
		Since $(4/c)^{h+1} d\ge d\ge t$, the induction hypothesis applied to $J$ with
		\(d':=(4/c)^{h+1} d\)
		gives a $(d',h)$-skeleton in $J$, contrary to the choice of~$J$.
		This proves \zcref{lem:skeleton}.
	\end{proof}
	
	\subsection{From skeletons to induced trees}
	\label{subsec:turn}
	In this subsection we complete the proof of \zcref{lem:turn}, which we briefly sketch as follows.
	We are given a skeleton $\varphi$ from some sufficiently wide rooted tree $(T,r)$ to $G$; and we would like to construct an injective homomorphism $\psi$ from $F$ to $T$ such that $\varphi\circ \psi$ is an isomorphism.
	We can first define $\psi(\rho):=r$, and aim to extend $\psi$ by adding to its domain one vertex of $F$ at a time.
	Assume that at some stage we have already defined $\psi$ on some subtree $F'$ of $F$ that contains $\rho$, and that $F'$ is not all of $F$ (else we are done).
	Thus there are vertices of $F'$ with at least one child in $(F,\rho)$ that is unmapped; let us call these vertices in $F'$ `unfinished'.
	To make sure that our construction works, we also assume that for each unfinished vertex $v$ there is a sufficiently wide subtree $(T_v,\psi(v))$ of $(T,r)$ with root $\psi(v)$, such that each vertex in $\varphi(\psi(V(F')))$ has no neighbour in $\varphi(V(T_v)\setminus\{\psi(v)\})$ for each unfinished $v$ (unless that vertex itself is $\varphi(\psi(v))$).
	Next we pick an unfinished $v$ with an unmapped child $z$, and aim to define $\psi(z)$ to be some appropriate vertex in $N_{T_v}(\psi(v))$.
	To do so, we first shrink $N_{T_v}(\psi(v))$ by a $\operatorname{poly}(c)$ factor to a subset $Q$ whose image under $\varphi$ is decently good for $\varphi\vert_{V(T_u)}$ for all unfinished $u$ different from~$v$ (using \zcref{lem:sparse-shrink});
	then pick $y\in \varphi(Q)$ and a sufficiently wide subtree $(T_v',\psi(v))$ of $(T_v,\psi(v))$ such that $y$ has no neighbour in $\varphi(V(T_v')\setminus\{\psi(v)\})$ (this part is similar to the proof of \zcref{claim:nice}).
	Then it remains to clean up the neighbours of $y$ in $(T_u,\psi(u))$ for each unfinished $u$ different from $v$, and define $\psi(z)$ to be the unique vertex in $\varphi^{-1}(y)\cap N_{T_v}(\psi(v))$.
	Let us now go into the details.
	
	\begin{proof}[Proof of \zcref{lem:turn}]
		Let $a:=3h\abs F$, let $\eps:=c/4$, and let $\varphi$ be a skeleton from an $(\eps^{-a}t,h)$-wide rooted tree $(T,r)$ to $G$ given by the hypothesis.
		For a subtree $F'$ of $F$ with $\rho\in V(F')$, we say that $u\in V(F')$ is \emph{unfinished} if it has a child in $V(F)\setminus V(F')$; thus every unfinished vertex in $V(F')$ is an internal vertex of $(F,\rho)$.
		Let $F'$ be a subtree of $F$, with $\rho\in V(F')$ and $\abs{F'}$ maximal, for which there exist an injective homomorphism $\psi$ from $F'$ to $T$ and, for each unfinished vertex $u\in V(F')$ of height~$h-j$ in $(F,\rho)$ for some $j\in[h]$,
		an $(\eps^{2h\abs{F'}-a}t,j)$-wide rooted subtree $(T_u,\psi(u))$ of $(T,r)$, such that:
		
		\begin{itemize}
			\item $\psi(\rho)=r$;
			
			\item $\varphi\circ\psi$ is an isomorphism from $F'$ to $G[\varphi(\psi(V(F')))]$; and
			
			\item for every unfinished vertex $u\in V(F')$, $\varphi(V(T_u)\setminus\{\psi(u)\})$ is disjoint from and anticomplete in~$G$ to $\varphi(\psi(V(F')\setminus\{u\}))$.
		\end{itemize}
		(Such a tree $F'$ exists, for example by taking $F'=F[\{\rho\}]$.)
		
		If $\abs{F'}=\abs F$ then $\varphi\circ\psi$ gives an induced copy of $F$ in $G$ and we are done; so let us suppose for a contradiction that $\abs{F'}<\abs F$.
		Then $\abs F\ge2$ and so $h\ge1$.
		Let $z\in V(F)\setminus V(F')$ have height minimal in $(F,\rho)$, and let $v$ be the parent of $z$ in $(F,\rho)$; then $v\in V(F')$ and $v$ is unfinished.
		Let $E$ be the set of unfinished vertices in $F'$ different from $v$, and let $j$ be the depth of $(T_v,\psi(v))$.
		
		For each $u\in E$, let $\varphi_u:=\varphi\vert_{V(T_u)}$.
		Let $D:=N_{T_v}(\psi(v))$; then $\varphi(D)$ is disjoint from and anticomplete to $\varphi(\psi(E))$ by the last bullet above.
		For each $x\in \varphi(D)$, the local injectivity of $\varphi$ yields $\abs{\varphi^{-1}(x)\cap D}=1$; so in what follows we also write $\varphi^{-1}(x)$ for the unique preimage of $x$ in $D$ under $\varphi$.
		
		To reach a contradiction, we aim to extend $\psi$ to $V(F')\cup\{z\}$. To do so, we claim that:
		
		\begin{clm}
			{\ref*{lem:turn}.1}
			\label{claim:getone}
			There exist a vertex $y\in \varphi(D)$ that is $2\eps^h$-good for $\varphi_u$ for all $u\in E$ and an $(\eps^{2h(\abs{F'}+1)-a}t,j)$-wide subtree $(T_v',\psi(v))$ of $(T_v,\psi(v))$ such that $\varphi(V(T_v')\setminus\{\psi(v)\})\cap N_G[y]=\emptyset$.
		\end{clm}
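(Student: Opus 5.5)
Write $\kappa:=\eps^{2h\abs{F'}-a}t$. Then $(T_v,\psi(v))$ is $(\kappa,j)$-wide, and each $(T_u,\psi(u))$ with $u\in E$ is $(\kappa,j_u)$-wide with $j_u\le h$. The plan mirrors the proof of \zcref{claim:nice}, in two stages.

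First, I shrink $\varphi(D)$ to a set $Q$ that is $2\eps^h$-good for every $\varphi_u$ with $u\in E$. The set $\varphi(D)$ is disjoint from $\varphi(V(T_u))$, and by the third bullet it is anticomplete to $\varphi(\psi(u))$, the root of $\varphi_u$. So \zcref{lem:sparse-shrink} applies to $\varphi_u$ with $X$ the current set. This needs $\kappa\ge(4/c)^{h}t$ and $\abs X\ge(4/c)^h t$, both of which hold because $a$ is large compared with $2h\abs{F'}$ plus the losses below. I apply the lemma once for each $u\in E$ in turn, feeding the output of one step into the next. Each step loses a factor $\eps^{h}$ and keeps the good property for the earlier $u$. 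Since $\abs E\le\abs{F'}$, the result is $Q\subset\varphi(D)$ with $\abs Q\ge\eps^{h\abs{F'}}\abs D\ge\eps^{h\abs{F'}}\kappa$.

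Second, I choose $y\in Q$ and prune $T_v$. Set $A:=Q$. For each $x\in A$, let $T_x$ be the subtree of $T_v$ rooted at $\varphi^{-1}(x)$; it is $(\kappa,j-1)$-wide. The aim is a $y\in A$ such that, for a $2\eps^{h}$-proportion of the children $x\in A$, some large subtree of $T_x$ avoids $N_G[y]$ and $x$ itself is nonadjacent to $y$.

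The skeleton property gives that $\varphi(D)$ maps injectively, but $A$ need not be stable. So I use the $(c,t)$-sparseness on the pair $(A,A)$, with $\abs A\ge t$, together with \zcref{lem:sparse}(2). This gives a set $P\subset A$ with $\abs P\ge2\eps\abs A$, each of whose vertices $x$ has at least $2\eps\abs A$ nonneighbours in $A$.

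For each $x\in P$, I apply \zcref{lem:sparse-shrink} to $\varphi\vert_{V(T_x)}$ with $X=(A\setminus N_G[x])\setminus\varphi(V(T_x))$. There is a subtlety here: vertices of $A$ may lie in $\varphi(V(T_x))$. I expect to handle this by noting that a root-to-leaf path through $x$ is induced, so any other vertex of $\varphi(D)$ in $\varphi(V(T_x))$ lies at distance at least $2$ from $x$ along the path. That distance-$2$ vertex would be adjacent to $\varphi(\psi(v))$, which contradicts the path being induced. Hence $\varphi(D)\cap\varphi(V(T_x))=\{x\}$. Then $X$ has size at least $2\eps\abs A-1$, and the lemma yields $B_x\subset X$ of size at least $\eps^{h}\abs A$ that is $2\eps^{h}$-good for $\varphi\vert_{V(T_x)}$.

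Next I form the auxiliary bipartite graph between $P$ and $A$ with $N(x)=A\setminus B_x$, and apply \zcref{lem:sparse}(2) again. This gives a $y\in A$ lying in $B_x$ for at least $\eps^{h}\abs P\ge 2\eps^{h+1}\abs A$ vertices $x\in P$. For those $x$, I take the $2\eps^h$-subtrees guaranteed by goodness and join them at $\psi(v)$ to form $T_v'$. Also $y\notin N_G[x]$, so every vertex of $\varphi(V(T_v')\setminus\{\psi(v)\})$ avoids $N_G[y]$. Note that $y$ itself is excluded because $y\in B_x$ forces $y\ne x$.

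It remains to check the width. The root of $T_v'$ has at least $2\eps^{h+1}\abs A\ge\eps^{2h(\abs{F'}+1)-a}t$ children, and deeper vertices keep at least $2\eps^h\kappa$ children, both of which are large enough. The main obstacle I anticipate is this exponent bookkeeping. The loss is $\eps^{h\abs{F'}+h+1}$, and this must be absorbed by the $\eps^{2h}$ allowance; that works only because the definition of width for $T_v'$ is relative to $\kappa\eps^{2h}$. If it does not fit, I would redistribute losses so that the first stage only costs $\eps^{h}$ per unfinished vertex overall.
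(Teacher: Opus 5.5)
\textbf{There is a genuine gap: your second stage cannot meet the root-width requirement.} You draw both $y$ and the surviving children of $\psi(v)$ from the shrunk set $A=Q$.

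\emph{Why the count fails.} The root of $T_v'$ then has only about $2\eps^{h+1}\abs Q$ children. The first stage guarantees no more than $\abs Q\ge\eps^{(h-1)\abs E}\abs D$, and your own written bound is even weaker, $\abs Q\ge\eps^{h\abs{F'}}\abs D$. Moreover, $\abs D$ is only guaranteed to be at least $\kappa=\eps^{2h\abs{F'}-a}t$. The claim requires at least $\eps^{2h(\abs{F'}+1)-a}t=\eps^{2h}\kappa$ children at the root. So you would need $2\eps^{h+1}\eps^{(h-1)\abs E}\ge\eps^{2h}$, that is, $2\eps^{(h-1)(\abs E-1)}\ge1$. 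This fails as soon as $h\ge2$ and $\abs E\ge2$. With the bound you actually wrote down, it never closes. Your assertion that the loss $\eps^{h\abs{F'}+h+1}$ is absorbed by $\eps^{2h}$ is therefore false. The fallback of ``redistributing losses'' cannot help either: each $u\in E$ genuinely costs a factor $\eps^{\Theta(h)}$ through \zcref{lem:sparse-shrink}, and $\abs E$ is not bounded.

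\emph{The missing idea is an asymmetry.} Only $y$, which becomes the image of $z$, must be good for the trees $\varphi_u$ with $u\in E$; the children of $\psi(v)$ kept in $T_v'$ need not be. The paper uses this as follows.
\begin{enumerate}
\item It takes $Y\subset\varphi(Q)$ with $\abs Y=\ceil{\abs Q/2}$ as the pool for $y$.
\item It lets the children range over $X:=\varphi(D)\setminus Y$, which satisfies $\abs X\ge\abs D/4$ however much $Q$ shrank.
\item It applies sparseness to the disjoint pair $(X,Y)$, then \zcref{lem:sparse-shrink} to $Y\setminus N_G(x)$ for each $x\in X'$, then the auxiliary-graph averaging.
\end{enumerate}
This gives a $y\in Y$ that is $2\eps^h$-good for $\varphi_x$ for at least $\eps^{h+1}\abs D$ children $x$. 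Since $\eps^{h+1}\ge\eps^{2h}$, this fits the budget. The size of $Q$ only has to clear a threshold of order $\eps^{-3h}t$ so that the lemmas apply; it never enters the root count. Disjointness of $X$ and $Y$ also removes your self-pair $(A,A)$ and the $-1$ correction.

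\textbf{A smaller point in your first stage.} To get $2\eps^h$-goodness from \zcref{lem:sparse-shrink}, rather than only $2\eps^{h+1}$-goodness, each $T_u$ with $u\in E$ must have depth at most $h-1$. This holds because $\rho\notin E$, by the minimality of the height of $z$. You only recorded $j_u\le h$, which is not enough.
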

		\begin{subproof}
			Since $\abs F\ge\abs{F'}+1$, $\abs{F'}\ge\abs E+1$, and $\abs D\ge \eps^{2h\abs{F'}-a}t$, we have
			\[\eps^{h\abs E}\abs D\ge \eps^{h\abs E+2h\abs {F'}-a}t=\eps^{h\abs{E}+2h\abs {F'}-3h\abs F}t\ge\eps^{-4h}t.\]
			Note that for each $u\in E$, $\varphi(D)$ is disjoint from $\varphi(V(T_u)\setminus\{\psi(u)\})$ since the latter is anticomplete to $\varphi(\psi(v))$ in $G$, and also $\varphi(\psi(u))$ is disjoint from and anticomplete in $G$ to $\varphi(D)$.
			By the choice of~$z$, each $u\in E$ is not $\rho$; and so $T_u$ has depth at most~$h-1$.
			Hence, \zcref{lem:sparse-shrink} applied to $\varphi(D)$ versus $\varphi_u$ for each $u\in E$ gives $Q\subset D$ such that $\varphi(Q)$ is $2\eps^{h}$-good for $\varphi_u$ for all $u\in E$ and $\abs Q\ge\eps^{(h-1)\abs E}\abs D\ge\eps^{-3h}t$.
			Since $\varphi$ is locally injective, we can choose $Y\subset \varphi(Q)$ with $\abs Y=\ceil{\abs Q/2}$; let $X:=\varphi(D)\setminus Y$.
			Since $\eps<1/8$ and $h\ge1$, we have $\abs Y\ge \frac12\abs {Q}\ge\frac12\eps^{-3h}t\ge\eps^{-h-1} t$ and $\abs X\ge\abs D-\ceil{\frac12\abs D}=\floor{\frac12\abs D}\ge\frac14\abs D$.
			By \zcref{lem:sparse}, there exists $X'\subset X$ with $\abs{X'}\ge 2\eps \abs X$ such that every vertex in $X'$ has at least $2\eps \abs Y$ non-neighbours in $Y$.
			For each $x\in X'$, let $(T_x,\varphi^{-1}(x))$ be the subtree of $(T_v,\psi(v))$ rooted at $\varphi^{-1}(x)$ (so has depth $j-1\le h-1$), and let $\varphi_x:=\varphi\vert_{V(T_x)}$.
			Since $\varphi(\psi(v))$ is complete to $Y$ and anticomplete to $\varphi(V(T_x)\setminus\{\varphi^{-1}(x)\})$ in $G$, the set $Y\setminus N_G(x)$ has size at least $2\eps\abs Y\ge\eps\abs{Q}\ge\eps^{-h}t$ and is disjoint from $\varphi(V(T_x))\cup N_G(x)$.
			Hence \zcref{lem:sparse-shrink} gives $Y_x\subset Y\setminus N_G(x)$ with $\abs{Y_x}\ge  \eps ^{h-1}\cdot2\eps \abs Y=2 \eps ^{h}\abs Y$ such that $Y_x$ is $2 \eps ^{h}$-good for $\varphi_x$.
			Let $H$ be the bipartite graph with bipartition $(X',Y)$ where $N_H(x)=Y\setminus Y_x$ for all $x\in X'$; then \zcref{lem:sparse} applied with parameter $2\eps^h$ gives $y\in Y$ that is nonadjacent in $H$ to at least $ 2\eps ^{h}\abs {X'}\ge 4\eps^{h+1}\abs X\ge \eps^{h+1}\abs D$ vertices in $X'$.
			In other words, there exists $X_0\subset X'$ with (note that $h\ge1$)
			\[\abs{X_0}\ge \eps^{h+1}\abs D\ge \eps^{h+1+2h\abs{F'}-a}t\ge \eps^{2h(\abs{F'}+1)-a}t\]
			such that $y$ is $2\eps^h$-good for $\varphi_x$ for all $x\in X_0$.
			Then for each such $x$ there is a $2\eps^h$-subtree $(T_x',\varphi^{-1}(x))$ of $(T_x,\varphi^{-1}(x))$ with $\varphi(V(T_x'))\cap N_G[y]=\emptyset$. 
			Because $(T_x,\varphi^{-1}(x))$ is $(\eps^{2h\abs{F'}-a}t,j-1)$-wide, and
			\[2\eps^{h}\cdot \eps^{2h\abs{F'}-a}t\ge \eps^{h+2h\abs{F'}-a}t\ge \eps^{2h(\abs{F'}+1)-a}t,\]
			we see that $(T_x',\varphi^{-1}(x))$ is $(\eps^{2h(\abs{F'}+1)-a}t,j-1)$-wide for all $x\in X_0$.
			Thus, assembling $\psi(v)$ and $(T_x',\varphi^{-1}(x))$ for all $x\in X_0$ gives an $(\eps^{2h(\abs{F'}+1)-a}t,j)$-wide subtree $(T_v',\psi(v))$ of $(T_v,\psi(v))$ such that $\varphi(V(T_v')\setminus\{\psi(v)\})\cap N_G[y]=\emptyset$.
			This proves \zcref{claim:getone}.
		\end{subproof}
		
		Now, for each $u\in E$, let $j'$ be the depth of $(T_u,\psi(u))$. Since $y$ is $2 \eps ^{h}$-good for $\varphi_u$, there is a $2 \eps ^{h}$-subtree $(T_u',\psi(u))$ of $(T_u,\psi(u))$ such that $\varphi(V(T_u'))\cap N_G[y]=\emptyset$. Because
		\[2 \eps ^{h}\cdot  \eps ^{2h\abs{F'}-a}t
		\ge \eps^{2h(\abs{F'}+1)-a}t,\]
		we deduce that $(T_u',\psi(u))$ is $({\eps ^{2h(\abs{F'}+1)-a}t},j')$-wide.
		
		Finally, let $y':=\varphi^{-1}(y)$. Let $(T_{y'},y')$ be the subtree of $(T_v,\psi(v))$ rooted at $y'$; then $(T_{y'},y')$ is $(\eps^{2h\abs{F'}-a}t,j-1)$-wide and so is $(\eps^{2h(\abs{F'}+1)-a}t,j-1)$-wide.
		Because  $V(T_{y'})\subseteq V(T_v)\setminus\{\psi(v)\}$,
		$\varphi(\psi(V(F')\setminus\{v\}))$ is anticomplete to $\varphi(V(T_{y'}))$ in~$G$.
		Since $\varphi$ maps root-to-leaf paths in $(T,r)$ into induced paths in $G$, 
		$\varphi(\psi(v))$ has no neighbours in 
		$\varphi(V(T_{y'})\setminus\{y'\})$
		and therefore 
		$\varphi(\psi(V(F')))$ is anticomplete to $\varphi(V(T_{y'})\setminus\{y'\})$ in $G$.
		Thus, extending $\psi$ to $V(F')\cup\{z\}$ by defining $\psi(z):=y'$ would violate the maximality of $\abs{F'}$;
		note that $(T_v',\psi(v))$ is not needed if $v$ is no longer unfinished in $F[V(F')\cup\{z\}]$, and $(T_{y'},y')$ is not needed if $z$ is a leaf of $F$. This proves \zcref{lem:turn}.
	\end{proof}
	
	\section{Concluding remarks}
	\label{sec:oneinduced}
	We conclude the paper with the following question: for which graphs $H$ is the $H$-free subclass of every $\chi$-bounded hereditary class linearly $\chi$-bounded? A necessary condition is as follows.
	\begin{proposition}
		\label{prop:forcelin}
		Let $H$ be a graph such that for every $\chi$-bounded hereditary class $\mac G$, the $H$-free subclass of $\mac G$ is linearly $\chi$-bounded. Then $\overline H$ is a $P_5$-free forest.
	\end{proposition}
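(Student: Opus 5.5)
We prove the contrapositive: if $\overline H$ is not a $P_5$-free forest, we exhibit a $\chi$-bounded hereditary class $\mac G$ whose $H$-free subclass is not linearly $\chi$-bounded. There are two ways $\overline H$ can fail to be a $P_5$-free forest, namely $\overline H$ contains a cycle, or $\overline H$ is a forest containing an induced $P_5$. In both cases it is enough to find a hereditary class $\mac G$ with three properties: no member of $\mac G$ contains an induced $H$ (so $\mac G$ is its own $H$-free subclass); $\mac G$ is $\chi$-bounded; and $\chi/\omega$ is unbounded on $\mac G$. Throughout we use that $G$ contains an induced $H$ if and only if $\overline G$ contains an induced $\overline H$.

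\emph{Case 1: $\overline H$ contains a cycle.} Put $g:=\abs{H}$. Let $\mac G$ be the class of complements of graphs of girth greater than $g$; this class is hereditary because girth cannot decrease when we pass to induced subgraphs. Every cycle of $\overline H$ has length at most $g$, so $\overline H$ is not an induced subgraph of any graph of girth greater than $g$, and hence every $G\in\mac G$ is $H$-free. For $G=\overline K$ with $K$ of girth greater than $g\ge3$, the graph $K$ is triangle-free, so the cliques of $K$ have size at most $2$. Hence
\[\tfrac12\abs{K}\le\chi(G)\le\abs{K},\qquad \omega(G)=\alpha(K),\]
where the lower bound holds because colour classes of $G$ are cliques of $K$. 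Since $K$ is triangle-free, $\abs K<R(3,\alpha(K)+1)$, so $\chi(G)< R(3,\omega(G)+1)$ and $\mac G$ is $\chi$-bounded. Finally, Erd\H{o}s' probabilistic construction gives graphs $K$ of girth greater than $g$ with $\alpha(K)\le \abs K^{1-\delta}$ for some $\delta=\delta(g)>0$. For these, $\chi(G)/\omega(G)\ge \tfrac12\abs K^{\delta}\to\infty$, so $\mac G$ is not linearly $\chi$-bounded.

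\emph{Case 2: $\overline H$ is a forest with an induced $P_5$.} Then $H$ contains an induced $\overline{P_5}$, the house. It therefore suffices to find a hereditary $\chi$-bounded class of house-free graphs that is not linearly $\chi$-bounded, or more generally a class that avoids induced $H$ for the particular $H$ at hand. My first attempt would be to mimic Case 1 with a different sparse family for the complements. I would take $G=\overline K$, where $K$ is drawn from a hereditary family $\mathcal K$ with the following properties. First, $K$ has no induced $\overline H$; it suffices that $K$ is $P_5$-free, since $\overline H$ contains an induced $P_5$. Second, the clique cover number $\theta(K)=\chi(G)$ is bounded by a function of $\alpha(K)=\omega(G)$. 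Third, $\theta(K)/\alpha(K)$ is unbounded on $\mathcal K$.

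Natural candidates are the Gy\'arf\'as-type Ramsey graphs with stability number at most $2$, or line graphs. Both fail, and the reasons point to where the difficulty lies. A $P_5$-free triangle-free graph is bipartite or a blow-up of $C_5$, so its stability number is at least a fifth of its order, and $\theta/\alpha$ stays bounded. For line graphs, $\theta$ is essentially a vertex cover and $\alpha$ a matching number, which are within a factor $2$ of each other. So I would look for a $P_5$-free family $\mathcal K$ containing large cliques, with $\theta(K)$ superlinear in $\alpha(K)$. A second option is to drop the complement formulation and look directly for a house-free $\chi$-bounded class of Ramsey type, for example graphs with $\alpha\le 2$ that are additionally house-free. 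The verification would then follow the same Ramsey comparison as in Case 1, using the bound $f(n)\ge \Omega(n^2/\log n)$ coming from Kim's lower bound on $R(3,n)$.

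I expect Case 2 to be the main obstacle. Case 1 is routine once the large-girth family is in hand. In Case 2, by contrast, the house-freeness (equivalently, $P_5$-freeness of the complement) has to coexist with superlinear growth of $\chi$ in $\omega$, and the obvious extremal constructions all acquire induced houses or collapse to linear behaviour.
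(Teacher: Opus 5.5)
Your Case~1 is correct and is the paper's argument in slightly different packaging. The paper takes Erd\H{o}s' graphs $G_n$ with no cycle of length at most $\abs C$ and $\alpha(G_n)\le n^{1-1/(2\abs C)}$. It then places $\overline{G_n}$ in the $H$-free subclass of the $\{3K_1,2K_2\}$-free class, which is $\chi$-bounded by the same Ramsey bound you use. Your class of complements of graphs of girth greater than $\abs H$ is simply a hereditary class that is $H$-free outright.

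\textbf{The gap is Case~2.} You do not resolve it, so the proposal proves only half of the statement. Neither of your suggested routes can be completed as stated.
\begin{itemize}
\item The second option is impossible. If $\alpha(G)\le 2$ and $G$ is house-free, then $\overline G$ is triangle-free and $P_5$-free. By your own observation, $\omega(G)=\alpha(\overline G)\ge\abs G/5\ge\chi(G)/5$, so that class is linearly $\chi$-bounded and Kim's bound never comes into play.
\item The first option only fails because you looked at the wrong $C_5$ blow-ups. The triangle-free examples you examined are one-level blow-ups of $C_5$ by stable sets, which is why you saw linear behaviour.
\end{itemize}

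\textbf{The missing idea} is the substitution closure $\mac G$ of the induced subgraphs of $C_5$. In particular, take the iterated blow-ups $C_5^{(k)}$, where $C_5^{(k)}$ is obtained from $C_5$ by substituting a copy of $C_5^{(k-1)}$ for each vertex.

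\emph{Not linearly $\chi$-bounded.} Clique number and stability number are multiplicative under this operation. Hence $\omega(C_5^{(k)})=\alpha(C_5^{(k)})=2^k$ while $\abs{C_5^{(k)}}=5^k$. So $\chi(C_5^{(k)})\ge(5/2)^k=(5/4)^k\,\omega(C_5^{(k)})$.

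\emph{House-free, hence $H$-free.} The house $\overline{P_5}$ is prime. Suppose a prime graph $P$ is an induced subgraph of the graph obtained by substituting $G_2$ for a vertex of $G_1$. Then $V(P)\cap V(G_2)$ is a homogeneous set of $P$. So either $P$ lies inside $G_2$, or $P$ is isomorphic to an induced subgraph of $G_1$. Inductively, the house would be an induced subgraph of $C_5$, which it is not.

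\emph{$\chi$-bounded.} $\mac G$ is even polynomially $\chi$-bounded, by the theorem of Chudnovsky, Penev, Scott, and Trotignon that substitution preserves (polynomial) $\chi$-boundedness. This is the paper's argument. An induction over the modular decomposition also gives an exponential bound, which suffices here.

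This is exactly the family your complement formulation was searching for. Since $\overline{C_5}\cong C_5$ and complementation commutes with substitution, $\overline{\mac G}=\mac G$. So you may take $\mac K=\mac G$: it is $P_5$-free, contains large cliques, and has $\theta(K)/\alpha(K)\ge(5/4)^k$ for $K=C_5^{(k)}$.
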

	\begin{proof}
		Suppose first that $\overline H$ contains an induced cycle $C$. For every sufficiently large $n$, a construction of Erd\H os~\cite{Erdos1959} gives an $n$-vertex graph $G_n$ with no cycle of length at most $\abs C$ and $\alpha(G_n)\le n^{1-1/(2\abs C)}$. In particular $G_n$ is $\{C_3,C_4,\overline H\}$-free; and so $\overline{G_n}$ is $\{3K_1,2K_2,H\}$-free, which yields $\omega(\overline{G_n})=\alpha(G_n)\le n^{1-1/(2\abs C)}\le\alpha(\overline{G_n})\chi(\overline{G_n})^{1-1/(2\abs C)}\le 2\chi(\overline{G_n})^{1-1/(2\abs C)}$. Thus the $\{3K_1,2K_2,H\}$-free class is not linearly $\chi$-bounded, a contradiction. Hence $\overline H$ is a forest.
		
		Now, suppose that $\overline H$ contains an induced $P_5$. Let $\mac G$ be the substitution closure of the induced subgraphs of the five-cycle $C_5$.
		Then $\mac G$ is polynomially $\chi$-bounded by a theorem of Chudnovsky, Penev, Scott, and Trotignon~\cite{CPST2013}; and every graph in $\mac G$ is $\overline{P_5}$-free (so $H$-free). 
		However, $\mac G$ contains the $k$-th blowup of $C_5$ for each $k\ge1$, which has clique number $2^k$, stability number $2^k$, and chromatic number at least $(5/2)^k=(5/4)^k\cdot2^k$. 
		Taking $k$ arbitrarily large implies that $\mac G$ is not linearly $\chi$-bounded, a contradiction. This proves \zcref{prop:forcelin}.
	\end{proof}
	
	It could be true that \zcref{prop:forcelin} is also sufficient; that is, if $\overline H$ is a $P_5$-free forest, then the $H$-free subclass of every $\chi$-bounded class is linearly $\chi$-bounded.
	If true, this would imply \zcref{conj:cocktailchi}.
	In addition to the $C_4$-free case (\zcref{conj:c4chi}), the other natural special instance of \zcref{conj:cocktailchi} is the $K_4^-$-free case (recall that $K_4^-$ is also known as the diamond in the literature).
	It would be interesting to decide whether the $K_4^-$-free subclass of every $\chi$-bounded class is linearly $\chi$-bounded; note that it is not even known if polynomial $\chi$-boundedness holds, as remarked in~\cite{CCDO2023}.
	
	\section*{Acknowledgments}
	This work was initiated while the first author visited the second author at the Discrete Mathematics Group, Institute for Basic Science (IBS DIMAG), and was partly carried out when both authors were participating in the 2026 MATRIX Workshop Global Structure and Geometry of Graphs (2026 GSGG). The first author would like to thank the second author and the participants of IBS DIMAG for their hospitality, and both authors would like to thank the organisers and members of 2026 GSGG for creating an encouraging working environment.
	
	For the purpose of open access, the authors have applied a CC BY public copyright licence to
	any author accepted manuscript arising from this submission.
	
	{\setstretch{1}
		\bibliographystyle{abbrv}

	}

	\appendix

	\section{Brooms}\label{sec:broom}
	
	In this appendix we give a proof of \zcref{thm:broomks}. We begin with:
	
	\begin{lemma}
		\label{lem:stablechi}
		Let $q\ge1$ and $s\ge2$ be integers. Then every graph $G$ with
		$$\chi(G)\ge {\alpha(G)\choose s}\cdot q+\alpha(G)^{s-1}\cdot R(s,\omega(G)+1)$$ 
		contains a stable set $I$ with $\abs I=s$ and $\chi(G[\bigcap_{v\in I}N_G(v)])>q$.
		In particular, for every integer $a\ge1$, if $\abs G\ge a{a\choose s}\cdot q+a^s\cdot R(s,\omega(G)+1)$ then either $\alpha(G)>a$ or $G$ contains a stable set $I$ with $\abs I=s$ and $\chi(G[\bigcap_{v\in I}N_G(v)])>q$.
	\end{lemma}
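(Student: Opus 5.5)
We argue by induction on $\alpha:=\alpha(G)$, proving the contrapositive: if every stable $s$-set $I$ has $\chi(G[\bigcap_{v\in I}N_G(v)])\le q$, then $\chi(G)<\binom{\alpha}{s}q+\alpha^{s-1}R$, where $R:=R(s,\omega(G)+1)$.

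A direct approach fixes a maximum stable set $S$ with $\abs S=\alpha$. Every vertex $x\notin S$ has a neighbour in $S$ by maximality. Partition $V(G)\setminus S$ according to how many vertices of $S$ a vertex is adjacent to. Vertices adjacent to at least $s$ vertices of $S$ lie in the common neighbourhood of some $s$-subset of $S$. There are $\binom{\alpha}{s}$ such subsets, so these vertices contribute at most $\binom{\alpha}{s}q$ colours. The remaining vertices are adjacent to between $1$ and $s-1$ vertices of $S$. We group them by their exact neighbourhood $U\subset S$ with $1\le\abs U\le s-1$, and need each such class $V_U$ to have small chromatic number.

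For $V_U$, note that $(S\setminus U)\cup I$ is stable for every stable $I\subset V_U$. Hence $\alpha(G[V_U])\le\abs U\le s-1$, so $G[V_U]$ has no stable set of size $s$ and $\abs{V_U}<R$, giving $\chi(G[V_U])\le R-1$. The number of classes is $\sum_{i=1}^{s-1}\binom{\alpha}{i}$, which we must compare with $\alpha^{s-1}$. This comparison is the main obstacle. Adding $S$ itself (one colour), we get $\chi(G)\le\binom{\alpha}{s}q+1+(R-1)\sum_{i<s}\binom{\alpha}{i}$, and we need $1+(R-1)\sum_{i=1}^{s-1}\binom{\alpha}{i}<\alpha^{s-1}R$, i.e.\ $\sum_{i=0}^{s-1}\binom{\alpha}{i}\le\alpha^{s-1}$ roughly. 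This fails for small $\alpha$ (for example $s=2$ gives $1+\alpha$ versus $\alpha$).

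To fix this, I will refine the count. Colour $S$ together with the vertices of a single class $V_{\{x\}}$ for some chosen $x$. Alternatively, observe that a vertex whose $S$-neighbourhood is exactly $\{x\}$ can be swapped with $x$, so $V_{\{x\}}\cup\{x\}$ still has no stable $s$-set when $s\ge2$; this absorbs $S$ into the singleton classes. Then the bound becomes $(R-1)\sum_{i=1}^{s-1}\binom{\alpha}{i}+\alpha$ roughly, with $\alpha$ singleton classes each of size less than $R$, and we need this below $\alpha^{s-1}R$. Here $\sum_{i=1}^{s-1}\binom{\alpha}{i}\le\alpha^{s-1}$ holds for $\alpha\ge1$ (it holds term-wise once $\alpha\ge s-1$, and when $\alpha<s$ the sum is $2^\alpha-1\le\alpha^{s-1}$). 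The counting therefore closes, up to checking the edge cases $\alpha<s$, where $\binom{\alpha}{s}=0$ and the statement says $\chi(G)\ge\alpha^{s-1}R$ is impossible. That case is immediate, since $\chi(G)\le\abs G<R$ when there is no stable $s$-set.

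For the "in particular" statement: if $\alpha(G)\le a$, then $\chi(G)\ge\abs G/\alpha(G)\ge\abs G/a\ge\binom{a}{s}q+a^{s-1}R$, which is at least the threshold with $\alpha(G)$ in place of $a$ by monotonicity. Applying the first part then gives the stable set $I$.
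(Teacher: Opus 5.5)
Your argument is essentially the paper's proof. It uses a maximum stable set $S$, $q$ colours for each of the $\binom{\alpha}{s}$ common neighbourhoods of $s$-subsets of $S$, and Ramsey bounds on the classes of vertices whose neighbourhood in $S$ is exactly some $J$ with $1\le\abs{J}<s$. It then absorbs $S$ into the singleton classes, uses the estimate $\sum_{j=1}^{s-1}\binom{\alpha}{j}\le\alpha^{s-1}$, and gets the second statement from $\chi(G)\ge\abs{G}/\alpha(G)$. The announced induction on $\alpha$ is never needed.

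Two bookkeeping remarks:
\begin{enumerate}
\item Your first count, with one extra colour for $S$, already works: it gives $1+(R-1)\alpha^{s-1}<R\alpha^{s-1}$ because $\alpha\ge s\ge2$ in the main case.
\item In the refined count, drop the stray ``$+\alpha$'', since each $V_{\{x\}}\cup\{x\}$ already has at most $R-1$ vertices. With the $+\alpha$ (or with the paper's bound $\abs{N_{\{v\}}\cup\{v\}}\le R$ as written), the case $s=2$ yields only $\chi(G)\le\binom{\alpha}{2}q+\alpha R$, which does not contradict the hypothesis.
\end{enumerate}
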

	\begin{proof}
		Let $R:=R(s,\omega(G)+1)$; and suppose for a contradiction that the conclusion does not hold.
		Let $S$ be a maximum stable set of $G$; then $\abs S\ge s$ since $\abs G\ge\chi(G)\ge R$, and every vertex in $G\setminus S$ has a neighbour in $S$.
		For every $I\subset S$ with $\abs I=s$, let $N_I:=\{v\in V(G)\setminus S:I\subset N_G(v)\}$; then $\chi(G[N_I])\le q$ by our supposition.
		For every $J\subset S$ with $1\le\abs J<s$, let $N_J:=\{v\in V(G)\setminus S:J=N_G(v)\cap S\}$; then $\alpha(G[N_J])\le\abs J<s$ (by the maximality of $S$) and $\omega(G[N_J])<\omega(G)$, which yields $\abs{N_J}<R(s,\omega(G))\le R$.
		In particular $\abs{N_{\{v\}}\cup\{v\}}\le R$ for all $v\in S$. Therefore, since $s\ge2$ and $V(G)\setminus S=\bigcup_{J\subset S,1\le\abs J\le s}N_J$, we obtain
		\begin{align*}
			\chi(G)&\le\sum_{I\subset S,\abs I=s}\chi(G[N_I])+\sum_{J\subset S,2\le\abs J<s}\abs{N_J}+\sum_{v\in S}\,\abs{N_{\{v\}}\cup\{v\}}\\
			&\le {\abs S\choose s}\cdot q+\sum_{j=2}^{s-1}{\abs S\choose j}\cdot R+\abs S\cdot R
			\le{\abs S\choose s}\cdot q+\abs S^{s-1}\cdot R,
		\end{align*}
		a contradiction. This proves the first statement.
		
		Now, observe that if $\alpha(G)\le a$ then $\chi(G)\ge\abs G/\alpha(G)\ge {\alpha(G)\choose s}\cdot q+\alpha(G)^{s-1}\cdot R(s,\omega(G)+1)$.
		This proves the second statement.
	\end{proof}
	
	We are now ready to prove \zcref{thm:broomks}, as follows.
	
	\begin{proof}
		[Proof of \zcref{thm:broomks}]
		Suppose not.
		Let $\omega:=\omega(G)$,
		$R:=R(s,\omega+1)$,
		and $B:=\ell{\ell\choose s}\cdot q+\ell^s\cdot R$.
		Let
		$a:=s(B-1)$ and $r:=\frac{q+3a}{s-1}$, 
		and define
		\[
		f(1):=\frac1s(\chi(G)-q-2a),\qquad f(i):=\frac1s(f(i-1)-(q+3a))\quad\text{for all }i\ge2.
		\]
		Then
		$f(i)=s^{1-i}(f(1)+r)-r$ for all $i\ge1$.
		
		\begin{clm}
			{\ref*{thm:broomks}.1}
			\label{claim:checkbroomks}
			$f(k-1)\ge 4a$.
		\end{clm}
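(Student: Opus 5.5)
This is a direct computation mirroring \zcref{claim:check2}, using the hypothesis $\chi(G)\ge 7s^{k}\ell^{s+1}(q+R)$. I will use the closed form $f(k-1)=s^{2-k}(f(1)+r)-r$. As in \zcref{claim:check2}, I first check that $f(1)+r>\frac1s\chi(G)$. Indeed,
\[f(1)+r=\tfrac1s\chi(G)+(q+2a)\bigl(\tfrac1{s-1}-\tfrac1s\bigr)+\tfrac{a}{s-1},\]
and the last two terms are positive. Hence
\[f(k-1)>s^{1-k}\chi(G)-r\ge s^{1-k}\chi(G)-(q+3a),\]
using $s-1\ge1$. So it suffices to show $s^{1-k}\chi(G)\ge q+7a$.

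The hypothesis gives
\[s^{1-k}\chi(G)\ge 7s\ell^{s+1}(q+R).\]
On the other side, $a=s(B-1)<s\bigl(\ell\binom{\ell}{s}q+\ell^sR\bigr)$, so
\[q+7a< q+7s\ell\tbinom{\ell}{s}q+7s\ell^sR.\]
I will now compare the two bounds term by term.

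First, the $R$ terms: $7s\ell^sR\le 7s\ell^{s+1}R$, since $\ell\ge1$.

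Second, the $q$ terms: I need $q+7s\ell\binom{\ell}{s}q\le 7s\ell^{s+1}q$. Since $\binom{\ell}{s}\le\ell^s/s!\le\ell^s/2$, the left side is at most $q+\frac72 s\ell^{s+1}q$. This is at most $7s\ell^{s+1}q$, because the slack $\frac72 s\ell^{s+1}q\ge 7q\ge q$.

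Summing the two comparisons gives $q+7a< s^{1-k}\chi(G)$, and therefore $f(k-1)>4a$.

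The one point needing care is that $\binom{\ell}{s}$ may be zero when $\ell<s$. In that case $a=s(\ell^sR-1)$, the $q$-term comparison is trivial, and the same chain goes through. No step is a real obstacle; the only thing to verify is that the constant $7$ absorbs the $3a$ coming from $r$ together with the required $4a$, which the $\frac72$ slack above confirms.
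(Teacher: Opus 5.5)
Your proof is correct and follows the paper's argument: the closed form $f(k-1)=s^{2-k}(f(1)+r)-r$, the bound $f(1)+r\ge\frac1s\chi(G)$, and $r\le q+3a$, which together reduce the claim to $s^{1-k}\chi(G)\ge q+7a$. The paper only says this last inequality holds by the hypothesis, and your term-by-term comparison using $\binom{\ell}{s}\le\ell^s/2$ supplies that omitted arithmetic correctly.
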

		\begin{subproof}
			Observe that
			\(
			f(1)+r=\frac1s(\chi(G)-(q+2a))+\frac{q+3a}{s-1}
			= \frac1s \chi(G)
			+ \frac{q+3a}{s(s-1)}
			+ \frac{a}{s}
			\ge \frac1s\chi(G)
			\),
			and so
			\[
			f(k-1)=s^{2-k}(f(1)+r)-r\ge s^{1-k}\chi(G)-\frac{q+3a}{s-1}\ge s^{1-k}\chi(G)-(q+3a)\ge 4a
			\]
			where the last inequality holds by the hypothesis.
			This proves \zcref{claim:checkbroomks}.
		\end{subproof}
		
		Now, \zcref{thm:kappachi} gives an $(a+1)$-connected induced subgraph $F$ of $G$ with
		\(
		\chi(F)\ge \chi(G)-2a+1\ge B
		\).
		Thus $F$ contains a stable set $S$ of size $s$. By our supposition,
		\(
		\chi\left(G\left[\bigcap_{v\in S}N_F(v)\right]\right)\le q
		\).
		Hence
		\(
		\sum_{v\in S}\chi(F\setminus N_F(v))\ge \chi\left(F\setminus \bigcap_{v\in S}N_F(v)\right)\ge \chi(F)-q>\chi(G)-q-2a
		\),
		and so there exists $v\in S$ such that
		\(
		\chi(F\setminus N_F(v))>\frac1s(\chi(G)-q-2a)=f(1)
		\).
		Since $F$ is $(a+1)$-connected, $v$ has more than $a=s(B-1)\ge B$ neighbours in $V(F)$, and $F\setminus v$ is $a$-connected.
		
		Thus there exists an integer $p\ge1$ maximal such that $G$ contains an induced path $v_1\text-v_2\text-\cdots\text-v_p$ and a $B$-connected induced subgraph $J$ of $G\setminus\{v_1,\ldots,v_p\}$ such that:
		\begin{itemize}
			\item $\{v_1,\ldots,v_{p-1}\}$ is anticomplete to $V(J)$,
			\item $v_p$ has at least $B$ neighbours in $V(J)$, and
			\item $\chi(J\setminus N_G(v_p))>f(p)$.
		\end{itemize}
		
		Since $\omega(G[V(J)\cap N_G(v_p)])\le \omega$ and $\abs{V(J)\cap N_G(v_p)}\ge B$, \zcref{lem:stablechi} applied to $G[V(J)\cap N_G(v_p)]$ yields a stable set of size $\ell$. Since $G$ is $(k,\ell)$-broom-free, we have $p\le k-1$, and so \zcref{claim:checkbroomks} yields
		\[
		\chi(J\setminus N_G(v_p))>f(p)\ge f(k-1)\ge 4a.
		\]
		Thus \zcref{thm:kappachi} gives an $(a+1)$-connected induced subgraph $L$ of $J\setminus N_G(v_p)$ with
		\(
		\chi(L)\ge \chi(J\setminus N_G(v_p))-2a+1>f(p)-2a\ge f(k-1)-2a
		\).
		Let
		\begin{align*}
			b&:=\frac1s(f(p)-q-2a)=\frac1s(f(p)-(q+3a))+B-1=f(p+1)+B-1,\quad\text{and}\\
			Z&:=\{z\in V(J):\chi(L\setminus N_G(z))\le b\}.
		\end{align*}
		
		\begin{clm}
			{\ref*{thm:broomks}.2}
			\label{claim:mainbroomks}
			$\abs Z<R$.
		\end{clm}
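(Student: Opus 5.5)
We mirror the proof of \zcref{claim:main2}. Suppose for a contradiction that $\abs Z\ge R$. Since $R=R(s,\omega+1)$ and $G[Z]$ has no clique of size $\omega+1$, Ramsey's theorem gives a stable set $S\subset Z$ with $\abs S=s$.

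By the definition of $Z$, each $z\in S$ satisfies $\chi(L\setminus N_G(z))\le b$. The vertices of $L$ lying outside $\bigcap_{z\in S}N_G(z)$ are covered by the $s$ sets $V(L)\setminus N_G(z)$ with $z\in S$. Subadditivity of the chromatic number therefore gives
\[\chi\Bigl(L\setminus \bigcap_{z\in S}N_G(z)\Bigr)\le bs.\]
We also have $\chi(L)>f(p)-2a$. Subtracting, and using $bs=f(p)-q-2a$, we obtain
\[\chi\Bigl(L\Bigl[V(L)\cap\bigcap_{z\in S}N_G(z)\Bigr]\Bigr)\ge \chi(L)-bs>f(p)-2a-(f(p)-q-2a)=q.\]
Since $L$ is an induced subgraph of $G$, this subgraph is an induced subgraph of $G[\bigcap_{z\in S}N_G(z)]$. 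Hence the common neighbourhood of the stable set $S$ has chromatic number greater than $q$, contradicting our standing supposition that \zcref{thm:broomks} fails. This proves \zcref{claim:mainbroomks}.

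No step is a genuine obstacle. The points to check are the identity $bs=f(p)-q-2a$, which is immediate from the definition of $b$, and that the argument needs only $\abs Z\ge R$ to extract the stable set; the larger connectivity threshold $B$ plays no role here. Note that $Z\subset V(J)$ may meet $L$. This does no harm: a vertex $z\in S\cap V(L)$ is not in $N_G(z)$, so it lies in $L\setminus N_G(z)$ and is already counted in the bound $b$.
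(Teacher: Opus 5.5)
Your proof is correct and takes the same route as the paper. You extract a stable $s$-set $S\subset Z$ by Ramsey's theorem, bound $\chi(L\setminus\bigcap_{z\in S}N_G(z))\le bs$ by subadditivity, and then combine $\chi(L)>f(p)-2a$ with $bs=f(p)-q-2a$ to give the common neighbourhood chromatic number exceeding $q$; your remark about $S$ possibly meeting $V(L)$ is a correct detail that the paper leaves implicit.
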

		\begin{subproof}
			Suppose that $\abs Z\ge R$. Then $Z$ contains a stable set $S$ of size $s$. Thus
			\(
			\chi\left(L\setminus \bigcap_{v\in S}N_G(v)\right)\le bs
			\),
			which implies
			\(
			\chi\left(L\left[V(L)\cap \bigcap_{v\in S}N_G(v)\right]\right)\ge \chi(L)-bs>f(p)-2a-bs=q
			\),
			contrary to the hypothesis. This proves \zcref{claim:mainbroomks}.
		\end{subproof}
		
		Since $J$ is $B$-connected, $\abs{V(J)\cap N_G(v_p)}\ge B$, and $\chi(L)>2a\ge B\ge R$, \zcref{claim:mainbroomks} gives a path $P$ in~$J$ between $V(J)\cap N_G(v_p)$ and $V(L)$ such that $V(P)\cap Z=\emptyset$.
		We may assume that $P$ is a shortest such path, and let
		\(
		P=v_{p+1}\text-\cdots\text-v_{p'}\text-v_{p'+1}
		\)
		where $p'>p$. Since $L$ is $(a+1)$-connected, $v_{p'+1}$ has degree at least $a+1$ in $L$.
		
		If $v_{p'}$ has fewer than $B$ neighbours in $V(L)$, then
		\(
		J':=L\setminus N_G(v_{p'})
		\)
		is $((a+1)-(B-1))$-connected. Since $a=s(B-1)$ and $s\ge2$, we have $(a+1)-(B-1)\ge B$. 
		Also, $v_{p'+1}$ has at least $(a+1)-(B-1)\ge B$ neighbours in $J'$.
		Moreover, since $v_{p'+1}\notin Z$, we have
		\[
		\chi(J'\setminus N_G(v_{p'+1}))\ge \chi(L\setminus N_G(v_{p'+1}))-B+1>b-B+1=f(p+1)\ge f(p'+1),
		\]
		and so $v_1\text-\cdots\text-v_p\text-v_{p+1}\text-\cdots\text-v_{p'+1}$ and $J'$ contradict the maximality of $p$.
		
		If $v_{p'}$ has at least $B$ neighbours in $V(L)$, then 
		\(
		\chi(L\setminus N_G(v_{p'}))
		>b \ge f(p+1)\ge f(p')
		\)
		because $v_{p'}\notin Z$
		and so 		
		$v_1\text-\cdots\text-v_p\text-v_{p+1}\text-\cdots\text-v_{p'}$ and $J':=L$ violate the maximality of $p$.
		This proves \zcref{thm:broomks}.
	\end{proof}
\end{document}